\newcommand{\R}{\mathbb R}
\newcommand{\N}{\mathbb N}
\newtheorem{theorem}{Theorem}[section]
\newtheorem{corollary}{Corollary}[section]
\newtheorem{proposition}{Proposition}[section]
\newtheorem{definition}{Definition}[section]
\newtheorem{remark}{Remark}[section]
\newtheorem{lemma}{Lemma}[section]
\newtheorem{example}{Example}[section]
\newcommand{\ds}{\displaystyle}
\renewcommand{\epsilon}{\varepsilon}
\renewcommand{\phi}{\varphi}
\newcommand{\fs}{\footnotesize}%%%%%
\newlength{\captionwidth}
\long\def\@makecaption#1#2{%
   \vskip 10\p@
   \setbox\@tempboxa\hbox{#1: #2}%
   \ifdim \wd\@tempboxa > \captionwidth %\hsize
       \hbox to\hsize{\hfil
       \parbox[t]{\captionwidth}{
       \small#1: \small#2\par}
       \hfil}
     \else
       \hbox to\hsize{\hfil\box\@tempboxa\hfil}%
   \fi}
\tikzset{
  % style to apply some styles to each segment of a path
  on each segment/.style={
    decorate,
    decoration={
      show path construction,
      moveto code={},
      lineto code={
        \path [#1]
        (\tikzinputsegmentfirst) -- (\tikzinputsegmentlast);
      },
      curveto code={
        \path [#1] (\tikzinputsegmentfirst)
        .. controls
        (\tikzinputsegmentsupporta) and (\tikzinputsegmentsupportb)
        ..
        (\tikzinputsegmentlast);
      },
      closepath code={
        \path [#1]
        (\tikzinputsegmentfirst) -- (\tikzinputsegmentlast);
      },
    },
  },
  % style to add an arrow in the middle of a path
  mid arrow/.style={postaction={decorate,decoration={
        markings,
        mark=at position .6 with {\arrow[#1]{stealth}}
      }}},
}
\begin{document}

\title{Diffusion-convection reaction equations
with sign-changing diffusivity and bistable reaction term}

\author{
Diego Berti\footnote{Department of Sciences and Methods for Engineering, University of Modena and Reggio Emilia, Italy}
\and
Andrea Corli\footnote{Department of Mathematics and Computer Science, University of Ferrara, Italy}
\and
Luisa Malaguti\footnotemark[1]
}

%\author{Andrea Corli\\
%\small\textit{Department of Mathematics and Computer Science, University of Ferrara}\\
%\small\textit{I-44121 Italy, e-mail: andrea.corli@unife.it},
%\and Luisa Malaguti\\
%\small\textit{Department of Sciences and Methods for Engineering, University of Modena and Reggio Emilia}\\
%\small\textit{I-42122 Italy, e-mail: luisa.malaguti@unimore.it}}

%\date{}

\maketitle
\begin{abstract}
We consider a reaction-diffusion equation with a convection term in one space variable, where the diffusion changes sign from the positive to the negative and the reaction term is bistable. We study the existence of wavefront solutions, their uniqueness and regularity. The presence of convection reveals several new features of wavefronts: according to the mutual positions of the diffusivity and reaction, profiles can occur either for a single value of the speed or for a bounded interval of such values; uniqueness (up to shifts) is lost; moreover, plateaus of arbitrary length can appear; profiles can be singular where the diffusion vanishes.

\vspace{1cm}
\noindent \textbf{AMS Subject Classification:} 35K65; 35C07, 34B40, 35K57

\smallskip
\noindent
\textbf{Keywords:} Sign-changing diffusivity, bistable reaction term, diffusion-convection reaction equations,  traveling-wave solutions, sharp profiles.
\end{abstract}

\section{Introduction}\label{s:I}

In this paper we deal with the parabolic equation
\begin{equation}\label{e:E}
\rho_t + f(\rho)_x=\left(D(\rho)\rho_x\right)_x + g(\rho), \qquad t\ge 0, \, x\in \R.
\end{equation}
Equation \eqref{e:E} arises as a simple model in several applications where a reaction (modeled by $g$) is propagated in the space both in a convective and in a diffusive way; the latter are modeled by $f$ and $D$, respectively. In these applications the function $\rho$ usually stands for a concentration or a density and takes value in a bounded interval; without loss of generality we assume that such an interval is $[0,1]$.
An interesting feature is that the diffusivity $D$ can be {\em negative}. This occurs, for instance, in the modeling of collective movements \cite{Nelson_2000, Whitham}, in oil recovery \cite{DJLW}, in thermodynamics \cite{Kerner-Osipov}, and in biology \cite{HPO, Turchin}.

We are interested in {\em traveling-wave} solutions (TWs for short) to \eqref{e:E}, i.e., in solutions to \eqref{e:E} of the form $\rho(t,x)= \phi(x-ct)$. The function $\phi=\phi(\xi)$ is the {\em profile} of the TW and the real number $c$ is its {\em speed}. The equation for the profile is then
\begin{equation}\label{e:ODE}
\left(D(\phi)\phi^{\prime}\right)^{\prime}+\left(c -h(\phi)\right)\phi' + g(\phi)=0,
\end{equation}
where ${'}$ stands for the derivative with respect to $\xi$; we denote the derivative of $f$ with respect to $\rho$ as $h(\rho)=\dot f(\rho)$. We refer to \cite{GK} for more information on traveling waves (see also \cite{Bonheure}).

The current paper carries on the analysis that we began in \cite{BCM1, BCM2}; the aim of these researches is to provide a general treatment of traveling-wave solutions to equations with sign-changing diffusivities, with source terms that possibly change sign, and in presence of convective terms. They were motivated by modeling of collective movements, see \cite{CM-ZAMP}, and in biology \cite{Maini-Malaguti-Marcelli-Matucci2006, Maini-Malaguti-Marcelli-Matucci2007}.

More precisely, in \cite{BCM1} we considered the case when both $D>0$ and $g>0$ in $(0,1)$, and $g$ vanishes either at $0$ or at both extrema; the latter is the so-called {\em monostable} case. The aim of \cite{BCM1} was to complete or improve previous results obtained in \cite{CdRM, CM-DPDE, MMconv} as well as to lay a background for the subsequent papers, in particular as far as non-uniqueness of solutions is concerned. The statement of a typical result in \cite{BCM1} is that profiles exist if and only if $c\ge c^*$, for a real threshold $c^*\in\R$.

The case when the diffusivity $D$ changes sign has been studied in \cite{BCM2} for a monostable reaction term $g$, see also \cite{Bao-Zhou2014, Bao-Zhou2017, Maini-Malaguti-Marcelli-Matucci2006}; equation \eqref{e:E} becomes then a forward-backward parabolic equation. There, we extended previous results, obtained only in the case $f=0$, and emphasized the much richer dynamics provided by the presence of the convective term $f$.

The aim of this paper is to allow also $g$ to change sign, and namely from the negative to the positive; this is the {\em bistable} (or Allen-Cahn) case.

Here follow our assumptions, see Figure \ref{f:fDpn-a>g}. On the convective term $f$ we assume
\begin{itemize}
\item[{(f)}]\, $f\in C^1[0,1]$, $f(0)=0$.
\end{itemize}
Clearly, the condition $f(0)=0$ just fixes a representative, because the convection is only defined up to an additive constant. The diffusivity $D$ satisfies, for some $\alpha\in (0,1)$:
\begin{itemize}
\item[(D)] \, $D\in C^1[0,1]$, $D>0$ in $(0,\alpha)$ and $D<0$ in $(\alpha,1)$.
\end{itemize}
Notice, condition (D) leaves open the possibility for $D$ to vanish at $0$ or $1$.
On the reaction term $g$ we assume, for some $\gamma \in(0,1)$,
\begin{itemize}
\item[{(g)}]\, $g\in C^0[0,1]$, $g<0$ in $(0,\gamma)$, $g>0$ in $(\gamma,1)$ and $g(0)=g(\gamma)=g(1)=0$.
\end{itemize}

%%%%%%%%%%%%%%%%-
\begin{figure}[htb]
\begin{center}

\begin{tikzpicture}[>=stealth, scale=0.41]
%axes
\draw[->] (0,0) --  (10,0) node[below]{$\rho$} coordinate (x axis);
\draw[->] (0,0) -- (0,3) node[left]{$D,g$} coordinate (y axis);
\draw (0,0) -- (0,-2);
\draw[thick] (0,1) .. controls (2,3) and (4,3) .. (6,0) node[above=3]{\footnotesize{$\alpha$}} node[midway, above]{$D$};
\draw[thick] (6,0) .. controls (7.1,-2) and (8,-2) .. (9,-1); %D
\draw[dotted] (9,-1)--(9,0);
\filldraw[black] (6,0) circle (2pt);
\draw[thick,dashed] (0,0) .. controls (1,-2) and (1.9,-2) .. (3,0) node[above=3]{\footnotesize{$\gamma$}};
\draw[thick,dashed] (3,0) .. controls (5,3) and (7,3) .. (9,0) node[above]{\footnotesize{$1$}} node[midway,above]{$g$}; %g
\filldraw[black] (3,0) circle (2pt);
\draw(5,-2.5) node[below]{$\alpha>\gamma$};

\begin{scope}[xshift=12cm]
%axes
\draw[->] (0,0) --  (10,0) node[below]{$\rho$} coordinate (x axis);
\draw[->] (0,0) -- (0,3) node[left]{$D,g$} coordinate (y axis);
\draw (0,0) -- (0,-2);
\draw[thick] (0,1) .. controls (2,3) and (4,3) .. (6,0) node[above=3, left=2]{\footnotesize{$\alpha=\gamma$}} node[midway, above]{$D$};
\draw[thick] (6,0) .. controls (7.1,-2) and (8,-2) .. (9,-1); %D
\draw[dotted] (9,-1)--(9,0);
\draw[thick,dashed] (0,0) .. controls (2,-3) and (4,-3) .. (6,0) ;
\draw[thick,dashed] (6,0) .. controls (7.1,2) and (8,2) .. (9,0) node[above]{\footnotesize{$1$}} node[midway,above]{$g$}; %g
\filldraw[black] (6,0) circle (2pt);
\draw(5,-2.5) node[below]{$\alpha=\gamma$};
\end{scope}

\begin{scope}[xshift=24cm]
%axes
\draw[->] (0,0) --  (10,0) node[below]{$\rho$} coordinate (x axis);
\draw[->] (0,0) -- (0,3) node[left]{$D,g$} coordinate (y axis);
\draw (0,0) -- (0,-2);
\draw[thick,dashed] (0,0) .. controls (2,-3) and (4,-3) .. (6,0) node[above=3]{\footnotesize{$\gamma$}} node[midway, above]{$g$};
\draw[thick,dashed] (6,0) .. controls (7.1,2) and (8,2) .. (9,0) node[above]{\footnotesize{$1$}}; %g
\filldraw[black] (6,0) circle (2pt);
\draw[thick] (0,1) .. controls (1,2) and (1.9,2) .. (3,0) node[above=3]{\footnotesize{$\alpha$}};
\draw[thick] (3,0) .. controls (5,-3) and (7,-3) .. (9,-1) node[midway,above]{$D$}; \filldraw[black] (3,0) circle (2pt);
\draw[dotted] (9,-1)--(9,0);
\draw(5,-2.5) node[below]{$\alpha<\gamma$};
\end{scope}
\end{tikzpicture}
\end{center}
\caption{\label{f:fDpn-a>g}{The mutual behaviours of the functions $D$ (solid line) and $g$ (dashed line).}}
\end{figure}
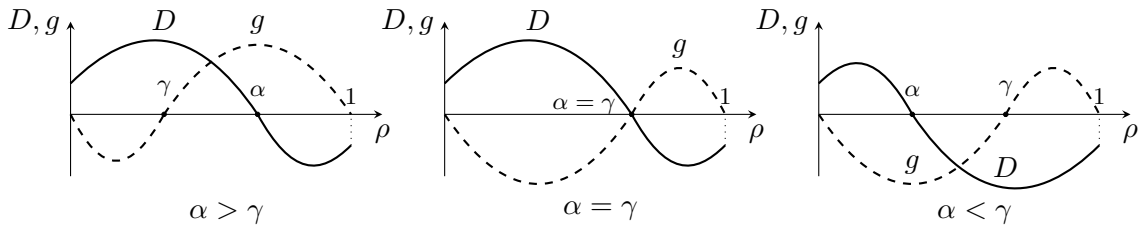
%%%%%%%%%%%%%%%%%%%%%%%%%%%%%%%%%%%%%%%%%%%%%%%%%%%%%

We focus on {\em wavefronts}, i.e., traveling-wave solutions whose profiles are globally-defined, noncostant and monotone. To fix ideas we deal with {\em non-increasing} profiles and this leads, because of (g), to require
\begin{equation}\label{e:infty}
\phi(-\infty)=1, \qquad \phi(\infty)=0.
\end{equation}
The study of non-decreasing profiles can be easily deduced.

\smallskip

The first results about solutions to problem \eqref{e:ODE}-\eqref{e:infty} in the bistable case were provided, in the case $f=0$ and $D=1$, in \cite{Aronson-Weinberger, Fife, Fife-McLeod, Kanel}: profiles were proved to exist only in correspondence of a unique value $c=c^*$, and the same result holds if $f$ is linear. The case when $f$ is nonlinear and $D=D(\rho)>0$ was studied in \cite{MMM2004, Tegner}, see also \cite[Theorem 10.36]{GK} when $D=D(\rho)\ge0$ and $f=0$. The case when $g$ may vanish more than three times was studied in \cite{Fife-McLeod, Kanel}. Further updated information about the bistable case can be found in \cite{Hamel}.The case when the diffusivity depends on $\rho_x$ is studied in \cite{Audrito}.

In the case $D$ changes sign, previous results were obtained in \cite{Maini-Malaguti-Marcelli-Matucci2007}, for the case when $f=0$ and $D$ satisfies (D), and \cite{Kuzmin-Ruggerini}, where $D$ changes sign twice (from the positive to the negative and again to the positive).
More precisely, in \cite{Maini-Malaguti-Marcelli-Matucci2007} the authors show that the conditions
\begin{equation}\label{e:gammaalfa}
\alpha>\gamma, \quad c>0,  \quad \hbox{ and } \quad \int_0^\alpha D(\rho)g(\rho)\, d\rho>0,
\end{equation}
are needed in order to have solutions. Under a further condition on the propagation speeds of the profiles connecting $\alpha$ to $0$ and $\alpha$ to $1$, they prove that there is a unique admissible speed $c^*$, which is strictly positive, and characterize the shapes of profiles according to the vanishings of $D$ in $0$ and $1$. The results in \cite{Kuzmin-Ruggerini} are analogous.

\smallskip

%More precisely, if we denote $z(\phi):=D(\phi)\phi'$, where $\phi'$ is computed at $\phi^{-1}(\phi)$ (notice that the inverse function of $\phi$ exists because of the monotony of $\phi$), then we are led to consider the problems
%\begin{equation}
%\label{e:zIntro}
%\begin{cases}
%\dot{z}(\varphi)=h(\varphi)-c-\frac{\left(D g \right)(\varphi)}{z(\varphi)}, \ &\varphi\in (0,\alpha),\\
%z(\varphi) < 0 , \ &\varphi \in (0,\alpha),\\
%z(0)=0,
%\end{cases}
%\hbox{ or }
%\begin{cases}
%\dot{z}(\varphi)=h(\varphi)-c-\frac{\left(D g \right)(\varphi)}{z(\varphi)}, \ &\varphi\in (\alpha,1),\\
%z(\varphi) > 0 , \ &\varphi \in (\alpha,1),\\
%z(1)=0.
%\end{cases}
%\end{equation}
%and similar problems in the case $(\rm D_{np})$.

Our main results are contained in Theorem \ref{th:multiplicity}. We show that Equation \eqref{e:E} can have wavefronts for every $\alpha, \gamma \in (0,1)$, no matter the sign of $\alpha-\gamma$. If $\alpha>\gamma$ what we prove has still the flavor of \cite{Kuzmin-Ruggerini, Maini-Malaguti-Marcelli-Matucci2007}: profiles exist at most for a unique $c^*$ and the profile is uniquely determined up to space shifts (see Theorem \ref{th:multiplicity} $(a)$). If $\alpha\le \gamma$ our results are instead completely new. In this case, we show that if solutions exist, then this occurs for speeds ranging in a whole (bounded) interval (possibly degenerating to a single value). About the uniqueness, we prove what follows. In the case $\alpha=\gamma$, for every admissible speed $c$, the corresponding profile is here uniquely determined not only modulo space shifts but also modulo {\em plateaus} at level $\gamma$ (see Theorem \ref{th:multiplicity} $(b)$). Plateaus are stretches in $(0,1)$ where the profile is constant; they can be of arbitrary length. This phenomenon, which was already observed in an analogous situation (see \cite{CM-ZAMP}), is impossible if $\alpha>\gamma$. When $\alpha<\gamma$, the loss of uniqueness of the profiles associated to a same speed is way more dramatic. We show that if the interval of admissible speeds has a non-empty interior part, then to every speed therein corresponds a one-parameter family of (essentially different) profiles. Each of these solutions is  uniquely determined (up to space shifts) by the value of its derivative when the value of the profile itself reaches $\gamma$ (see Theorem \ref{th:multiplicity} $(c)$). Moreover, under assumptions on the steepness of $g$ at $\gamma$, we prove that to each admissible speed there correspond also profiles with plateaus at level $\gamma$. Theorem \ref{th:multiplicity} also shows that the derivative of a profile is strictly negative, with the possible exceptions of the points where $\phi$ reaches the values $0,\alpha,\gamma,1$.
%
% A first novelty is that solutions in the case $\alpha \le \gamma$ {\em cannot} be excluded a priori, in drastic contrast with what happened in \cite{Maini-Malaguti-Marcelli-Matucci2007, Kuzmin-Ruggerini}. We highlight that this is a completely new phenomenon. Moreover, Theorem \ref{th:multiplicity} also regards the {\em multiplicity} of solutions, meant either on admissible speeds (speeds for which profiles exist) and on profiles associated to the same speed. We show that, if $\alpha>\gamma$ then our case still agrees with \cite{Maini-Malaguti-Marcelli-Matucci2007}. If profiles exist, this occurs for a unique $c^*$ and the profile is unique up to space shifts. Instead, in the case $\alpha \le \gamma$ a large class of solutions is allowed. We prove that there can exist a bounded interval formed by admissible speeds. Moreover, we show that in case $\alpha<\gamma$ it can happen that for some $c$ correspond a one-parameter family of (different) profiles, each of them uniquely determined (up to space shifts) by the corresponding diffusivity flow at $\gamma$ (see Theorem \ref{th:multiplicity} $(c)$).

%Another important phenomenon concerns the formation of {\em plateaus}, i.e., stretches in $(0,1)$ where the profile is constant. This can only happen where $g$ vanishes. We show that this is never the case if $\alpha>\gamma$, according to what observed in the case of $f=0$; on the contrary, plateaus occur always if $\alpha=\gamma$ (see \cite{CM-ZAMP} for an analogous situation). The case $\alpha<\gamma$ is more controversial. Profiles {\em can} have plateaus or do {\em not}.

We also obtain results that link proper convexity assumptions on $f$ to the occurrence of solutions. In particular, in Theorem \ref{thm:f convex} we show that if $f$ is convex then the behavior is still that observed in \cite{Kuzmin-Ruggerini, Maini-Malaguti-Marcelli-Matucci2007}: solutions exist only if \eqref{e:gammaalfa} holds but with $c>h(\alpha)$ replacing its second condition. Thus, in particular, profiles exist only if $\alpha>\gamma$. Moreover, in this case, we give a sufficient condition on the integral of the product $Dg$ to have  solutions.

In Theorem \ref{th:sufficient conditions}, we provide specific geometric sufficient conditions (again only on the terms $D$, $g$ and $f$) under which Equation \eqref{e:E} has wavefronts. Roughly speaking, these conditions are satisfied if $f$ is sufficiently strictly concave and $D,g$ satisfy some regularity conditions.

%Finally, in Proposition \ref{prop:reg alpha} we provide (when $\alpha>\gamma$) the explicit computation of the slope of a profile reaching $\alpha$. This reveals the appearance of profiles with infinity slope at points where only $D$ degenerates. (see also Corollary \ref{cor:slope alpha}). We observe that when $f=0$ the lack of regularity of profiles are not possible, according also to \cite{Maini-Malaguti-Marcelli-Matucci2007, Kuzmin-Ruggerini}.

The main mathematical tool to study \eqref{e:ODE} with the boundary conditions \eqref{e:infty} is a reduction (in regions where $D$ has constant sign) of Equation \eqref{e:ODE} to singular first-order systems \cite{Fife}, see Section \ref{s:reduction1}. In order to obtain the results for profiles, our approach relies on refined upper- and lower-solutions based on comparison-type techniques  for the first-order problems.
\smallskip

Here follows the content of the paper. In Section \ref{sec:new} we give the basic definitions and state the main outcomes on profiles. Section \ref{s:reduction1} takes care of proving the link between profiles and singular first-order systems. In Section \ref{s:first order}, we give those technical results  for the first-order problems which are necessary to deal with our main focus. In Section \ref{s:proofs}, we mainly provide the proofs of the results of Section \ref{sec:new}, based on the tools in Sections \ref{s:reduction1} and \ref{s:first order}. In Section \ref{s:examples} we show some relevant explicit examples. Some results about the behavior of a profile at points where it reaches the value $\alpha$ are provided in Appendix \ref{s:appendix}.

%\smallskip
%
%By a suitable matching of the profiles considered here, the results of this paper can be easily extended to the case
%\begin{itemize}
%\item[{$(\rm D_{np})$}] \, $D\in C^1[0,1]$, $D<0$ in $(0,\beta)$ and $D>0$ in $(\beta,1)$;
%\end{itemize}
%as well to the two further cases considered in \cite{BCM2} in the case $g\ge0$, namely,
%\begin{itemize}
%\item[{$(\rm D_{pnp})$}] \, $D\in C^1[0,1]$, $D>0$ in $(0,\alpha)\cup (\beta,1)$ and $D<0$ in $(\alpha,\beta)$, with $\alpha<\beta$,
%
%\item[{$(\rm D_{npn})$}] \, $D\in C^1[0,1]$, $D<0$ in $(0,\beta)\cup (\alpha,1)$ and $D>0$ in $(\beta, \alpha)$, with $\alpha>\beta$.
%\end{itemize}

%%%%%%%%%%%%%%%%%%%%%%%%%%%%%%%%%%%%%%%%%%%%%%%%
\section{Main results}
\label{sec:new}
\setcounter{equation}{0}

Traveling-waves can fail to be of class $C^1$. The following definition makes precise what we mean by a TWs, see \cite{GK}.

\begin{definition}\label{d:tws} Assume $f,D,g\in C^0[0,1]$ and let $I\subset\R$ be an open interval. Let $\varphi\in C(I)$ be a function valued in $[0,1]$, which is differentiable a.e. and such that $D(\varphi) \varphi^{\, \prime}\in L_{\rm loc}^1(I)$; let $c$ be a real constant.

Then the function $\rho(x,t):=\varphi(x-ct)$, for $(x,t)$ with $x-ct \in I$, is a {\em traveling-wave} solution (briefly, a TW) to equation \eqref{e:E} with wave speed $c$ and wave profile $\phi$ if we have
\begin{equation}\label{e:def-tw}
\int_I \left(D\left(\phi(\xi)\right)\phi'(\xi) - f\left(\phi(\xi)\right) + c\phi(\xi) \right)\psi'(\xi) - g\left(\phi(\xi)\right)\psi(\xi)\,d\xi =0,
\end{equation}
for every $\psi\in C_0^\infty(I)$.
\end{definition}

We say that a TW is {\em global} if $I=\R$, while it is {\em strict} if $I\ne \R$ and $\phi$ cannot be extended to $\R$; a TW is {\em classical} if $\varphi$ is differentiable, $D(\varphi) \varphi'$ is absolutely continuous and \eqref{e:ODE} holds a.e.; a TW is {\em sharp at $\ell$} if there exists $\xi_{\ell}\in I$, with $\phi(\xi_{\ell})=\ell$, such that $\phi$ is classical in $I\setminus\{\xi_\ell\}$ and not differentiable at $\xi_{\ell}$. Analogously, a TW is {\em classical} at $\ell$ if it is classical in a neighborhood of $\xi_\ell$.

A TW is {\em a wavefront} if it is global, with a monotone, non-constant profile $\phi$ which satisfies either \eqref{e:infty} or the converse condition; a speed $c$ is {\em admissible} if there exists a wavefront with speed $c$. A TW is also called: {\em a semi-wavefront to} $1$ (or {\em to} $0$) if $I=(a,\infty)$ for $a\in\R$, the profile $\phi$ is monotonic, non-constant and $\phi(\xi)\to1$ (respectively, $\phi(\xi)\to0$) as $\xi\to\infty$; {\em a semi-wavefront from} $1$ (or {\em from} $0$) if $I=(-\infty,b)$ for $b\in\R$, the profile $\phi$ is monotonic, non-constant and $\phi(\xi)\to1$ (respectively, $\phi(\xi)\to0$) as $\xi\to-\infty$. About semi-wavefronts, we say that $\phi$ connects $\phi(a^+)$ ($1$ or $0$) with $1$ or $0$ (resp., with $\phi(b^-)$).

The problem of the loss of regularity of $\phi$ depends on whether the parabolic equation degenerates or not; more precisely, by arguing on equation \eqref{e:ODE}, it is easy to see that if $f, D$ are of class $C^1$ and $g$ of class $C^0$, then $\phi$ is classical in every interval $I_{\pm}\subseteq I$ where $\pm D\left(\phi(\xi)\right) > 0$ for $\xi \in I_{\pm}$; moreover, $\phi\in C^2(I_\pm)$ (see e.g. \cite[Lemma 2.20]{GK}).

%Besides the main assumptions (f), (D) and (g) stated in the Introduction, we also need for some results two further regularity conditions on the product $Dg$ at $0$ and $1$:
%\begin{equation}
%\label{Dg}
%D^+Dg(0) < \infty \ \mbox{ and } \ D^{-}Dg(1)<\infty.
%\end{equation}
%If $D$ does not vanish and $f$, $g$ are more regular, then $\eqref{Dg}_1$, for instance, implies that the dynamical system underlying \eqref{e:ODE} has a node at $(0,0)$ if $c$ is sufficiently large. Condition $\eqref{Dg}_1$ is always satisfied if $D(0)=0$, while, if $D(0)\ne0$, it requires that $g$ is sublinear close to $0$; condition $\eqref{Dg}_2$ is commented analogously. At last,

We denote the {\em difference quotient} of a function $F=F(\phi)$ with respect to a point $\phi_0$ as
\[
\delta(F,\phi_0)(\phi) := \frac{F(\phi)-F(\phi_0)}{\phi-\phi_0}.
\]

%\begin{theorem}[Case {$\rm (D_{pn})$}]
%\label{thm:pn}
%Assume $(f)$, $(g)$ and ${\rm (D_{pn})}$.
%
%Assume {\boldmath $\alpha>\gamma$}. Then, there exist wavefronts of \eqref{e:E} with speed $c$ and profle $\phi$ satisfying \eqref{e:infty} if and only if $c=c_1^* \ge  c_{3.2}^*$. Moreover, \eqref{e:c1*1}--\eqref{e:c1*3} must hold.
%
%Assume {\boldmath $\alpha=\gamma$}. Equation \eqref{e:E} admits wavefronts with speed $c$ and profile $\phi$ satisfying \eqref{e:infty} if and only if $c_{1.1}^*=c_{3.2}^*$. Moreover, in such a case, $c=h(\alpha)$ and \eqref{e:c3*1} must hold.
%
%Assume {\boldmath $\alpha<\gamma$}. Equation \eqref{e:E} admits wavefronts with speed $c$ and profile $\phi$ satisfying \eqref{e:infty} if and only if $c_{3.2}^* \le c_7^*$. Moreover, if this latter holds, for every $c\in  [c_{3.2}^*, c_7^*]$ and $s\in  (0,-\beta(c)]$ there exists a (unique up to space shifts) wavefront with speed $c$ and whose profile $\phi$ satisfies
%\begin{equation}
%\label{e:phiD}
%\lim_{\xi \to \xi_\gamma}
%\left(D(\phi)\phi\rq{} \right)(\xi)=s.
%\end{equation}
%Furter, it must hold
%\[\max\left\{\sup_{[\gamma,1)}\delta(f,1),\ h(1)+2 \sqrt{D_-(q)(1)}\right\} \le
%\min\left\{ \inf_{[0,\gamma]\setminus\{\alpha\}}\delta(f,\alpha),\ h(\alpha)-2\sqrt{\dot{D}(\alpha)g(\alpha)}\right\}.\]
%\end{theorem}

\smallskip

In the following we need some growth conditions on $g$ at $\gamma$ that we state now. More precisely, on a case by case basis, we assume that there exists $L>0$ such that either
\begin{equation}
\label{g sublinear gamma}
|g(\phi)| \le L|\phi-\gamma|\ \mbox{ in a left- or right-neighborhood of $\gamma$,}
\end{equation}
or
 \begin{equation}
 \label{integrability g}
 |g(\phi)| \ge L\left|\phi-\gamma\right|^\tau,\ \tau\in(0,1), \ \mbox{in a full neighborhood of $\gamma$.
}
 \end{equation}

The following example describes a very simple case in which the presence in \eqref{e:E} of a non-zero $f$ allows the existence of wave profiles regardless of the relative order between $\alpha$ and $\gamma$. We highlight here the first important difference with the case $f=0$, in which we have solutions only if $\alpha>\gamma$ (see \eqref{e:gammaalfa}).

\begin{example}
\label{example1}
{
\rm
For $\alpha, \gamma \in (0,1)$, define, for $\phi \in [0,1]$,
\[ D(\phi)=\left(\alpha-\phi\right), \ g(\phi)=\phi\left(1-\phi\right)\left(\phi-\gamma\right).\]
Hence, $D$ satisfies (D) and $g$ satisfies (g). Let $f$ be defined by $f(\phi)=-\phi^3+\frac{2\alpha +1}{2}\phi^2+(\gamma-\alpha)\phi$; hence, $h(\phi)=-3\phi^2+(2\alpha+1)\phi+(\gamma-\alpha)$. A direct check shows that $\phi=\phi(\xi) \in (0,1)$, defined  as
\begin{equation}
\label{e:ex1phi}
\phi(\xi):=\frac1{e^\xi+1}, \ \xi \in \R,
\end{equation}
satisfies \eqref{e:ODE}  with $c=0$ everywhere in $\R$. Also, $\phi$ is strictly decreasing, $\phi(-\infty)=1$ and $\phi(\infty)=0$. Hence, $\phi$ is the profile of a wavefront with wave speed $c=0$.
}
\end{example}

Besides the very simple Example \ref{example1}, we shall explore a large variety of possible profiles, due to a non-zero $f$. Among new families of profiles, there are those which deal with {\em stretchings at level $\gamma$}. A wave profile $\phi$ with speed $c$ is said to be unique up to stretchings at level $\gamma$ if, for each $\delta_1,\delta_2>0$,  the functions $\phi_{\delta_1,\delta_2}$ defined by
\begin{equation}
\label{stretch}
\phi_{\delta_1,\delta_2}(\xi):=\begin{cases} \phi(\xi+\delta_1) \ &\mbox{ if } \ \xi < \xi_\gamma -\delta_1,\\
\gamma \ &\mbox{ if } \ \xi_\gamma-\delta_1 < \xi < \xi_\gamma+\delta_2,\\
\phi(\xi-\delta_2) \ &\mbox{ if }  \ \xi>\xi_\gamma+\delta_2,
\end{cases}
\end{equation}
where $\xi_\gamma$ is a point such that $\phi(\xi_\gamma)=\gamma$, are still wave profiles with speed $c$; see Figure \ref{f:plateaus-slopes} on the left. Note, this kind of solutions appeared also in \cite{CM-ZAMP}; they are possible because $\gamma$ is a zero of $g$ and, roughly speaking, if $\phi$ is such that $D(\phi)\phi\rq{}$ vanishes when $\phi$ reaches $\gamma$ (see Lemma \ref{lem:def}). If $\alpha>\gamma$ plateaus never occur, while they always do if $\alpha=\gamma$.  The case $\alpha<\gamma$ is more controversial: if \eqref{g sublinear gamma} is satisfied, then plateaus do not occur; if it fails, then we show in Examples \ref{other} and \ref{other2}, that profiles {\em can} have plateaus or do {\em not}.
%If $\alpha < \gamma$, stretchings are still possible but only starting from specific profiles (called $\phi_0$ in Theorem \ref{th:multiplicity} $(c)$) and only if $g$ satisfies \eqref{integrability g} (see Example \ref{other}). Nonetheless, \eqref{integrability g} is not a sufficient condition in order to have plateaus at $\gamma$, as Example \ref{other2} shows.

%%%%%%%%%%%%%%%%
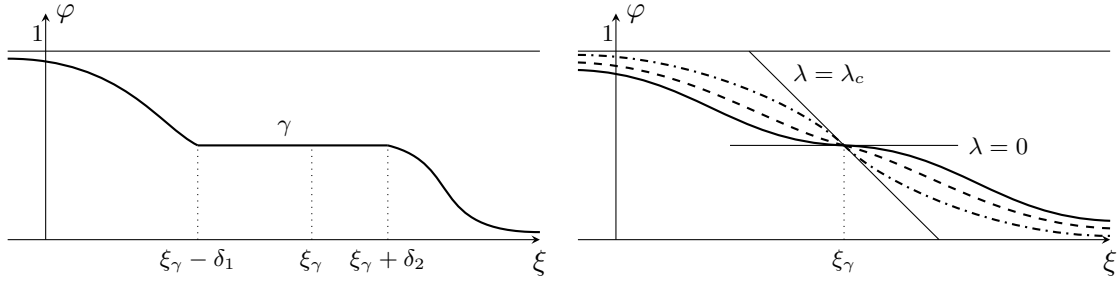
\begin{figure}[htb]
\begin{center}

\begin{tikzpicture}[>=stealth, scale=0.5]
%(a)
%axes
\draw[->] (-7,0) --  (7,0) node[below]{$\xi$} coordinate (x axis);
\draw[->] (-6,0) -- (-6,6) node[right]{$\phi$} coordinate (y axis);
\draw (-6,5) node[left=3, above]{\footnotesize$1$} -- (7,5);
\draw (-7,5) -- (-6,5);
\draw[thick] (-7,4.8) .. controls (-4,4.8) and (-3,3) .. (-2,2.5);
\draw[dotted] (-2,0) node[below]{\footnotesize$\xi_\gamma-\delta_1$} -- (-2,2.5);
\draw[dotted] (1,0) node[below]{\footnotesize$\xi_\gamma$} -- (1,2.5);
\draw[thick] (-2,2.5) -- (3,2.5);
\draw[dotted] (3,0) node[below]{\footnotesize$\xi_\gamma+\delta_2$} -- (3,2.5);
\draw (0,2.5) node[right=4, above]{\footnotesize$\gamma$};
\draw[thick] (3,2.5) .. controls (5,2) and (4,0.2) .. (7,0.2);

%(b)
\begin{scope}[xshift=15cm]
%axes
\draw[->] (-7,0) --  (7,0) node[below]{$\xi$} coordinate (x axis);
\draw[->] (-6,0) -- (-6,6) node[right]{$\phi$} coordinate (y axis);
\draw (-6,5) node[left=3, above]{\footnotesize$1$} -- (7,5);
\draw (-7,5) -- (-6,5);
\draw[thick] (-7,4.5) .. controls (-4,4.4) and (-3,2.6) .. (0,2.5);
\draw[dotted] (0,0) node[below]{\footnotesize{$\xi_\gamma$}} -- (0,2.5);
\draw[thick] (0,2.5) .. controls (3,2.4) and (4,0.6) .. (7,0.5);

\draw[thick, dashed] (-7,4.7) .. controls (-4,4.6) and (-2,3) .. (0,2.5);
\draw[thick,dashed] (0,2.5) .. controls (2,2) and (4,0.4) .. (7,0.3);

\draw[thick, dashdotted] (-7,4.9) .. controls (-4,4.9) and (-1,3.5) .. (0,2.5);
\draw[thick,dashdotted] (0,2.5) .. controls (1,1.5) and (4,0.1) .. (7,0.1);

\draw (-3,2.5) -- (3,2.5) node[right]{\footnotesize{$\lambda=0$}};
\draw (2.5,0) -- (-2.5,5) node[right=30, below=1]{\footnotesize{$\lambda=\lambda_c$}};
%\draw[dotted] (-2,0) node[below]{\footnotesize$\xi_\gamma-\delta_1$} -- (-2,2.5);
%\draw[dotted] (1,0) node[below]{\footnotesize$\xi_\gamma$} -- (1,2.5);
%\draw[thick] (-2,2.5) -- (3,2.5);
%\draw[dotted] (3,0) node[below]{\footnotesize$\xi_\gamma+\delta_2$} -- (3,2.5);
%\draw (0,2.5) node[right=4, above]{\footnotesize$\gamma$};
%\draw[thick] (3,2.5) .. controls (5,2) and (4,0.2) .. (7,0.2);
\end{scope}

\end{tikzpicture}
\end{center}
\caption{\label{f:plateaus-slopes}{On the left: a profile with a plateau at level $\gamma$ from $\xi_\gamma-\delta_1$ to $\xi_\gamma+\delta_2$. On the right: some profiles in case {\em (c)} in Theorem \ref{th:multiplicity}}; profiles have been shifted so that $\phi(\xi_\gamma)=\gamma$.}
\end{figure}
%%%%%%%%%%%%%%%%%%%%%%%%%%%%%%%%%%%%%%%%%%%%%%%

We now state our main results. We chose to keep the statements as simple as possible, favoring immediacy versus completeness; much more detailed (and, unfortunately, technical) results are either provided in the next sections or can be easily deduced as outlined below.

The first result shows the existence of new families of solutions with respect to the case $f=0$, which concern not only the existence of profiles in the cases $\alpha\le\gamma$ but also the {\em intrinsic} loss of uniqueness. The latter is due either to the formation of plateaus (in the case $\alpha=\gamma$ and $\alpha<\gamma$ if $\lambda=0$), or to the existence of substantially different solutions (in the case $\alpha<\gamma$, for $\lambda<0$). We recall that a speed $c$ is admissible for Equation \eqref{e:E} if there exists a wavefront with speed $c$.

\begin{theorem}
\label{th:multiplicity}
Assume (f), (g) and (D), and let $\mathcal{J}$ be the set of admissible speeds for Equation \eqref{e:E}. Then either $\mathcal{J}$ is empty or
\begin{enumerate}[(a)]
\item if $\alpha > \gamma$ then $\mathcal{J}$ contains a unique speed, and the corresponding profile is unique (up to shifts);

\item if $\alpha=\gamma$ then $\mathcal{J}$ is a bounded interval, and for every $c\in\mathcal{J}$ the corresponding profile is unique (up to shifts and stretchings at level $\gamma=\alpha$). Moreover, if \eqref{integrability g} holds then $\mathcal J$ must be also closed;

\item if $\alpha < \gamma$ then $\mathcal{J}$ is a bounded interval. For every $c \in \mathrm{int}\,\mathcal J$ there is a family of profiles $\{\phi_\lambda\}_\lambda$ determined by
\[\phi_\lambda\rq{}(\xi_\gamma) =\lambda\in [\lambda_c,0),\]
for some $\lambda_c<0$, where $\xi_\gamma$ satisfies (uniquely) $\phi_\lambda(\xi_\gamma)=\gamma$ and $\phi_\lambda$ is unique up to shifts.

If \eqref{integrability g} holds, then $\mathcal J$ is closed and, for every $c\in \mathcal J$, there exist profiles corresponding to $\lambda=0$. In such a case, $\phi_0$ is unique up to shifts and stretchings at level $\gamma$.
\end{enumerate}
Moreover, suppose that a profile $\phi$ exists. If $\alpha \ge \gamma$ then  $\phi\rq{}<0$ if $\phi \neq 0,\alpha,1$ while if  $\alpha<\gamma$  then $\phi\rq{}<0$ if $\phi \neq 0,\alpha, \gamma,1$; in case $\alpha<\gamma$ and if \eqref{g sublinear gamma} holds, then $\phi\rq{}<0$ also if $\phi=\gamma$.
\end{theorem}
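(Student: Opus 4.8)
The plan is to recast the boundary-value problem \eqref{e:ODE}--\eqref{e:infty} as two singular first-order problems — one on $[0,\alpha]$, where $D>0$, and one on $[\alpha,1]$, where $D<0$ — to analyse each by comparison-type arguments, and then to glue the two pieces at $\phi=\alpha$. I normalise by \eqref{e:infty}; the non-decreasing case is symmetric. By the reduction of Section~\ref{s:reduction1}, a wavefront $\phi$ is monotone and non-constant, hence strictly decreasing off its plateaus, and it crosses the levels $\alpha$ and $\gamma$; on each maximal interval where $D(\phi)\neq0$ it is of class $C^2$, and parametrising by $\phi$ the function $z:=D(\phi)\phi'$ satisfies, along the orbit,
\begin{equation}\label{e:reduced-plan}
\frac{dz}{d\phi}=h(\phi)-c-\frac{g(\phi)D(\phi)}{z(\phi)},
\end{equation}
with $z<0$ on $(0,\alpha)$ and $z>0$ on $(\alpha,1)$. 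Here \eqref{e:infty} forces $z(0^+)=z(1^-)=0$ (the limits being attained along the orbit, with a sharp profile when $D$ vanishes at the corresponding extremum), whereas $D(\alpha)=0$ together with the continuity of $D(\phi)\phi'$ across $\xi_\alpha$ forces $z(\alpha^-)=z(\alpha^+)=0$. Consequently $c$ is admissible if and only if, for that $c$, both the left problem on $[0,\alpha]$ and the right problem on $[\alpha,1]$ for \eqref{e:reduced-plan} are solvable; since these two requirements are decoupled once $c$ is fixed, $\mathcal J=\mathcal J_-\cap\mathcal J_+$, with $\mathcal J_\mp$ the corresponding sets of speeds.

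For each piece I would construct solutions of \eqref{e:reduced-plan} via the refined upper- and lower-solution method of Section~\ref{s:first order}, and derive uniqueness from the observation that, wherever $z$ stays away from $0$, the difference of two solutions solves a linear homogeneous ODE — with the continuous coefficient $g(\phi)D(\phi)/(z_1z_2)$ — and hence vanishes by Gronwall's inequality; the only possible source of non-uniqueness is a point at which $z=0$ and $g=0$ simultaneously. The position of $\gamma$ relative to $\alpha$ then dictates the trichotomy. When $\alpha>\gamma$, the zero $\gamma$ of $g$ is interior to the positive-diffusion region $(0,\alpha)$, where $z<0$ strictly, so no such point occurs; one of the two pieces — the one carrying the sign change of $g$ — is then of bistable type and pins $c$ to a single value, the associated profile being unique up to shifts, exactly in the spirit of \cite{Maini-Malaguti-Marcelli-Matucci2007,Kuzmin-Ruggerini}. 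When $\alpha=\gamma$, the glue state is a common zero of $D$ and $g$; each of the two pieces is of monostable type issued from this degenerate state, so (the negative-diffusion piece being time-reversed) $\mathcal J_-$ and $\mathcal J_+$ are oppositely oriented half-lines, $\mathcal J$ is a bounded interval, and since $z=0=g$ at the glue state one may insert a plateau at level $\gamma$ of arbitrary length (Lemma~\ref{lem:def}); this being the only further freedom, the profile is unique up to shifts and stretchings \eqref{stretch}. When $\alpha<\gamma$, the zero $\gamma$ of $g$ is interior to the negative-diffusion region, where $D(\gamma)\neq0$; the orbit on $[\alpha,1]$ crosses $\gamma$ with $z(\xi_\gamma)=D(\gamma)\lambda$, and because $g$ is only continuous at $\gamma$ this orbit fails to be unique when $\lambda=0$ but is uniquely continued for $\lambda<0$; letting $\lambda$ vary over an admissible interval $[\lambda_c,0)$ yields the family $\{\phi_\lambda\}$, and sweeping $\lambda$ forces the admissible speeds to fill a bounded interval whose interior points are each realised by the whole family. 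In the borderline situations, the attainment of the endpoints of $\mathcal J$, and the realisation of $\lambda=0$ (which then permits a plateau at $\gamma$), reduce to the convergence of improper integrals built from \eqref{e:reduced-plan} near $\gamma$: the lower bound \eqref{integrability g} with $\tau<1$ makes them converge, so $\mathcal J$ becomes closed and $\phi_0$ exists, while \eqref{g sublinear gamma} keeps $z<0$ near $\gamma$ and forbids plateaus there. The behaviour of the profile at the state $\alpha$ (finite slope versus sharpness, according to whether $D$ vanishes at $0,1$) is precisely what Appendix~\ref{s:appendix} settles.

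For the monotonicity statement, let $\phi$ be a profile with $\phi'(\xi_0)=0$. If $\phi_0:=\phi(\xi_0)\notin\{0,\gamma,\alpha,1\}$, then $D(\phi_0)\neq0$, so $\phi$ is $C^2$ near $\xi_0$ and \eqref{e:ODE} at $\xi_0$ reads $D(\phi_0)\phi''(\xi_0)=-g(\phi_0)\neq0$; but $\phi'\le0$ with $\phi'(\xi_0)=0$ forces $\phi''(\xi_0)=0$, a contradiction. Hence $\phi'<0$ whenever $\phi\notin\{0,\gamma,\alpha,1\}$. For $\phi_0=\gamma$ I would argue through the reduction: if $\alpha\neq\gamma$ then $D(\gamma)\neq0$, so $\phi'(\xi_\gamma)=0$ is equivalent to $z(\xi_\gamma)=0$; when $\alpha>\gamma$ this cannot happen since $z<0$ throughout the interior of the positive-diffusion region, and when $\alpha<\gamma$ and \eqref{g sublinear gamma} holds it cannot happen because that growth bound keeps $z$ bounded away from $0$ at $\gamma$ (the case $\lambda=0$ being excluded); whereas if $\alpha=\gamma$, or if $\alpha<\gamma$ with only \eqref{integrability g} available, $\phi'=0$ at $\gamma$ genuinely occurs (plateaus, respectively $\lambda=0$), which is why $\gamma$ is then absent from the conclusion. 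The states $0,\alpha,1$ are excluded throughout because at them $\phi'$ may vanish (asymptotic approach to an equilibrium, or $D=0$) or even blow up.

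The crux of the argument is the first-order analysis: producing upper and lower solutions for the singular equation \eqref{e:reduced-plan} that are sharp enough near the degenerate states — chiefly $\alpha$, and also $0$ or $1$ when $D$ vanishes there — so as to pin down the exact shape of $\mathcal J_\mp$ (a half-line, a singleton, or a $\lambda$-swept interval) together with its dependence on the growth conditions \eqref{g sublinear gamma}--\eqref{integrability g}; the gluing at $\alpha$, where profiles may be singular, is the other technically demanding step.
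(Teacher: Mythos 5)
Your architecture is the same as the paper's: reduce via $z=D(\phi)\phi'$ to singular first-order problems on the regions where $D$ has constant sign, solve them by comparison and upper/lower solutions, intersect the resulting sets of speeds, and obtain the monotonicity by evaluating \eqref{e:ODE} at a critical point. Within that shared frame there are, however, genuine gaps. The main one is your pivot ``$c$ is admissible if and only if both first-order problems are solvable.'' Only necessity comes for free; sufficiency requires showing that a solution $z$ integrates back to a \emph{global} profile, i.e.\ that $\int D(\phi)/z(\phi)\,d\phi$ converges at $\phi=\alpha$ (and at $\gamma$ for the $\phi_0$-profiles), so that the two branches meet at a finite $\xi_\alpha$ and can be glued. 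This is the content of Proposition \ref{prop:suff}; it is nontrivial (it needs the lower solution $-k\bar D(\psi)(\psi-\bar\alpha)^\tau$ and results from \cite{CdRM, CM-DPDE}), and it is precisely where \eqref{integrability g} enters in parts $(b)$ and $(c)$. Taken at face value, your ``iff'' would make $\mathcal J=[c_{3.2}^*,c_{1.1}^*]$ unconditionally closed in case $(b)$, which contradicts the theorem's careful restriction of closedness to the hypothesis \eqref{integrability g}; conversely, if the ``iff'' is dropped, the interval property of $\mathcal J$ without \eqref{integrability g} needs the separate squeezing argument of Remark \ref{rem:interval}, which your proposal does not supply. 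You do flag the gluing as ``technically demanding'' at the end, but the logic of parts $(b)$ and $(c)$ is built on the unproved equivalence.

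Second, your uniqueness criterion --- ``the only possible source of non-uniqueness is a point at which $z=0$ and $g=0$ simultaneously'' --- is wrong and inconsistent with your own case $(c)$. The singular set of the reduced equation $\dot z=h-c-Dg/z$ is $\{z=0\}\cap\{Dg=0\}$, which contains $\phi=\alpha$ even when $g(\alpha)\neq0$ (because $D(\alpha)=0$), and it is exactly at $\alpha$ (and at $1$) that the one-parameter family of case $(c)$ is seeded: every $z_s$ in \eqref{e:zc2} satisfies $z_s(\alpha)=0$ yet they differ at $\gamma$. Under your stated criterion the orbit through $(\alpha,0)$ would be unique and the family $\{\phi_\lambda\}$ could not exist. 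Even the corrected condition $z=Dg=0$ is only necessary for branching: whether a continuum of solutions emanates from a degenerate endpoint depends on which endpoint carries the zero datum relative to the signs of $z$ and $q$ (Lemma \ref{lem:0101} $(a)$ versus $(d)$), and it is this directional dichotomy --- not a Gronwall estimate --- that makes case $(a)$ rigid (the shooting function $F$ at $\gamma$ selects one $c$) and case $(c)$ degenerate (a whole interval of matching values $s$ at $\gamma$). Finally, the strict inequality $\phi'(\xi_\gamma)<0$ when $\alpha>\gamma$ (and when $\alpha<\gamma$ under \eqref{g sublinear gamma}) is asserted but not proved: your second-order argument fails at $\gamma$ because $g(\gamma)=0$, and ruling out $z(\gamma)=0$ requires the integral estimate \eqref{z1}--\eqref{e:Pirlo}, respectively the auxiliary semi-wavefront argument via \cite[Theorem 2.9]{CM-DPDE}, as in Proposition \ref{prop:nec}.
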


%Comment on the one-parameter family of profiles given by $(c)$.

We now comment on Theorem \ref{th:multiplicity}. First, when $\alpha \neq \gamma$, every admissible profile $\phi$ must satisfy $\phi\rq{}(\xi_\alpha)<0$ where $\phi(\xi_\alpha)=\alpha$, because otherwise \eqref{e:ODE} would imply that $\alpha$ must be a zero of $g$. Second, in $(b)$ and $(c)$ of Theorem \ref{th:multiplicity} the interval $\mathcal J$ can degenerate to a single value, as we show in Example \ref{ex:Jeqzero}. Third, there is no \lq\lq continuity\rq\rq\ of results when $\gamma\to0$: if $\alpha>\gamma>0$ we have at most one admissible speed, while in the case $\gamma=0$ there is a whole half line of such speeds \cite{BCM2}. We refer to Figure \ref{f:plateaus-slopes} on the right for the case {\em (c)}.

\smallskip

The next theorem generalizes a result proved in \cite[Theorem 3.1]{Maini-Malaguti-Marcelli-Matucci2007} in the case $f=0$, see \eqref{e:gammaalfa}, and provides necessary and sufficient conditions for the existence of profiles when $f$ is convex.

\begin{theorem}
\label{thm:f convex}
Under (f), (g) and (D), assume that $f$ is convex.
\par
If Equation \eqref{e:E} has wavefronts then
\begin{equation}\label{e:fconv1} \int_{0}^{\alpha} D(\rho)g(\rho)\,d\rho> 0.\end{equation}

If it holds that
\begin{equation}\label{fconvex suff cond} \int_{0}^{\alpha}D(\rho)g(\rho)\,d\rho \ge \frac{2\alpha^2\Sigma}{3}  \left(\Sigma+\sqrt{\Sigma^2+\frac2{\alpha} M}\right),\end{equation}
where $\Sigma:=h(1)-h(0)+2\sqrt{\sup_{[\alpha,1)}\delta\left(Dg,1\right)}$ and $M:=\max_{\rho\in [0,\gamma]}-D(\rho)g(\rho)>0$,
%\[ \Sigma:=h(1)-h(0)+2\sqrt{\sup_{[\alpha,1)}\delta\left(Dg,1\right)} \quad \mbox{ and } \quad M:=\max_{\rho\in [0,\gamma]}-D(\rho)g(\rho)>0,\]
then Equation \eqref{e:E} has wavefronts.
\end{theorem}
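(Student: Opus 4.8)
The plan is to exploit the reduction of \eqref{e:ODE} to first-order singular problems (Section \ref{s:reduction1}) on each of the two subintervals $(0,\alpha)$ and $(\alpha,1)$ where $D$ has a fixed sign, and to transport the convexity hypothesis on $f$ through that reduction into a workable differential inequality. On $(0,\alpha)$ one sets $z = D(\phi)\phi'$ as a function of $\phi$, obtaining a scalar equation of the form $\dot z = (h(\phi)-c) - D(\phi)g(\phi)/z$ (with $z<0$ for a decreasing profile), and similarly on $(\alpha,1)$ where now $D<0$. The boundary conditions \eqref{e:infty} force $z\to 0$ as $\phi\to 0^+$ and as $\phi\to 1^-$, and a wavefront corresponds to a solution $z$ on $(0,\alpha)$ connecting $0$ (at $\phi=0$) to some value $z(\alpha)\le 0$ at $\alpha$, matched with a solution on $(\alpha,1)$ connecting that same $z(\alpha)$ to $0$ (at $\phi=1$). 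The sign of $z(\alpha)$ — whether the profile is classical or sharp at $\alpha$ — will be handled using the results recalled/proved in Appendix \ref{s:appendix}.

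For the necessary condition \eqref{e:fconv1}: I would integrate the $z$-equation on $(0,\alpha)$. Writing $\dot z + D(\phi)g(\phi)/z = h(\phi)-c$ and integrating from $0$ to $\alpha$, the left side's first term gives $z(\alpha)-z(0^+)=z(\alpha)\le 0$; convexity of $f$ means $h=\dot f$ is nondecreasing, which I would use to bound $\int_0^\alpha (h(\phi)-c)\,d\phi$ and force $c \ge h(\alpha)$ (this is exactly the sharpened second condition of \eqref{e:gammaalfa} advertised in the introduction: a profile passing through $\alpha$ with $D$ vanishing of first order there requires, via the appendix analysis, that $c\ge h(\alpha)$, or else no connection across $\alpha$ is possible). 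Once $c\ge h(\alpha)$ is in hand, and using $g<0$ on $(0,\gamma)$, $g>0$ on $(\gamma,1)$ with $D>0$ on $(0,\alpha)$, a careful sign analysis of $\int_0^\alpha D(\phi)g(\phi)/z\,d\phi$ — splitting at $\gamma$ and using that $z$ stays negative — yields $\int_0^\alpha D(\phi)g(\phi)\,d\phi>0$. This also re-derives, en route, that $\alpha>\gamma$ is necessary, since otherwise $Dg\le 0$ on all of $(0,\alpha)$ and the integral cannot be positive.

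For the sufficient condition \eqref{fconvex suff cond}: the strategy is to build an explicit supersolution/subsolution pair for the $z$-equation that forces the connection through $\alpha$ to exist. On $(0,\alpha)$, the quantity $M=\max_{[0,\gamma]}(-Dg)>0$ controls how negative $z$ is forced to become near $\gamma$; on $(\alpha,1)$, the constant $\sup_{[\alpha,1)}\delta(Dg,1)$ (a difference-quotient bound that linearizes $Dg$ near the degenerate endpoint $1$) together with $h(1)-h(0)$ — again a monotone quantity thanks to convexity — controls how close to $0$ a solution starting at $z(\alpha)$ can return by $\phi=1$. Comparing a quadratic-type barrier (the appearance of $\alpha^2$, the factor $2/3$, and the square root $\Sigma+\sqrt{\Sigma^2+2M/\alpha}$ all strongly suggest an explicit parabola/linear barrier and solving a quadratic in $|z(\alpha)|$) shows that when the "available" negative mass $\int_0^\alpha Dg$ is at least the displayed threshold, the value $z(\alpha)$ produced by the $(0,\alpha)$-problem is negative enough to be matched by a solution of the $(\alpha,1)$-problem; conversely any matching value works, so a global wavefront exists. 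The speed is then pinned (or confined to an interval) by the matching.

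The main obstacle I expect is the sharp bookkeeping at $\phi=\alpha$. Because $D$ vanishes there (to first order, generically), the first-order systems are genuinely singular at $\alpha$, and the admissible matching values $z(\alpha)$ are constrained by the delicate local analysis deferred to Appendix \ref{s:appendix} (existence of a classical vs. sharp connection across $\alpha$, and the precise inequality $c\gtreqless h(\alpha)$). Getting the constants in \eqref{fconvex suff cond} to come out \emph{exactly} as stated — rather than with a worse absolute constant — will require choosing the barrier functions optimally and tracking the quadratic inequality for $|z(\alpha)|$ with care; this is where the convexity of $f$ is used most essentially, since it is what makes $\int(h(\phi)-c)\,d\phi$ estimable by its endpoint values $h(0),h(1),h(\alpha)$ with the right sign.
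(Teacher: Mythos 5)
Your overall framework (reduction to the singular first--order problems on $(0,\alpha)$ and $(\alpha,1)$) is the right one, but both halves of your argument have genuine gaps. For the necessity of \eqref{e:fconv1}, integrating $\dot z + D g/z = h-c$ directly on $(0,\alpha)$ produces the weighted integral $\int_0^\alpha D(\phi)g(\phi)/z(\phi)\,d\phi$, whose sign tells you nothing about the sign of $\int_0^\alpha Dg$: since $z<0$ there, the integrand $Dg/z$ is \emph{positive} on $(0,\gamma)$ and \emph{negative} on $(\gamma,\alpha)$, so no ``careful sign analysis'' closes this. The step that works is to multiply the equation by $z$ first, so that $z(\alpha)^2-z(0)^2=2\int_0^\alpha (h-c)z - 2\int_0^\alpha Dg$ and the unweighted integral $\int_0^\alpha Dg=\int_0^\alpha(h-c)z$ appears (cf.\ \eqref{integral eq}); then $c>h(\alpha)$ and $z<0$ give \eqref{e:fconv1} at once. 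Moreover, your source for $c\ge h(\alpha)$ is wrong: the local analysis at $\alpha$ in Proposition \ref{prop:reg alpha2}~$(a)$ does \emph{not} exclude $c\le h(\alpha)$ (it merely makes the profile sharp at $\alpha$ when $\dot D(\alpha)=0$). The strict inequality $c>h(\alpha)$ is a \emph{global} consequence of the same energy identity applied on $(\alpha,1)$, where $Dg<0$ and $z>0$ give $c>\min_{[\alpha,1]}h=h(\alpha)$ by convexity — and this computation itself presupposes $\alpha>\gamma$ (otherwise $Dg$ changes sign on $(\alpha,1)$). So the case $\alpha\le\gamma$ must be excluded \emph{beforehand}, not ``re-derived en route''; this is done by combining the necessary bounds $h(1)\le c\le c_{1.1}^*$ with the strict estimate $c_{1.1}^*<h(\alpha)$ from \eqref{e:c*below2}, which contradicts $h(\alpha)\le h(1)$.

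For the sufficiency of \eqref{fconvex suff cond}, your sketch reverse--engineers the constants but omits the two constructions that actually carry the proof. First, existence requires $c_{1.1}^*<c_{1.2}^*$ \emph{and} $c_1^*\ge c_{3.2}^*$ (Propositions \ref{prop:1} and \ref{prop:suff}); the former follows from convexity via \eqref{e:c*below2} ($c_{1.1}^*<h(\gamma)<c_{1.2}^*$) and is absent from your plan. Second, the inequality $c_1^*\ge c_{3.2}^*$ is not obtained by ``matching a negative enough $z(\alpha)$'': one fixes the explicit speed $\bar c:=h(1)+2\sqrt{\sup_{[\alpha,1)}\delta(Dg,1)}\ge c_{3.2}^*$, takes the left sub--problem solution $z_1$ at speed $\bar c$, and shows via the energy identity $z_1(\phi_0)^2-z_1(0)^2=2\int_0^{\phi_0}(h-\bar c)z_1-2\int_0^{\phi_0}q$ that $z_1$ must terminate at some $\phi_0<\alpha$, which forces $c_1^*>\bar c$ by the monotonicity of the shooting function. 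The barrier that makes the bookkeeping work is the square--root lower solution $-\Delta\phi^{1/2}$ with $\Delta=\Sigma\sqrt{\alpha}+\sqrt{\Sigma^2\alpha+2M}$ (not a parabola), and \eqref{fconvex suff cond} is precisely the statement $\int_0^\alpha q\ge\tfrac23\Sigma\Delta\,\alpha^{3/2}$. Without identifying $\bar c$, the shooting argument, and this specific barrier, the constants in \eqref{fconvex suff cond} cannot be reached.
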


Some comments on conditions \eqref{e:fconv1} and \eqref{fconvex suff cond} now follow. A consequence of \eqref{e:fconv1} is that $\alpha>\gamma$; in this case we deduce, from the proof of Theorem \ref{thm:f convex}, that $c>h(\alpha)$ if $f$ is convex (see \eqref{e:simple estimate}). In particular, Theorem \ref{thm:f convex} when $f=0$ gives \eqref{e:gammaalfa}. Condition \eqref{fconvex suff cond} is the extension of \cite[(3.13)]{Maini-Malaguti-Marcelli-Matucci2007} where, there, $\sigma$ is to be read as $\Sigma^2/4$. Since \eqref{fconvex suff cond} implies $\alpha>\gamma$, we notice that the right-hand side in \eqref{fconvex suff cond} involves only regions where the product $Dg$ is negative; also \eqref{e:fconv1} states that the region where $Dg$ is positive must prevail on the region where $Dg$ is negative. Condition \eqref{e:fconv1} does not depend on $f$, on the contrary of \eqref{fconvex suff cond}, but this is a mere consequence of the assumption of convexity. In the general case, the corresponding condition does involve $f$.

\smallskip

Our last main result concerns instead some sufficient conditions for the existence of the profiles in the case $f$ is strictly concave.

\begin{theorem}
\label{th:sufficient conditions}
Assume (f), (g) and (D). Also, suppose that $f$ is strictly concave. Then, Equation \eqref{e:E} admits wavefronts if
\begin{enumerate}[(a)]
\item $\alpha>\gamma$, $\dot g(\gamma)$ exists, it is positive and
\begin{equation}
\label{sufficient condition1}
\dot f(\gamma)-\frac{f(1)-f(\alpha)}{1-\alpha} \ge 2\left(\sqrt{\sup_{[\alpha,1)}\delta\left(Dg,1\right)}+\sqrt{\sup_{[0,\gamma)}\delta\left(Dg,\gamma\right)}\right);
\end{equation}
\item $\alpha=\gamma$, \eqref{integrability g} and \eqref{sufficient condition1};
\item $\alpha < \gamma$ and
\begin{equation}
\label{sufficient condition2}
\dot f(\gamma) - \frac{f(1)-f(\gamma)}{1-\gamma} > 2 \left(\sqrt{\sup_{[\gamma,1)}\delta\left(Dg,1\right)}+\sqrt{\sup_{[0,\gamma]\setminus\{\alpha\}}\delta\left(Dg, \alpha\right)}\right).
\end{equation}
\end{enumerate}
\end{theorem}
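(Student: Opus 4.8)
\medskip
\noindent\emph{Sketch of the proof.} The aim is to prove that, under each of (a)--(c), the set $\mathcal J$ of admissible speeds is non-empty; the remaining information then follows at once from Theorem \ref{th:multiplicity}. By the reduction of Section \ref{s:reduction1} (Fife's substitution, cf.\ \cite{Fife}), on each of the intervals $(0,\alpha)$ and $(\alpha,1)$, where $D$ keeps a constant sign, looking for a non-increasing profile of \eqref{e:ODE} satisfying \eqref{e:infty} is equivalent to solving a first-order singular problem for the unknown $z=z(\phi):=|D(\phi)\phi'|\ge 0$, regarded as a function of $\phi$, with $z$ vanishing at the endpoint $0$ (resp.\ $1$) where $\phi'\to0$. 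A wavefront with speed $c$ is obtained by concatenating a solution on $(0,\alpha)$ (the connection of $\alpha$ with $0$) and a solution on $(\alpha,1)$ (the connection of $1$ with $\alpha$) that agree at $\phi=\alpha$; since $D(\alpha)=0$, the common value $z(\alpha)$ may be zero or not (in the latter case the profile is singular at $\alpha$), and the admissible matchings there are exactly those classified in Appendix \ref{s:appendix}. Thus everything reduces to finding a speed $c$ for which the value at $\alpha$ of the $0$-branch and that of the $1$-branch coincide, compatibly with Appendix \ref{s:appendix}.

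The core step is to control the two branches by barriers for the first-order problem, of two kinds. Near a zero $\phi_0\in\{0,\gamma,\alpha,1\}$ of $Dg$ I would use comparison functions of the affine form $z(\phi)=a\,|\phi-\phi_0|$; that such a $z$ be a super- or a sub-solution amounts to asking that the quadratic $a^2-|c-h(\phi)|\,a+\delta(Dg,\phi_0)(\phi)$ have a positive root, i.e.\ that $|c-h(\phi)|\ge 2\sqrt{\delta(Dg,\phi_0)(\phi)}$ on the subinterval, hence $|c-h|\ge 2\sqrt{\,\sup\delta(Dg,\phi_0)}$ --- this is the source of the square-root terms in \eqref{sufficient condition1}--\eqref{sufficient condition2}. (No such condition is needed near $0$, or near $\alpha$ on the side where $\delta(Dg,\cdot)<0$.) On the bulk of each subinterval I would instead use comparison functions built from a primitive of $c-h$, which is where the mean slopes $(f(1)-f(\alpha))/(1-\alpha)$ and $(f(1)-f(\gamma))/(1-\gamma)$ enter. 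Since $f$ is strictly concave, $h=\dot f$ is strictly decreasing, so these mean slopes and $\dot f(\gamma)$ are well ordered and $h(\phi)$ is squeezed between endpoint values on each subinterval; this is exactly what allows one common $c$ to meet simultaneously the threshold on the subinterval abutting $1$ and the one on the subinterval abutting $\gamma$ (cases (a),(b)), or abutting $\alpha$ across the sign change of $Dg$ (case (c)), precisely when the gap between the two corresponding convective quantities exceeds $2(\sqrt{\cdot}+\sqrt{\cdot})$, that is, \eqref{sufficient condition1}, \eqref{sufficient condition2}.

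With such barriers in hand, the first-order results of Section \ref{s:first order} yield genuine monotone solutions $z>0$ trapped between them, vanishing at $0$ and at $1$ and with a well-defined value at $\alpha$; integrating $\phi'=-z/|D|$ and glueing the two branches at $\alpha$ gives a globally defined, non-constant, non-increasing profile satisfying \eqref{e:infty}, so that $c\in\mathcal J$. In case (a) the extra assumption that $\dot g(\gamma)$ exists and is positive is what permits the $0$-branch to cross cleanly the interior point $\phi=\gamma$ (there $g$, hence $Dg$, vanishes but $\phi'\ne 0$). In cases (b) and (c) the point $\gamma$ is degenerate together with $\alpha$ (if $\alpha=\gamma$) or lies on the backward branch (if $\alpha<\gamma$), and the integrability assumption \eqref{integrability g} is what forces the branch actually to reach $\gamma$: this simultaneously closes the interval $\mathcal J$ and produces the $\lambda=0$, plateau-type profile of Theorem \ref{th:multiplicity}(b)--(c).

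I expect the main obstacle to be precisely these degenerate points. At $\phi=\alpha$, where $D$ vanishes, the two branches are coupled and one must match their limiting behaviour there --- possibly nonzero, possibly singular --- which is the content of Appendix \ref{s:appendix}; the monotone dependence on $c$ of the two one-sided shooting values, together with the bounds coming from the barriers, is then what delivers a suitable $c$ (a single one in case (a), where the matching equation has at most one root; a whole interval in (b),(c), where the extra freedom at $\gamma$ enlarges the solution set). The second delicate point is $\phi=\gamma$ when $\alpha\le\gamma$: there the barrier $a\,|\phi-\gamma|$ is only one-sided, $D$ and $g$ may degenerate together, and one has to either exclude that $z$ reaches $0$ before $\gamma$ (the generic case) or, under \eqref{integrability g}, show that it does and can be continued by a plateau --- the threshold phenomenon underlying the interval structure of $\mathcal J$. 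After that, what remains is routine: verifying that the inequalities on the quadratic reduce, upon taking suprema, to \eqref{sufficient condition1}--\eqref{sufficient condition2}, and that the glued profile satisfies Definition \ref{d:tws}.
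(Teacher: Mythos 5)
Your overall strategy is in the right direction and is essentially the one the paper follows in disguise: the square-root terms do arise from linear comparison functions $a|\phi-\phi_0|$ for the first-order problem, and the mean slopes of $f$ from the convective part; the paper packages exactly these barrier arguments into the threshold estimates \eqref{e:c*below}, \eqref{e:c*below2}, \eqref{e:c*above}, \eqref{e:c*above2} of Corollary \ref{cor:estimates} and then simply checks the orderings $c_{1.1}^*<c_{1.2}^*$ and $c_{1.1}^*\ge c_{3.2}^*$ (case (a)), $c_{1.1}^*>c_{3.2}^*$ (case (b)), and $c_{3.2}^*<\min\{c_{1.1}^*,c_{3.1}^*\}$ (case (c)), before invoking Propositions \ref{prop:1}--\ref{prop:3} and \ref{prop:suff}. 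However, your sketch has concrete inaccuracies. First, the matching at $\phi=\alpha$ is mis-stated: since $\mathcal V=D(\phi)\phi'$ is continuous (Lemma \ref{lem:def}) and $z$ changes sign across $\alpha$ (negative on $(0,\alpha)$, positive on $(\alpha,1)$), one necessarily has $z(\alpha)=0$; there is no option of a nonzero common value, and singularity of $\phi'$ at $\alpha$ comes from the ratio $z/D$, not from $z(\alpha)\neq0$. Consequently the ``matching equation at $\alpha$'' you propose to solve in $c$ is not the right mechanism: in case (a) the genuine shooting point is $\gamma$ (the function $F(c)=z_1(c;\gamma)-z_2(c;\gamma)$ of Lemma \ref{lem:problem1}), while in cases (b),(c) existence follows from an interval of speeds determined by the threshold inequalities, not from a single matching root.

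Second, you import \eqref{integrability g} into case (c), where it is neither assumed nor needed: there the constructed solutions satisfy $z(\gamma)>0$, so the branch crosses $\gamma$ transversally and Proposition \ref{prop:suff} $(a.i)$ applies directly ( \eqref{integrability g} only becomes relevant if one additionally wants the $\lambda=0$ plateau profile, which is outside the statement). Third, the quantitative heart of the proof is missing: you never show how \eqref{sufficient condition1} or \eqref{sufficient condition2} actually yields the required threshold orderings. Two points in particular need an argument: in case (c) one must use the \emph{strict} concavity of $f$ to get $\dot f(\gamma)<\frac{f(\gamma)-f(\alpha)}{\gamma-\alpha}=\inf_{[0,\gamma]\setminus\{\alpha\}}\delta(f,\alpha)$ before \eqref{sufficient condition2} can be compared with the bounds on $c_{1.1}^*$ and $c_{3.1}^*$; and in case (b) the non-strict inequality \eqref{sufficient condition1} only gives $c_{1.1}^*\ge c_{3.2}^*$ via \eqref{e:c*above}, whereas one needs the \emph{strict} inequality $c_{1.1}^*>c_{3.2}^*$ (obtained in the paper through the refined estimate \eqref{e:c*above2} and the fact that the mean of $\delta(Dg,\gamma)$ is strictly below its supremum) in order to find an admissible $c<h(\alpha)$, which is what Proposition \ref{prop:suff} $(a.ii)$ requires. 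Without these steps the proposal does not close.
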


We now comment on conditions \eqref{sufficient condition1} and \eqref{sufficient condition2}. In order to fulfill \eqref{sufficient condition1}, a necessary assumption is that $Dg$ has finite slope at $\gamma^-$. Thus, in Part $(a)$ the condition \eqref{integrability g} is implicitly excluded (because $D(\gamma)\neq 0$). Instead, \eqref{integrability g} and \eqref{sufficient condition1} are not in contradiction in Part $(b)$ since, in this case, $Dg$ has derivative (which is zero) at $\gamma$.

\smallskip

We conclude this section by briefly giving an information on the behavior of a profile $\phi$ where $D$ degenerates.  In Appendix \ref{s:appendix} we compute $\phi\rq{}$ at points where $\phi=\alpha$, by exploiting precise results from \cite{BCM1, BCM2, CdRM}. We show the occurrence (according to the mutual order of $\alpha$ and $\gamma$, and some implicit conditions on the wave speed) of sharp profiles at $\alpha$, precisely with infinite slope at $\alpha$ or with different right- and left-derivatives at $\alpha$. This contrasts with the case $f=0$, where profiles are classical in $\alpha$. The behavior of $\phi$ at the equilibria $0$ and $1$ can be discussed as follows. Because of \eqref{e:mathcalVzero}, if $D(0)>0$ or $D(1)<0$ then $\phi$ is classical at $0$ or at $1$, respectively. If $D(0)=0$ or $D(1)=0$, $\phi$ can be sharp at $0$ or $1$, respectively. The regularity of the wavefronts approaching $0$ and $1$, as well as information on their strict monotonicity, can be deduced by \cite[Theorem 2.3]{CdRM} and \cite[Theorem 2.3]{BCM1} (see also \cite{MMconv}), after straightforward manipulations.

%\begin{remark}\footnote{D.: Da sistemare. Secondo me possiamo collocare il remark dopo la dimostrazione della Prop. \ref{prop:nec}.}
%{\rm
%The last part of Proposition \ref{prop:nec} is concerned with the formation of {\em plateaus}, i.e., stretches in $(0,1)$ where the profile is constant. This can only happen where $g$ vanishes, in order that a constant $\phi$ solves \eqref{e:ODE} in some intervals, and then at level $\gamma$. However, the pasting with left and right pieces of the profile can be done if and only if $z(\gamma)=0$, see \cite[(4.14)]{BCM2}. This condition is never satisfied if $\alpha>\gamma$  because $z$ solves \eqref{e:problem0}; on the contrary, it is always satisfied if $\alpha=\gamma$ because in that case $D(\gamma)=0$ (see \cite{CM-ZAMP} for an analogous situation).
%}\end{remark}

%%%%%%%%%%%%%%%%%%%%%%%%%%%%%%%%%%%%%%%%%%%%%%%%%%%%%%%%%%%%%%
%\section{Results about a standard first-order problem}
%\setcounter{equation}{0}
%%%%%%%%%%%%%%%%%%%%
%%%%%
\section{Reduction to a singular first-order problem}
\label{s:reduction1}
\setcounter{equation}{0}

In this section we take advantage of the monotonicity of the profiles to reduce the solvability of the problem for $\phi$, given by \eqref{e:ODE} and \eqref{e:infty}, to the solvability of proper first-order singular problems. This reduction shall be the main tool for the proofs of our results.

 We begin by giving a result that is instrumental in what follows.

\begin{lemma}
\label{lem:def}
Assume that $\phi$ is a profile of a wavefront with speed $c$, as in Definition \ref{d:tws}. Then, the function
\begin{equation}
\label{e:mathcalV}
\mathcal{V}(\xi):=D\left(\phi(\xi)\right) \phi\rq{}\left(\xi\right), \ \mbox{ for a.e. } \ \xi \in \R,
\end{equation}
has a continuous extension to the whole $\R$. Moreover, it holds:
\begin{equation}
\label{e:mathcalVzero}
\lim_{\xi \to \xi_1^+} \mathcal{V}(\xi)=0 \ \mbox{ and } \ \lim_{\xi \to \xi_0^-} \mathcal{V}(\xi)=0,
\end{equation}
where $\xi_1:=\inf\{\xi: \phi(\xi)<1\}$ and $\xi_0:=\sup\{\xi: \phi(\xi)>0\}$.
\end{lemma}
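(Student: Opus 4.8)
The strategy is to work directly from the weak formulation \eqref{e:def-tw} of a TW. First I would record the structural facts that follow from assumptions (f), (D), (g) together with Definition \ref{d:tws}: since $\phi$ is monotone non-increasing and bounded, it is differentiable a.e., $\phi'\le 0$ a.e., and the limits $\phi(\pm\infty)$ exist; by \eqref{e:infty} (or its converse) these are $1$ and $0$. The regularity remark in Section \ref{sec:new} gives that in any open interval $I_\pm$ where $\pm D(\phi)>0$, the profile is classical and $\phi\in C^2(I_\pm)$, so $\mathcal V=D(\phi)\phi'$ is already continuous there; the only places continuity of $\mathcal V$ is in question are the (at most countably many, by monotonicity) values of $\xi$ where $\phi$ equals one of $0,\alpha,1$, i.e. where $D(\phi)$ can vanish.

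The main step is to show that $\mathcal V$ has a continuous representative on all of $\R$. I would do this by integration against test functions: from \eqref{e:def-tw}, for every $\psi\in C_0^\infty(I)$,
\[
\int_I \mathcal V(\xi)\,\psi'(\xi)\,d\xi = \int_I \Big( f(\phi(\xi)) - c\,\phi(\xi)\Big)\psi'(\xi)\,d\xi + \int_I g(\phi(\xi))\,\psi(\xi)\,d\xi .
\]
The right-hand side, integrating the first term by parts is not quite legitimate since $f(\phi)$ need only be $BV$, but one can instead observe directly that the distributional derivative of $\mathcal V$ equals $-\big(f(\phi)-c\phi\big)' + g(\phi)$ in $\mathcal D'(I)$; since $f(\phi)-c\phi$ is continuous (indeed $BV$) and $g(\phi)$ is continuous and bounded, $\mathcal V'$ is a (signed) Radon measure with no atoms — the only possible atoms would come from jumps of $f(\phi)-c\phi$, but $f$ is continuous and $\phi$ is continuous, so $f(\phi)-c\phi$ is continuous and has no jumps. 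Hence $\mathcal V$ coincides a.e. with a continuous function: explicitly, fixing a base point $\xi_*$ at which $\mathcal V$ is well-defined (e.g. in a region where $D(\phi)\ne 0$),
\[
\mathcal V(\xi) = \mathcal V(\xi_*) + \big(f(\phi(\xi))-f(\phi(\xi_*))\big) - c\big(\phi(\xi)-\phi(\xi_*)\big) + \int_{\xi_*}^{\xi} g(\phi(s))\,ds
\]
defines the continuous extension, using that $\phi$ and $f$ are continuous and $g(\phi)$ is locally integrable (in fact bounded). I expect this to be the heart of the argument and the step requiring the most care, because one has to justify that the weak equation really does control $\mathcal V'$ and that no singular part appears where $D(\phi)=0$.

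Finally, for \eqref{e:mathcalVzero}: near $\xi_1=\inf\{\xi:\phi(\xi)<1\}$, either $\xi_1=-\infty$ or $\xi_1\in\R$. In the first case $\phi\equiv 1$ so $\mathcal V\equiv 0$ for $\xi<$ any point, trivial. In the second, $\phi(\xi)\to1$ as $\xi\to\xi_1^+$, and by continuity $D(\phi(\xi))\to D(1)$. If $D(1)<0$ the profile is classical at $1$ and $\phi'(\xi)\to0$ (a non-increasing $C^1$ function meeting its maximum value $1$ from the right must have vanishing derivative there), so $\mathcal V(\xi)\to0$. If $D(1)=0$ then $D(\phi(\xi))\to0$; combined with the now-established continuity of $\mathcal V$ at $\xi_1$ and the fact that $\mathcal V=D(\phi)\phi'$ with $\phi'\le0$, one gets $\mathcal V(\xi_1)=\lim D(\phi(\xi))\phi'(\xi)$. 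To rule out a nonzero limit one uses the continuous-extension formula above together with $D(\phi(\xi))\to 0$: if $\mathcal V(\xi_1)=:v\ne 0$ then $\phi'(\xi)\sim v/D(\phi(\xi))$ blows up in a non-integrable way as $\xi\to\xi_1^+$ unless $v=0$, contradicting $\phi(\xi)\to1$ with $\phi$ finite; hence $v=0$. The argument at $\xi_0=\sup\{\xi:\phi(\xi)>0\}$ with $\phi\to0$ is entirely symmetric, using $D(0)\ge0$. This last part is routine once the continuous extension is in hand; the only subtlety is the degenerate subcases $D(0)=0$ or $D(1)=0$, handled as just sketched.
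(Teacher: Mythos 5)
Your continuity argument is sound and in fact takes a cleaner, more global route than the paper's: you read off from \eqref{e:def-tw} that $u:=\mathcal V-f(\phi)+c\phi$ has distributional derivative $-g(\phi)\in L^\infty_{\mathrm{loc}}$, so $u$ has a (locally Lipschitz, hence) continuous representative, and $\mathcal V=u+f(\phi)-c\phi$ inherits continuity from that of $\phi$ and $f$; there is no need to differentiate $f(\phi)$ or to discuss atoms at all. The paper instead localizes around each point $\zeta$ with $D(\phi(\zeta))=0$, uses that $\phi$ is classical on either side of $\zeta$, and matches the one-sided limits of $D(\phi)\phi'$ by integrating \eqref{e:def-tw} by parts against test functions supported near $\zeta$. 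Your displayed representation has a sign slip (it should read $\mathcal V(\xi)=\mathcal V(\xi_*)+f(\phi(\xi))-f(\phi(\xi_*))-c\left(\phi(\xi)-\phi(\xi_*)\right)-\int_{\xi_*}^{\xi}g(\phi(s))\,ds$), but this does not affect the continuity conclusion.

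The genuine gap is in the proof of \eqref{e:mathcalVzero}. You assert that if $\xi_1=-\infty$ then ``$\phi\equiv1$'' and the claim is trivial; this reverses the meaning of $\xi_1$. Since $\xi_1=\inf\{\xi:\phi(\xi)<1\}$, having $\xi_1=-\infty$ means $\phi<1$ on all of $\R$ and $\phi\to1$ only asymptotically --- the generic situation for a wavefront, and precisely the case that requires an argument, because one must prove $\mathcal V(\xi)\to0$ as $\xi\to-\infty$ rather than invoke continuity at a finite point. (Conversely, when $\xi_1\in\R$ the claim really is immediate: $\phi\equiv1$ and hence $\mathcal V\equiv0$ on $(-\infty,\xi_1)$, and the continuous extension does the rest; your case split on the sign of $D(1)$ is superfluous there.) The missing step, which the paper supplies, is: for $\xi$ near $-\infty$ one has $\phi(\xi)\in(\gamma,1)$, so $g(\phi)$ has constant sign and the term $\int_{\xi_*}^{\xi}g(\phi(s))\,ds$ in your representation is monotone in $\xi$; hence $\mathcal V$ admits a limit $\ell\in[-\infty,+\infty]$ as $\xi\to-\infty$, and if $\ell\neq0$ then $|\phi'|=|\mathcal V|/|D(\phi)|$ is bounded away from $0$ on a half-line, contradicting the boundedness of $\phi$. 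The same issue arises symmetrically when $\xi_0=+\infty$.
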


\begin{proof}
 Where $D(\phi) \neq 0$, solutions of \eqref{e:ODE} (in the sense of Definition \ref{d:tws}) are regular (see \cite[Lemma 2.20]{GK}). Because of this and since $\phi$ is supposed to be monotone (decreasing), then for every $\zeta \in \R$ such that $D\left(\phi (\zeta)\right)=0$, there always exists a neighborhood of $\zeta$ (say $(\zeta-\delta, \zeta +\delta)$, for some $\delta>0$) such that $\phi$ (restricted to $(\zeta-\delta, \zeta +\delta)$) may be not differentiable only at $\zeta$. With this in mind, take an arbitrary test function $\psi \in C_0^\infty(\R)$ with support in $(\zeta-\delta, \zeta +\delta)$. By separating the contributions in $\int_{\zeta-\delta}^{\zeta}$ and $\int_{\zeta}^{\zeta+\delta}$ and by applying integration by parts, we prove that
 \begin{equation}
  \label{e:continuityV}
 0=
  \int_{\zeta-\delta}^{\zeta+\delta} \left(D(\phi)\phi\rq{}+c\phi-f(\phi)\right)\psi\rq{}-g(\phi)\psi\,d\xi =  \left(D(\phi)\phi\rq{}\right)(\zeta^+)\psi(\zeta)-  \left(D(\phi)\phi\rq{}\right)(\zeta^-)\psi(\zeta).
 \end{equation}

It remains to prove \eqref{e:mathcalVzero}. We show $\eqref{e:mathcalVzero}_1$, the other runs similarly. If $\xi_1 \in \R$, then \eqref{e:mathcalVzero} follows from \eqref{e:continuityV}. If $\xi_1=-\infty$, by integrating \eqref{e:ODE} and since $g$ has constant sign near $1$ we deduce that $\mathcal{V}$ has limit for $\xi \to - \infty$. The boundedness of $\phi$ implies then $\eqref{e:mathcalVzero}_1$.
\end{proof}

In the case $\alpha \ge \gamma$, the existence of profiles implies the existence of solutions to the first-order problem
\begin{equation}
\label{e:problem0}
\begin{cases}
\dot{z}=h-c- q/z \ &\mbox{ in } \ (0,\alpha)\cup (\alpha,1),\\
z<0 \ &\mbox{ in } \ (0,\alpha),\\
z>0 \ &\mbox{ in } \ (\alpha,1),\\
z(0)=z(\alpha)=z(1)=0.
\end{cases}
\end{equation}
Solutions to \eqref{e:problem0} are sought in the class $C^0\left[0,1\right]\cap C^1\left(\left(0,1\right)\setminus\{\alpha\}\right) $. In the case $\alpha<\gamma$ the reduction depends also on the behaviour of $g$ near $\gamma$. If $g$ is regular enough then the existence of the profiles implies the solvability of \eqref{e:problem0}, again. Otherwise, the existence of the profiles imply only the solvability of the following problem
\begin{equation}
\label{e:problem0prime}
\begin{cases}
\dot{z}=h-c- q/z \ &\mbox{ in } \ (0,\alpha)\cup (\alpha,\gamma)\cup (\gamma,1),\\
z<0 \ &\mbox{ in } \ (0,\alpha),\\
z>0 \ &\mbox{ in } \ (\alpha,\gamma)\cup (\gamma,1),\\
z(0)=z(\alpha)=z(1)=0.
\end{cases}
\end{equation}
where $z \in  C^0[0,1] \cap C^1\left(\left(0,1\right)\setminus\{\alpha,\gamma\}\right)$. Note, Problem \eqref{e:problem0} is \eqref{e:problem0prime} with the additional requirement $z(\gamma)\neq 0$.

\begin{proposition}
\label{prop:nec}
Assume that there exists a wavefront of \eqref{e:E} with profile $\phi$ satisfying \eqref{e:infty}, with speed $c$. Then,  $\phi\rq{}(\xi)<0$ for every $\xi \in\R$ such that $\phi(\xi)\neq 0,1, \alpha, \gamma$. Moreover, we have:
\begin{enumerate}[(i)]
\item
When $\alpha \ge \gamma$, there exists a solution of \eqref{e:problem0}.
\item
When $\alpha<\gamma$, there exists a solution of \eqref{e:problem0prime}; if \eqref{g sublinear gamma} holds, then $z(\gamma)>0$ and hence $z$ satisfies \eqref{e:problem0}.
\end{enumerate}
\par\noindent
Moreover, when either $\alpha > \gamma$ or both $\alpha <\gamma$ and \eqref{g sublinear gamma} hold, then there exists a unique $\xi_\gamma\in\R$ such that $\phi(\xi_\gamma)=\gamma$, and it holds $\phi\rq{}(\xi_\gamma)<0$.
\end{proposition}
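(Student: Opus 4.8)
\emph{Proof strategy.} The plan has three parts: first I would strengthen the monotonicity of $\phi$ to $\phi'<0$ away from the four levels $0,\alpha,\gamma,1$; then pass from the profile equation \eqref{e:ODE} to the first–order problem via the substitution $z(\rho):=\mathcal{V}(\xi)$ along $\rho=\phi(\xi)$, where $\mathcal{V}=D(\phi)\phi'$ is the continuous function of Lemma \ref{lem:def}; and finally carry out the delicate analysis of $z$ at the level $\gamma$, which is where the growth condition \eqref{g sublinear gamma} comes in.

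\emph{Strict monotonicity.} Whenever $\phi(\xi)=\bar\rho\notin\{0,\alpha,1\}$ one has $D(\bar\rho)\neq0$, so $\phi$ is $C^2$ near $\xi$ and solves \eqref{e:ODE} classically there (\cite[Lemma 2.20]{GK}). If in addition $\bar\rho\neq\gamma$ and $\phi'(\xi)=0$, then $\mathcal{V}(\xi)=D(\bar\rho)\phi'(\xi)=0$ while $\mathcal{V}'(\xi)=-g(\bar\rho)\neq0$, so $\mathcal{V}=D(\phi)\phi'$ changes sign at $\xi$; this is impossible, since $D(\phi)$ has a fixed sign near $\xi$ and $\phi'\le0$ by monotonicity. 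Hence $\phi'<0$ whenever $\phi\notin\{0,\alpha,\gamma,1\}$.

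\emph{Reduction.} By Lemma \ref{lem:def}, $\mathcal{V}$ is continuous on $\R$. Since $\phi$ is continuous with limits $1$ and $0$, it takes every value in $(0,1)$; for $\rho\in[0,1]$ I put $z(\rho):=\mathcal{V}(\xi)$ for any $\xi\in\phi^{-1}(\rho)$ (a one–sided limit if $\rho\in\{0,1\}$ is not attained). This is well posed because $\mathcal{V}\equiv0$ on any interval on which $\phi$ is constant. On the open level sets where $\phi\notin\{0,\alpha,\gamma,1\}$, strict monotonicity makes $\phi$ a decreasing $C^2$ map with $C^1$ inverse $\xi=\xi(\rho)$, and inserting $\phi'=z/D$ into \eqref{e:ODE} gives $\dot z=h-c-q/z$ with $q:=Dg$; the sign of $D$ then yields $z<0$ on $(0,\alpha)$ and $z>0$ on $(\alpha,1)$. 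Moreover $z\in C^0[0,1]$ with $z(0)=z(1)=0$ by \eqref{e:mathcalVzero}, and $z(\alpha)=0$ because $\mathcal{V}=D(\phi)\phi'$ is $\ge0$ on the side $\phi>\alpha$ and $\le0$ on the side $\phi<\alpha$, so its continuous value at $\phi=\alpha$ must vanish. This already proves the statement when $\alpha=\gamma$, and when $\alpha<\gamma$ it yields \eqref{e:problem0prime} with $z(\gamma)\ge0$.

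\emph{Behaviour at $\gamma$, and conclusion.} This is the step I expect to be the main obstacle. Let $\alpha\neq\gamma$, so $D(\gamma)\neq0$ and $\phi$ is $C^2$ near any $\xi_\gamma$ with $\phi(\xi_\gamma)=\gamma$; then $z(\gamma)=0$ forces $\phi'(\xi_\gamma)=0$. Assume this, choose $\xi_\gamma$ extremal on the relevant side (leftmost if we argue to its left, rightmost if to its right), integrate \eqref{e:ODE} between $\xi$ and $\xi_\gamma$, and change variables in the drift integral; for $w:=|\phi-\gamma|$, which is positive with $|w'|>0$ on the corresponding one–sided neighbourhood of $\xi_\gamma$ by strict monotonicity, this yields an estimate of the form $|w'(\xi)|\le A\,w(\xi)+B\int_{I_\xi}w$ (with $I_\xi$ the interval between $\xi$ and $\xi_\gamma$, $A,B\ge0$), whence a Gronwall–type argument forces $w\equiv0$ near $\xi_\gamma$, a contradiction. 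If $\alpha>\gamma$ this estimate is automatic on either side: there $D>0$, so $\mathcal{V}=D(\phi)\phi'<0$, while $g(\phi)$ keeps the sign opposite to $\mathcal{V}$ on the integrated identity, which already bounds $\int|g(\phi)|$ by $w$. If $\alpha<\gamma$ the sign of $D$ is reversed and that bound must instead be supplied by \eqref{g sublinear gamma}, used on whichever (left or right) neighbourhood of $\gamma$ it holds, the other side being symmetric; without \eqref{g sublinear gamma} the step genuinely fails, consistently with the occurrence of plateaus at $\gamma$ in that case. Therefore $z(\gamma)\neq0$: precisely $z(\gamma)<0$ if $\alpha>\gamma$, and $z(\gamma)>0$ if $\alpha<\gamma$ and \eqref{g sublinear gamma} holds (the sign inherited from the nearby values of $z$). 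Since $q(\gamma)=0$ and $z(\gamma)\neq0$, $q/z$ and hence $\dot z$ are continuous at $\gamma$, so $z$ is $C^1$ at $\gamma$ and solves \eqref{e:problem0}. Finally, in these cases $\phi'(\xi_\gamma)=z(\gamma)/D(\gamma)<0$ and $\phi\in C^2$ near $\xi_\gamma$, hence $\phi'<0$ in a whole neighbourhood and $\phi^{-1}(\gamma)$ reduces to the single point $\xi_\gamma$.
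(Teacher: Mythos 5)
Your proof is correct, and while the strict-monotonicity step and the basic reduction $z=\mathcal{V}\circ\phi^{-1}$ coincide with the paper's, your treatment of the key point $z(\gamma)\neq 0$ takes a genuinely different route. The paper works in the $\phi$-variable: for $\alpha>\gamma$ it shows that $z(\gamma)=0$ would force $z_1(\phi)>\Delta(\phi-\gamma)$ with $\Delta=\max_{[0,\gamma]}h-c>0$, whence $\int^{\gamma}D/z_1$ diverges and the profile would need infinite $\xi$-time to reach the level $\gamma$, contradicting $\xi_\gamma^1>\xi_\alpha\in\R$; for $\alpha<\gamma$ it truncates the profile at $\gamma$ and invokes the strict-monotonicity criterion of \cite[Theorem 2.9]{CM-DPDE} under \eqref{g sublinear gamma}. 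You instead work in the $\xi$-variable with a single Gronwall scheme on $w=|\phi-\gamma|$: integrating \eqref{e:ODE} from the extremal $\xi_\gamma$ (where $\mathcal{V}=0$) gives $|D(\phi)\phi'|\le A\,w+\bigl|\int g(\phi)\,ds\bigr|$, the $g$-integral being controlled either by its favorable sign relative to $\mathcal{V}$ (when $\alpha>\gamma$, where no extra hypothesis on $g$ is needed) or by \eqref{g sublinear gamma} (when $\alpha<\gamma$, on whichever side the condition holds), and $w(\xi_\gamma)=0$ then forces $w\equiv0$ nearby, contradicting strict monotonicity. This buys a self-contained, unified argument that avoids the external citation and makes transparent why \eqref{g sublinear gamma} is needed precisely when $\alpha<\gamma$; the paper's version, on the other hand, produces the quantitative lower bound $z_1(\phi)>\Delta(\phi-\gamma)$, which is closer in spirit to the first-order machinery of Section \ref{s:first order}. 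The remaining points (well-posedness of $z$ across plateaus, $z(\alpha)=0$ from the sign change of $\mathcal{V}$ across the level $\alpha$, $C^1$-regularity of $z$ at $\gamma$ once $z(\gamma)\neq0$, and uniqueness of $\xi_\gamma$ with $\phi'(\xi_\gamma)=z(\gamma)/D(\gamma)<0$) are handled correctly and essentially as in the paper.
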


%\begin{proposition}
%\label{lem:0}
%Consider equation \eqref{e:E} and, in the case $\alpha<\gamma$, assume \footnote{A.: " it exists (finite) either $\dot{g}(\gamma^-)$ or $\dot{g}(\gamma^+)$.}
%\begin{equation}
%\label{e:dotq} \mbox{ it exists (finite) } \dot{g}(\gamma^-).\footnote{D.: Vedere Example \ref{other}.
%}
%\footnote{D.: In effetti, per concludere come nel Corollario \ref{cor:nec for phi} non occorre questa condizione. Questa implica che $z(\gamma)\neq 0$. Senza questa condizione avremmo comunque che esiste $z$ che soddisfa \eqref{e:problem0} tranne in $\gamma$. Questa informazione piu\rq{} debole della \eqref{e:problem0} implica comunque che debbano verificarsi le condizioni sulle sotto-soglie come nel Corollario \ref{cor:nec for phi}. Se $\dot g(\gamma)=+\infty$, in $\gamma$, puo\rq{} verificarsi sia $z(\gamma)=0$ (vedi Esempio \ref{other}) sia $z(\gamma)>0$ (vedi Esempi \ref{rem:example3} - \ref{other2}.)}
%\end{equation}
%Then, for every wavefront of \eqref{e:E} with wave speed $c\in \R$, there exists a solution of  \eqref{e:problem0} associated with the same $c$.
%\end{proposition}

\begin{proof}
Assume that \eqref{e:E} admits a wavefront with profile $\phi$ satisfying \eqref{e:ODE}-\eqref{e:infty} and wave speed $c\in \R$. Since $\phi$ is assumed to be monotone decreasing and $\phi$ is a classical solution of \eqref{e:ODE} where $D\neq 0$ (see \cite[Lemma 2.20]{GK}), we have that $\phi\rq{}\le 0$ if  $\phi \notin \{0,1,\alpha\}$. We show that necessarily it holds
\begin{equation}\label{e:Messi1}
\phi\rq{}< 0 \ \mbox{ if } \ \phi \notin \{0,1,\alpha,\gamma\}.
\end{equation}
 Suppose, by contradiction, that  $\phi\rq{}(\xi^*_0)=0$ for some $\xi^*_0 \in\R$ with $\phi_0^*:=\phi(\xi_0^*)\notin \{0,1,\alpha,\gamma\}$. We consider the case $D(\phi_0^*)>0$ and $g(\phi_0^*)>0$; the other cases run similarly. Since $D\left(\phi_0^*\right)>0$, we have that $\phi$ is classical in a neighborhood of $\xi_0^*$. Hence $\mathcal{V}$ (as in \eqref{e:mathcalV}) belongs to $C^1$, in a neighborhood of $\xi_0^*$, and satisfies
\[
\mathcal{V}(\xi^*_0)=0 \ \mbox{ and }  \ {\mathcal{V}}\rq{}(\xi_0^*)=-g(\phi_0^*)<0.
\]
Hence, $\mathcal{V}>0$ in $(\xi_0^*-\delta, \xi_0^*)$, for some $\delta>0$ small enough to also have $D(\phi(\xi))>0$, for every $\xi\in (\xi_0^*-\delta, \xi_0^*)$. Hence, $\phi\rq{}>0$ in $(\xi_0^*-\delta, \xi_0^*)$. This is in contradiction with $\phi\rq{}\le 0$ if  $\phi \notin \{0,1,\alpha\}$  and then proves \eqref{e:Messi1}.

We separate the cases $\alpha >\gamma$, $\alpha=\gamma$ and $\alpha<\gamma$ and refer to Figure \ref{f:phimanyxi}

%%%%%%%%%%%%%%%%
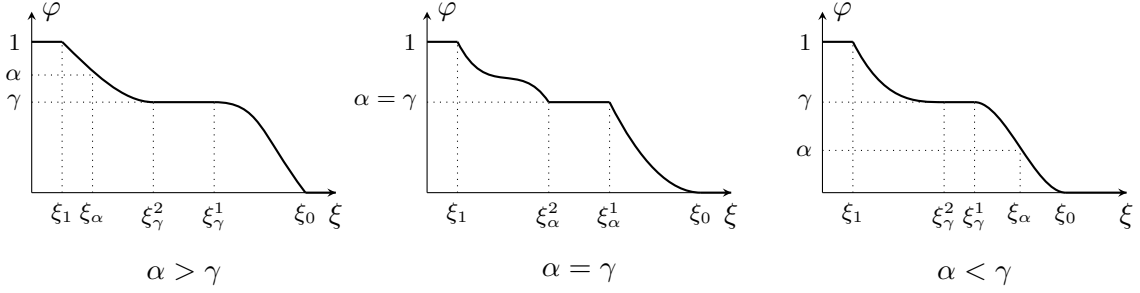
\begin{figure}[htb]
\begin{center}

\begin{tikzpicture}[>=stealth, scale=0.4]
%axes
\draw[->] (0,0) --  (1,0) node[below]{$\xi$} coordinate (x axis);
\draw (-9,0) -- (0,0);
\draw[->] (-9,0) -- (-9,6) node[right]{$\phi$} coordinate (y axis);
%\draw (-9,5) -- (4,5);
%
\draw[thick] (-9,5) node[left]{\footnotesize$1$}-- (-8,5);
\draw[dotted] (-9,3.9) node[left]{\footnotesize$\alpha$} -- (-7,3.9);
\draw[dotted] (-7,0) node[below]{\footnotesize$\xi_\alpha$} -- (-7,3.9);
\draw[dotted] (-8,0) node[below]{\footnotesize$\xi_1$} -- (-8,5);
\draw[thick] (-8,5) .. controls (-7,4) and (-6,3) .. (-5,3);
\draw[dotted] (-9,3) node[left]{\footnotesize$\gamma$} -- (-5,3);
\draw[thick] (-5,3) -- (-3,3);
\draw[dotted] (-5,0) node[below]{\footnotesize$\xi_\gamma^2$} -- (-5,3);
\draw[dotted] (-3,0) node[below]{\footnotesize$\xi_\gamma^1$} -- (-3,3);
\draw[thick] (-3,3) .. controls (-1.5,3) and (-1.5,2) .. (0,0);
\draw[thick] (0,0) node[below]{\footnotesize$\xi_0$}-- (1,0);
\draw (-4,-2) node[below]{$\alpha>\gamma$};

%(b)
\begin{scope}[xshift=13cm]
%axes
\draw[->] (0,0) --  (1,0) node[below]{$\xi$} coordinate (x axis);
\draw (-9,0) -- (0,0);
\draw[->] (-9,0) -- (-9,6) node[right]{$\phi$} coordinate (y axis);
\draw[thick] (-9,5) node[left]{\footnotesize$1$}-- (-8,5);
\draw[dotted] (-8,0) node[below]{\footnotesize$\xi_1$} -- (-8,5);
\draw[thick] (-8,5) .. controls (-7,3) and (-6,4.5) .. (-5,3);
\draw[dotted] (-9,3) node[left]{\footnotesize$\alpha=\gamma$} -- (-5,3);
\draw[thick] (-5,3) -- (-3,3);
\draw[dotted] (-5,0) node[below]{\footnotesize$\xi_\alpha^2$} -- (-5,3);
\draw[dotted] (-3,0) node[below]{\footnotesize$\xi_\alpha^1$} -- (-3,3);
\draw[thick] (-3,3) .. controls (-2,1) and (-1,0) .. (0,0);
\draw[thick] (0,0) node[below]{\footnotesize$\xi_0$}-- (1,0);
\draw (-4,-2) node[below]{$\alpha=\gamma$};
\end{scope}

%(c)
\begin{scope}[xshift=26cm]
%axes
\draw[->] (0,0) --  (1,0) node[below]{$\xi$} coordinate (x axis);
\draw (-9,0) -- (0,0);
\draw[->] (-9,0) -- (-9,6) node[right]{$\phi$} coordinate (y axis);
\draw[thick] (-9,5) node[left]{\footnotesize$1$}-- (-8,5);
\draw[dotted] (-8,0) node[below]{\footnotesize$\xi_1$} -- (-8,5);
\draw[thick] (-8,5) .. controls (-7,3) and (-6,3) .. (-5,3);
\draw[dotted] (-9,3) node[left]{\footnotesize$\gamma$} -- (-5,3);
\draw[thick] (-5,3) -- (-4,3);
\draw[dotted] (-5,0) node[below]{\footnotesize$\xi_\gamma^2$} -- (-5,3);
\draw[dotted] (-4,0) node[below]{\footnotesize$\xi_\gamma^1$} -- (-4,3);
\draw[thick] (-4,3) .. controls (-3,3) and (-2,0) .. (-1,0);
\draw[dotted] (-9,1.4) node[left]{\footnotesize$\alpha$}-- (-2.5,1.4);
\draw[dotted] (-2.5,0) node[below]{\footnotesize$\xi_\alpha$} -- (-2.5,1.5);
\draw[thick] (-1,0) node[below]{\footnotesize$\xi_0$}-- (1,0);
\draw (-4,-2) node[below]{$\alpha<\gamma$};
\end{scope}
\end{tikzpicture}
\end{center}
\caption{\label{f:phimanyxi}{The profiles. In the case $\alpha>\gamma$, we show that $\xi_\gamma^2=\xi_\gamma^1$. In the case $\alpha<\gamma$, we show $\xi_\gamma^2=\xi_\gamma^1$ under \eqref{g sublinear gamma}.}}
\end{figure}
%%%%%%%%%%%%%%%%%%%%%%%%%%%%%%%%%%%%%%%%%%%%%%%

{\em Assume $\alpha >\gamma$}. From \eqref{e:Messi1} we deduce that $\phi$ is invertible in each of the three intervals given by $\phi \in (0,\gamma)$, $(\gamma,\alpha)$ and $(\alpha,1)$. We define then
\begin{gather*}
\xi_1:=\inf\{\xi: \phi(\xi)<1\},\quad \xi_0:=\sup\{\xi: \phi(\xi)>0\},\\
\xi_\alpha:=\mbox{the unique $\xi$ such that $\phi(\xi)=\alpha$},\\
\xi_\gamma^2:=\sup\{\xi: \phi(\xi)>\gamma\},\quad \xi_\gamma^1:=\inf\{\xi: \phi(\xi)<\gamma\},
\end{gather*}
so that $-\infty \le \xi_1 < \xi_\alpha< \xi_\gamma^2 \le \xi_\gamma^1  < \xi_0\le \infty$.

%%%%%%%%%%%%%%%%%
%\begin{figure}[htb]
%\begin{center}
%
%\begin{tikzpicture}[>=stealth, scale=0.4]
%%axes
%\draw[->] (0,0) --  (10,0) node[below]{$\xi$} coordinate (x axis);
%\draw (-10,0) -- (0,0);
%\draw[->] (-9,0) -- (-9,6) node[right]{$\phi$} coordinate (y axis);
%\draw (-10,5) -- (10,5);
%%
%\draw[thick] (-10,5) node[above]{$1$}-- (-6,5);
%\draw[dotted] (-9,3.9) node[left]{$\alpha$} -- (-4,3.9);
%\draw[dotted] (-4,0) node[below]{$\xi_\alpha$} -- (-4,3.9);
%\draw[dotted] (-6,0) node[below]{$\xi_1$} -- (-6,5);
%\draw[thick] (-6,5) .. controls (-4,4) and (-3,3) .. (-1,3);
%\draw[dotted] (-9,3) node[left]{$\gamma$} -- (-1,3);
%\draw[thick] (-1,3) -- (3,3);
%\draw[dotted] (-1,0) node[below]{$\xi_\gamma^2$} -- (-1,3);
%\draw[dotted] (3,0) node[below]{$\xi_\gamma^1$} -- (3,3);
%\draw[thick] (3,3) .. controls (4.5,3) and (4.5,2) .. (6,0);
%\draw[thick] (6,0) node[below]{$\xi_0$}-- (10,0);
%\end{tikzpicture}
%\end{center}
%\caption{\label{f:phimanyxi}{A profile $\phi$ in the case $\alpha>\gamma$. Indeed, in (c) we shall show that $\xi_\gamma^2=\xi_\gamma^1$.}}
%\end{figure}
%%%%%%%%%%%%%%%%%%%%%%%%%%%%%%%%%%%%%%%%%%%%%%%%

Observe, $\xi_\alpha$ is unique since otherwise $\phi$ must be constantly equal to $\alpha$ in an interval and this contradicts \eqref{e:ODE} because of $g(\alpha)\neq 0$. We can define three functions $\zeta_1:(0,\gamma)\to (\xi_\gamma^1,\xi_0)$, $\zeta_2:(\gamma,\alpha)\to (\xi_\alpha, \xi_\gamma^2)$ and $\zeta_3:(\alpha,1)\to (\xi_1, \xi_\alpha)$, such that $\zeta_i=\phi^{-1}$ in each corresponding interval. Also, we define
\begin{equation}
\label{e:zeta1}
z_1(\phi):=D(\phi)\phi\rq{}\left(\zeta_1(\phi)\right), \ \phi \in (0,\gamma),
\end{equation}
and, analogously, $z_2(\phi):= D(\phi) \phi\rq{}\left(\zeta_2(\phi)\right)$, for $\phi \in (\gamma, \alpha)$, and $z_3(\phi):=D(\phi)\phi\rq{}\left(\zeta_3(\phi)\right)$, for $\phi \in (\alpha,1)$.
It follows from their definitions that $z_1,z_2,z_3$ are of class $C^1$ and (from \eqref{e:ODE}) that they solve \begin{equation}
\label{e:dot z i}
 \dot{z_i}(\phi) =h(\phi)-c-\frac{D(\phi)g(\phi)}{z_i(\phi)}, \ \mbox{ for } \ i=1,2,3,
 \end{equation}
where they are defined.

Since $\mathcal{V}$ in \eqref{e:mathcalV} is continuous in $\R$, we have $z_1(\gamma)=z_2(\gamma)$, $z_2(\alpha)=z_3(\alpha)$, and we can define a function $z=z(\phi)$, for $\phi \in [0,1]$, by gluing together the continuous extensions of $z_1,z_2,z_3$. Moreover, $z \in C^0[0,1]$ and $z \in C^1$ in $(0,\gamma)\cup (\gamma,\alpha)\cup (\alpha,1)$.

Since $\mathcal{V}$ satisfies \eqref{e:mathcalVzero}, then $z(0)=z(1)=0$. By \eqref{e:Messi1} we have $z<0$ in $(0,\gamma)\cup(\gamma,\alpha)$ and $z>0$ in $(\alpha,1)$. Moreover, $z(\alpha)=0$. This proves that $z$ satisfies \eqref{e:problem0prime}.  To show that $z$ is a solution of \eqref{e:problem0} it remains to prove that $z_1(\gamma)<0$ and that $z$ is of class $C^1$ at $\gamma$. To this end, suppose by contradiction that $z_1(\gamma)=0$. Since $\dot{z_1}=h-c-Dg/z_1$, and $Dg <0$ in $(0,\gamma)$, we have
\[
z_1(\gamma)-z_1(\phi) =\int_{\phi}^{\gamma}\dot{z_1}(\sigma)\,d\sigma<\int_{\phi}^{\gamma}\left[h(\sigma)-c\right]\,d\sigma \le \bigl(\max_{[0,\gamma]} h-c\bigr)\left(\gamma-\phi\right), \quad \phi\in(0,\gamma).
\]
 Thus, with $\Delta:=\max_{[0,\gamma]} h-c$,
 \begin{equation}
 \label{z1}
 z_1(\phi) > \Delta \left(\phi-\gamma\right), \ \mbox{ for } \ \phi < \gamma.
 \end{equation}
 Observe that $\Delta>0$, since $\Delta > \frac{-z_1(\phi)}{\gamma-\phi}>0$, for every $\phi \in (0,\gamma)$.
 From \eqref{z1}, $D(\gamma) >0 $ and
 \[
\zeta_1(\gamma) - \zeta_1(\delta)=\int_{\delta}^{\gamma} \dot{\zeta_1}(\sigma)\,d\sigma= \int_{\delta}^{\gamma} \frac{D(\sigma)}{z_1(\sigma)}\,d\sigma,
\]
for $\delta\in(0,\gamma)$, we obtain, for $\delta$ close to $\gamma$,
\begin{equation}\label{e:Pirlo}
\xi_\gamma^1=\zeta_1(\gamma) < \zeta_1(\delta) + \int_{\delta}^{\gamma} \frac{D(\gamma)+o(1)}{\Delta \left(\sigma -\gamma\right)}\,d\sigma=-\infty.
\end{equation}
This contradicts $\xi_\gamma^1 > \xi_\alpha$. Then, we proved that $z_1(\gamma)<0$ (and so $z(\gamma)$). Moreover, observe that $z(\gamma) < 0$ implies $\dot z_1(\gamma)=\dot z_2 (\gamma)=h(\gamma)-c$ from \eqref{e:dot z i}. Hence, $z$ is a solution of \eqref{e:problem0}.

\smallskip

{\em Assume $\alpha=\gamma$}. In this case, we have two functions $\zeta_1:(0,\alpha)\to (\xi_\alpha^1, \xi_0)$ and $\zeta_2:(\alpha,1)\to (\xi_1, \xi_\alpha^2)$ such that $\zeta_i=\phi^{-1}$ as above.

%%%%%%%%%%%%%%%%%
%\begin{figure}[htb]
%\begin{center}
%
%\begin{tikzpicture}[>=stealth, scale=0.4]
%%axes
%\draw[->] (0,0) --  (10,0) node[below]{$\xi$} coordinate (x axis);
%\draw (-10,0) -- (0,0);
%\draw[->] (-9,0) -- (-9,6) node[right]{$\phi$} coordinate (y axis);
%\draw (-10,5) -- (10,5);
%%
%\draw[thick] (-10,5) node[above]{$1$}-- (-6,5);
%\draw[dotted] (-6,0) node[below]{$\xi_1$} -- (-6,5);
%\draw[thick] (-6,5) .. controls (-4,3) and (-3,4.5) .. (-1,3);
%\draw[dotted] (-9,3) node[left]{$\alpha=\gamma$} -- (-1,3);
%\draw[thick] (-1,3) -- (3,3);
%\draw[dotted] (-1,0) node[below]{$\xi_\alpha^2$} -- (-1,3);
%\draw[dotted] (3,0) node[below]{$\xi_\alpha^1$} -- (3,3);
%\draw[thick] (3,3) .. controls (4,1) and (5,0) .. (6,0);
%\draw[thick] (6,0) node[below]{$\xi_0$}-- (10,0);
%\end{tikzpicture}
%\end{center}
%\caption{\label{f:phimanyxi2}{A profile $\phi$ in the case $\alpha=\gamma$.}}
%\end{figure}
%%%%%%%%%%%%%%%%%%%%%%%%%%%%%%%%%%%%%%%%%%%%%%%%%%%%%%

Consequently, define $z_1(\phi):=D(\phi)\phi\rq{}\left(\zeta_1(\phi)\right)$, $\phi \in (0,\alpha)$, and $ z_2(\phi):= D(\phi) \phi\rq{}\left(\zeta_2(\phi)\right)$, $\phi \in (\alpha, 1)$, which realize \eqref{e:problem0} in $(0,\alpha)$ and $(\alpha,1)$, respectively. In this case, again by the continuity of $\mathcal V$, we have directly that the function $z$ obtained by pasting together $z_1$ and $z_2$ is a solution of \eqref{e:problem0}. Hence, we proved $(i)$.

\smallskip

{\em Assume $\alpha < \gamma$}. In this case, we have
$-\infty \le \xi_1 < \xi_\gamma^2 \le \xi_\gamma^1< \xi_\alpha  < \xi_0\le \infty$.
%
%%%%%%%%%%%%%%%%%
%\begin{figure}[htb]
%\begin{center}
%
%\begin{tikzpicture}[>=stealth, scale=0.4]
%%axes
%\draw[->] (0,0) --  (10,0) node[below]{$\xi$} coordinate (x axis);
%\draw (-10,0) -- (0,0);
%\draw[->] (-9,0) -- (-9,6) node[right]{$\phi$} coordinate (y axis);
%\draw (-10,5) -- (10,5);
%%
%\draw[thick] (-10,5) node[above]{$1$}-- (-6,5);
%\draw[dotted] (-6,0) node[below]{$\xi_1$} -- (-6,5);
%\draw[thick] (-6,5) .. controls (-4,3) and (-3,3) .. (-2,3);
%\draw[dotted] (-9,3) node[left]{$\gamma$} -- (-2,3);
%\draw[thick] (-2,3) -- (3,3);
%\draw[dotted] (-2,0) node[below]{$\xi_\gamma^2$} -- (-2,3);
%\draw[dotted] (3,0) node[below]{$\xi_\gamma^1$} -- (3,3);
%\draw[thick] (3,3) .. controls (4,3) and (5,0) .. (6,0);
%\draw[dotted] (-9,1.4) node[left]{$\alpha$}-- (4.5,1.4);
%\draw[dotted] (4.5,0) node[below]{$\xi_\alpha$} -- (4.5,1.5);
%\draw[thick] (6,0) node[below]{$\xi_0$}-- (10,0);
%\end{tikzpicture}
%\end{center}
%\caption{\label{f:phimanyxi3}{A profile $\phi$ in the case $\alpha<\gamma$. We show in (c) that $\xi_\gamma^2=\xi_\gamma^1$ under $\dot g(\gamma)<+\infty$.}}
%\end{figure}
%%%%%%%%%%%%%%%%%%%%%%%%%%%%%%%%%%%%%%%%%%%%%%%%%%%

Thus, we define three functions $\zeta_1:(0,\alpha)\to (\xi_\alpha,\xi_0)$, $\zeta_2:(\alpha,\gamma)\to (\xi_\gamma^1, \xi_\alpha)$ and $\zeta_3:(\gamma,1)\to (\xi_1, \xi_\gamma^2)$ such that $\zeta_i=\phi^{-1}$, as above. Let $z_i$, for $i=1,2,3$, be defined by
\begin{align*}
\label{zetaaa}
&z_1(\phi):=D(\phi)\phi\rq{}\left(\zeta_1(\phi)\right), \ \phi \in (0,\alpha),\\
& z_2(\phi):= D(\phi) \phi\rq{}\left(\zeta_2(\phi)\right), \ \phi \in (\alpha,\gamma),\\
& z_3(\phi):=D(\phi)\phi\rq{}\left(\zeta_3(\phi)\right), \ \phi \in (\gamma,1).
\end{align*}

Analogously to the case $\alpha>\gamma$, we can paste together in $[0,1]$ the continuous extensions of $z_1,z_2,z_3$ to obtain a function $z \in C^0[0,1]$ which satisfies $\eqref{e:problem0}_1$ in $(0,\alpha)\cup (\alpha,\gamma)\cup (\gamma,1)$, $\eqref{e:problem0}_2$ and $\eqref{e:problem0}_4$. Hence, $z$ satisfies \eqref{e:problem0prime}.
\par
Assume now \eqref{g sublinear gamma}. To show that $z$ satisfies \eqref{e:problem0}, it remains to prove that $z(\gamma)>0$ and that $z$ is $C^1$ in $\gamma$. To this end, assume by contradiction $z(\gamma)=0$, which is $z_2(\gamma)=z_3(\gamma)=0$. Since $D(\gamma)\neq 0$, this means that $\phi\rq{}({\xi_\gamma^1}^+)=\phi\rq{}({\xi_\gamma^2}^-) =0$. Focus on $\phi \in (\alpha, \gamma)$. The function $\psi=\psi(\xi)$ defined by
\[ \psi(\xi):=\begin{cases} \phi(\xi) \ \mbox{ for } \xi \in (\xi_\gamma^1, \xi_\alpha),\\ \gamma \ \mbox{ for } \ \xi \in (-\infty,\xi_\gamma^1],\end{cases}\]
is the profile of a strict semi-wavefront from $\gamma$, connecting $\gamma$ to $\alpha$, for the equation
\begin{equation} \label{e:E2}\rho_t - f(\rho)_x = \left(-D(\rho)\rho_x\right)_x-g(\rho)\end{equation}
 restricted to the interval $\rho \in [\alpha,\gamma]$ and with speed $-c$. Since $-D>0$ in $(0,\gamma]$ and $-g>0$ in $[0,\gamma)$ with $-g(\gamma)=0$, Equation \eqref{e:E2} fits with \cite[Theorem 2.9]{CM-DPDE}. Condition \eqref{g sublinear gamma} (in fact only its left-side part suffices) and \cite[Theorem 2.9]{CM-DPDE} imply that $\psi$ must be necessarily strictly monotone at $\gamma$, which is clearly a contradiction. Assuming \eqref{g sublinear gamma} in a right-hand neighborhood of $\gamma$, similar arguments lead to a contradiction as well.

Hence, as in the case $\alpha > \gamma$, the fact that $z(\gamma)\neq 0$ and the continuity of $\mathcal{V}$ imply that there exists a unique $\xi_\gamma$ such that $\phi(\xi_\gamma)=\gamma$. This fact, as in the case $\alpha>\gamma$, implies that $\dot z(\gamma^\pm)=h(\gamma)-c$ and in turn that $z \in C^1(0,\alpha)\cup (\alpha,1)$. This concludes the proof of Part $(ii)$.
\par
The last part of the statement regarding $\xi_\gamma$ follows from the regularity of $\phi$ and the fact that $z(\gamma)/D(\gamma)<0$.
\end{proof}

The following result is essentially the converse of Proposition \ref{prop:nec}.

\begin{proposition}
\label{prop:suff}
For $c\in\R$, assume one of the following:
\begin{enumerate}[(a)]
\item there exists a solution of \eqref{e:problem0} and either
\begin{enumerate}[(i)]
\item  $\alpha \neq \gamma$, or

\item  $\alpha =\gamma$, and \eqref{integrability g} holds;
 \end{enumerate}
 \item there exists a solution of \eqref{e:problem0prime} with $z(\gamma)=0$, $\alpha<\gamma$, and \eqref{integrability g} holds.
 \end{enumerate}
 Then, Equation  \eqref{e:E} admits a wavefront associated to $c$ whose profile $\phi$ satisfies \eqref{e:infty}.
\end{proposition}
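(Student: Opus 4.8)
The plan is to reverse the construction carried out in the proof of Proposition~\ref{prop:nec}. Given a solution $z$ of the first-order problem, one recovers a candidate profile from the relation $D(\phi)\phi'=z(\phi)$, equivalently by inverting the primitive $\zeta(\phi)=\int D(s)/z(s)\,ds$: on each of the open subintervals of $(0,1)$ cut out by the nodes $\{0,\alpha,\gamma,1\}$ of \eqref{e:problem0}--\eqref{e:problem0prime}, $z$ and $D$ are nonzero with opposite signs, so $s\mapsto D(s)/z(s)$ is continuous and strictly negative there, and, after fixing a base point, $\zeta$ is a well-defined strictly decreasing function of class $C^1$ on that subinterval; its inverse $\phi=\zeta^{-1}$ satisfies $D(\phi)\phi'=z(\phi)$ and, after one more differentiation using \eqref{e:dot z i}, solves \eqref{e:ODE} classically on the corresponding $\xi$-interval. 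It then remains to splice these branches into a single function defined on all of $\R$, monotone, non-constant, and connecting $1$ to $0$, and to check that the spliced function satisfies the weak identity \eqref{e:def-tw}. The latter is immediate at every junction: by exactly the computation \eqref{e:continuityV} of Lemma~\ref{lem:def} it suffices that $D(\phi)\phi'$ extend continuously, and by construction the one-sided limits of $D(\phi)\phi'=z(\phi)$ at any node $\phi_0$ both equal $z(\phi_0)$.

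Thus everything comes down to the behavior of $\int D/z$ at the nodes. At the node $\alpha$ (when $\alpha\neq\gamma$) we have $D(\alpha)=z(\alpha)=0$ but $g(\alpha)\neq0$, and inspecting \eqref{e:dot z i} shows that $z$ cannot vanish at $\alpha$ faster than $D$ does, so $D/z$ stays bounded there and the two adjacent branches meet at a finite abscissa $\xi_\alpha$. At an equilibrium $\ell\in\{0,1\}$ there are two possibilities: either $\int D/z$ diverges and $\phi$ attains $\ell$ only as $\xi\to\mp\infty$, or it converges, $\phi$ reaches $\ell$ at a finite abscissa, and we prolong $\phi$ by the constant $\ell$ --- legitimate because $g(\ell)=0$, and compatible with \eqref{e:def-tw} since $D(\phi)\phi'\to z(\ell)=0$ at the junction. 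In either case one obtains a global, monotone non-increasing, non-constant $\phi$ with $\phi(-\infty)=1$ and $\phi(+\infty)=0$, that is, the profile of a wavefront with speed $c$. If the first-order problem is \eqref{e:problem0} and $\alpha\neq\gamma$ (case (a)(i)) this already concludes the proof, because $\gamma$ lies in the interior of one of the two intervals, $z(\gamma)\neq0$, and $\zeta$ is of class $C^1$ across $\gamma$.

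The only place where \eqref{integrability g} is genuinely needed is the interior node $\gamma$ in the two cases with $z(\gamma)=0$: case (a)(ii), where $\alpha=\gamma$, and case (b), where $\alpha<\gamma$ and $z$ solves \eqref{e:problem0prime} with $z(\gamma)=0$. Here $g(\gamma)=0$, so a priori $z$ could vanish at $\gamma$ much faster than $D$, making $\int D/z$ diverge and leaving the branches unable to reach level $\gamma$ at a finite abscissa --- hence preventing one from closing the front between $1$ and $0$. Under \eqref{integrability g}, however, $|g(\phi)|\ge L|\phi-\gamma|^{\tau}$, combined with $|D(\phi)|\ge c_0|\phi-\gamma|$ when $D(\gamma)=0$ (resp.\ $|D(\phi)|\ge c_0>0$ when $D(\gamma)\neq0$), gives $|D(\phi)g(\phi)|\gtrsim|\phi-\gamma|^{1+\tau}$ near $\gamma$; feeding this into $\tfrac{d}{d\phi}\bigl(\tfrac12 z^2\bigr)=(h-c)z-Dg$, a comparison argument for $w=z^2$ yields a lower bound $|z(\phi)|\gtrsim|\phi-\gamma|^{\kappa}$ with $\kappa<1$ (one finds $\kappa=\tfrac{1+\tau}{2}$ when $D(\gamma)\neq0$). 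Since $\tau<1$, $|D/z|\lesssim|\phi-\gamma|^{-\kappa}$ is integrable near $\gamma$, so each branch reaches $\gamma$ at a finite abscissa; we glue them, inserting if necessary a plateau $\phi\equiv\gamma$ of arbitrary (possibly zero) length --- admissible because $g(\gamma)=0$ --- and \eqref{e:def-tw} holds across the plateau edges since, once more, $D(\phi)\phi'=z(\phi)\to z(\gamma)=0$ there. I expect this integrability estimate near $\gamma$ to be the main technical obstacle, and would isolate it as one of the first-order lemmas of Section~\ref{s:first order}; granted it, the gluing and the verification of \eqref{e:def-tw} described above are routine bookkeeping.
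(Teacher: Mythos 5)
Your architecture is the same as the paper's: recover the profile branch by branch from $\phi'=z(\phi)/D(\phi)$, splice at the nodes using the continuity of $D(\phi)\phi'=z(\phi)$, reduce everything to the finiteness of $\int D/z$ at the interior nodes, and correctly locate \eqref{integrability g} as the hypothesis governing the degenerate node $\gamma$. The problem is that the step you yourself single out as the main technical obstacle --- the lower bound on $|z|$ near a node where $z$ vanishes --- is carried out by an argument that fails in case $(a)(ii)$. First, you use $|D(\phi)|\ge c_0|\phi-\gamma|$ when $D(\gamma)=0$; hypothesis (D) only gives $D\in C^1$ with a sign change at $\alpha$, so $\dot D(\alpha)$ may vanish and the bound $|Dg|\gtrsim|\phi-\gamma|^{1+\tau}$ need not hold. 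Second, and more seriously, the identity $\tfrac{d}{d\phi}(z^2/2)=(h-c)z-Dg$ yields a useful lower bound for $z^2$ only on the side of the node where the forcing $-Dg$, integrated away from the zero of $z$, is positive. When $\alpha=\gamma$ one has $Dg<0$ on \emph{both} sides of $\alpha$, and on the branch $\phi\in(0,\alpha)$ (where $z<0$) the identity reads $z(\phi)^2=2\int_\phi^{\gamma}\bigl[Dg-(h-c)z\bigr]\,d\sigma$ with $\int_\phi^{\gamma}Dg\,d\sigma<0$: the source term has the wrong sign, and what comes out is an \emph{upper} bound $|z|\lesssim|\phi-\gamma|$, not the lower bound $|z|\gtrsim|\phi-\gamma|^{\kappa}$, $\kappa<1$, that you need. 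So the integrability of $D/z$ on that branch is not established.

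This is precisely why the paper does not use $w=z^2$ there: it builds the explicit comparison function $\zeta(\psi)=-k\bar D(\psi)(\psi-\bar\alpha)^{\tau}$, which contains $D$ itself (so no lower bound on $|D|$ is required), checks via \eqref{integrability g} that it is a strict lower-solution of \eqref{e:problem0101} as in \eqref{omega1}, and compares along a sequence $\theta_n$ with $\dot w(\theta_n)\to0$ as in \eqref{omega2}; the resulting estimate $|z(\phi)|\ge k|D(\phi)|\,(\alpha-\phi)^{\tau}$ gives $|D/z|\le k^{-1}(\alpha-\phi)^{-\tau}$, which is integrable. The remaining branches (the side $\phi\in(\alpha,1)$ in case $(a)(ii)$, and both sides of $\gamma$ in case $(b)$) are handled in the paper by reduction to quoted results of \cite{CdRM} and \cite{CM-DPDE} rather than by a direct energy estimate; your energy argument is at least sign-consistent there, but would still need to be written out. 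Finally, your claim that at the node $\alpha$ (for $\alpha\neq\gamma$) ``inspecting \eqref{e:dot z i} shows that $z$ cannot vanish faster than $D$'' is true but is not a one-line observation: it is the content of the asymptotics quoted from \cite{BCM1} and \cite{CM-DPDE} (compare Proposition \ref{prop:reg alpha2}), and as stated it is an assertion rather than a proof.
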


We postpone the proof of Proposition \ref{prop:suff} to Section \ref{s:proofs}. The following corollary establishes a correspondence between profiles $\phi$ and solutions $z$ of \eqref{e:problem0} or \eqref{e:problem0prime}.

\begin{corollary}
\label{cor:equivalence}
We have:
\begin{enumerate}[(a)]
\item when $\alpha >\gamma$, there exists a one-to-one correspondence between profiles $\phi$ (up to space shifts) with speed $c$ and solutions $z$ of \eqref{e:problem0};
\item when $\alpha=\gamma$, under \eqref{integrability g} there exists a one-to-one correspondence between profiles $\phi$ (up to space shifts and stretchings at $\gamma$) with speed $c$ and solutions $z$ of \eqref{e:problem0};
\item when $\alpha < \gamma$ and \eqref{g sublinear gamma} holds, then there exists a one-to-one correspondence between profiles $\phi$ (up to space shifts) with speed $c$ and solutions $z$ of \eqref{e:problem0}. Instead, under \eqref{integrability g} there exists a one-to-one correspondence between profiles $\phi_0$ of Theorem \ref{th:multiplicity} $(c)$ (up to shifts and stretchings at $\gamma$) and solutions of \eqref{e:problem0prime} with $z(\gamma)=0$.
\end{enumerate}
\end{corollary}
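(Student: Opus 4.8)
The plan is to assemble the correspondence out of the two preceding propositions: Proposition~\ref{prop:nec} supplies the map $\phi\mapsto z$ from profiles to solutions, Proposition~\ref{prop:suff} supplies the reverse map $z\mapsto\phi$, and the content of the corollary is that, after quotienting by space shifts — and, in case $(b)$ and in the \eqref{integrability g} part of $(c)$, also by the stretchings \eqref{stretch} at level $\gamma$ — these two maps are mutually inverse bijections, for each fixed $c\in\R$.

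First I would check that $\phi\mapsto z$ descends to the relevant quotient. Recall that $z$ is built in Proposition~\ref{prop:nec} via $z(\phi)=D(\phi)\,\phi'(\zeta_i(\phi))$, where the $\zeta_i$ are the local inverses of the strictly monotone branches of $\phi$ (which exist by the strict monotonicity in Proposition~\ref{prop:nec}). Translating $\phi$ by a constant translates every $\zeta_i$ by the opposite constant and leaves the composition $\phi'\circ\zeta_i$, hence $z$, unchanged, so the map factors through shifts. When a plateau at level $\gamma$ is present (always if $\alpha=\gamma$, and if $\lambda=0$ when $\alpha<\gamma$), a stretching \eqref{stretch} alters neither the restriction of $\phi$ to $\{0<\phi<\gamma\}$ nor to $\{\gamma<\phi<1\}$, hence none of the $\zeta_i$ and none of the $z_i$; the plateau merely contributes the value $z(\gamma)=D(\gamma)\cdot 0=0$, which is compatible with \eqref{e:problem0prime}. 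Thus $\phi\mapsto z$ is well defined on the stated quotient, and by Proposition~\ref{prop:nec} it lands in \eqref{e:problem0} in case $(a)$ and in case $(c)$ under \eqref{g sublinear gamma} (there $z(\gamma)\neq0$), and in \eqref{e:problem0prime} with $z(\gamma)=0$ for the profiles $\phi_0$ of $(c)$ under \eqref{integrability g}.

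Next, the reconstruction $z\mapsto\phi$ of Proposition~\ref{prop:suff} yields, up to a shift, $\phi=\zeta^{-1}$ with $\zeta$ an antiderivative of $\phi\mapsto D(\phi)/z(\phi)$ on each subinterval where $z\neq0$, the continuity of $\mathcal V$ (Lemma~\ref{lem:def}) forcing the gluing at $\alpha$ and, when $z(\gamma)\neq0$, at $\gamma$; the additive constant of $\zeta$ is exactly the shift freedom, and when $z(\gamma)=0$ the extra possibility of interposing a segment on which $\phi\equiv\gamma$ is exactly the stretching freedom. Mutual inverseness is then the inverse-function identity: if $z$ comes from $\phi$, then $D(\phi)/z(\phi)=1/\phi'(\zeta_i(\phi))=\zeta_i'(\phi)$, so integrating recovers each $\zeta_i$, hence $\phi$, up to a shift (and a stretching where plateaus occur); conversely, if $\phi$ is built from $z$, then $D(\phi)\,\phi'(\zeta(\phi))=D(\phi)/\zeta'(\phi)=z(\phi)$, so re-applying Proposition~\ref{prop:nec} returns the original $z$. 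In case $(a)$ and in case $(c)$ under \eqref{g sublinear gamma}, Proposition~\ref{prop:nec} moreover provides a unique $\xi_\gamma$ with $\phi(\xi_\gamma)=\gamma$, so no plateau can occur and the only identification is the shift, giving the claimed one-to-one correspondence with \eqref{e:problem0}. In case $(b)$ and in the $\phi_0$-part of $(c)$, the profile is pinned down only modulo shifts and stretchings, consistently with Theorem~\ref{th:multiplicity} $(b)$, $(c)$, which matches solutions of \eqref{e:problem0} (resp.\ of \eqref{e:problem0prime} with $z(\gamma)=0$).

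The step I expect to demand the most care is the bookkeeping of the stretching equivalence: one must verify that the reconstruction of Proposition~\ref{prop:suff} produces exactly one equivalence class — neither spurious non-stretching profiles nor a missed stretching — so that the correspondence is genuinely one-to-one. This rests on the behaviour of $z$ near $\gamma$ forced by \eqref{integrability g}, which makes $\int^\gamma D(\sigma)/z(\sigma)\,d\sigma$ finite (so $\gamma$ is reached at a finite $\xi$) while still allowing an arbitrarily long plateau there; the remaining identifications are routine, relying only on the inverse function theorem and the continuity of $\mathcal V$ from Lemma~\ref{lem:def}.
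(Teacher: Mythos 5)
Your proposal is correct and follows exactly the route the paper takes: its proof of this corollary is the single sentence ``direct application of Propositions \ref{prop:nec} and \ref{prop:suff}'', and you have simply spelled out the details of why those two maps are mutually inverse modulo shifts and stretchings. The only micro-point you leave implicit is that no plateau can arise at level $\alpha$ when $\alpha\neq\gamma$ (since $g(\alpha)\neq0$ there), but that is already noted in the paper and does not affect the argument.
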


\begin{proof}
We deduce the statement by a direct application of Propositions \ref{prop:nec} and \ref{prop:suff}.
\end{proof}

%%%%%%%%%%%%%%%%%%%
%%%%%%%%%%%%%%%%%%%%%%%%%%%%%%%%%%

%%%%%%%%%%%%%%%%%%%%%%%%%%%%%%%%%%%%%%%%%%%%%%%%%%%%%%%%%%%
\section{Solvability of the first-order problem}
\label{s:first order}
\setcounter{equation}{0}

Motivated by Propositions \ref{prop:nec}, \ref{prop:suff} and Corollary \ref{cor:equivalence}, in this section we take up the study of problem \eqref{e:problem0}. We first collect the main properties of the solutions to the general problem
\begin{equation}
\label{e:problem0101}
\begin{cases}
\dot z(\phi) = h(\phi)-c- \frac{D(\phi)g(\phi)}{z(\phi)}, \ &\phi \in (\sigma_1,\sigma_2),\\
z(\phi)<0, \ &\phi \in (\sigma_1, \sigma_2).
\end{cases}
\end{equation}
We denote in the following $q:=Dg$. We refer to Figure 2.

We recall that for a function $q:[0,1]\to \R$, the notation $D_{+}q(\rho_0)$ and $D_{-}q(\rho_0)$, with $\rho_0\in[0,1]$, stands for the {\em right}, resp., {\em left lower Dini-derivative} of $q$ at $\rho_0$; analogously, $D^{\pm}q$ represent the {\em right} and {\em left upper Dini-derivatives} of $q$.
More explicitly,
\[
D_{\pm}q(\rho_0):=\liminf_{\rho \to \rho_0^\pm}\frac{q(\rho)-q(\rho_0)}{\rho-\rho_0},
\quad
D^{\pm}q(\rho_0):=\limsup_{\rho \to \rho_0^\pm}\frac{q(\rho)-q(\rho_0)}{\rho-\rho_0}.
\]

\begin{lemma}
\label{lem:0101}
Let $h,q$ be continuous functions on $[\sigma_1, \sigma_2]$, with $q >0$ in $(\sigma_1, \sigma_2)$ and $q(\sigma_1)=q(\sigma_2)=0$. Consider Problem \eqref{e:problem0101}. Then we have:
\begin{enumerate}[(a)]
%\item Every $z \in C^1(0,1)$ satisfying \eqref{e:problem0101} can be continuously extended up to $[\sigma_1, \sigma_2]$.

\item For every $c \in \R$, there exists a (unique) $\zeta_c \in C^0[\sigma_1,\sigma_2] \cap C^1\left(\sigma_1,\sigma_2\right)$ satisfying \eqref{e:problem0101} and such that $\zeta_c(\sigma_2)=0$.

\item If $c_1 < c_2$ then $\zeta_{c_1}(\phi) < \zeta_{c_2}(\phi)$, for $\phi \in (\sigma_1,\sigma_2)$. Moreover, if $\zeta_{c_1}(\sigma_1)<0$ then $\zeta_{c_1} < \zeta_{c_2}$ in $[\sigma_1, \sigma_2)$.

\item It holds that $\lim_{c\to \infty}\zeta_c(\sigma_1)=0$.

\item Let $c^*=c^*\left(q; h; \sigma_1,\sigma_2\right)\in (-\infty,\infty]$ be defined by
\begin{equation}
\label{c*}
c^*:=\sup \left\{c\in \R: \zeta_c(\sigma_1)<0\right\}.
\end{equation}
If $c^* < \infty$, then for every $c>c^*$, there exists $\beta(c) \in (-\infty,0)$ such that there is a (unique) $z_{c,s} \in C^0[\sigma_1,\sigma_2]\cap C^1\left(\sigma_1,\sigma_2\right]$ satisfying \eqref{e:problem0101} and both $z_{c,s}(\sigma_1)=0$ and $z_{c,s}(\sigma_2)=s<0$, if and only if $s \ge \beta(c)$.

\end{enumerate}
\end{lemma}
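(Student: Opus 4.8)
The plan is to analyse Problem~\eqref{e:problem0101} by a shooting argument from $\sigma_2$. Since $F_c(\phi,z):=h(\phi)-c-q(\phi)/z$ is locally Lipschitz in $z$ on $\{z<0\}$, for each $s<0$ there is a unique maximal solution $z^{(s)}_c$ of $\dot z=F_c(\phi,z)$ with $z^{(s)}_c(\sigma_2)=s$. Two structural facts will drive everything. First, a \emph{barrier at $z=0$}: a solution with $z<0$ just to the right of some $\phi_1\in(\sigma_1,\sigma_2)$ and $z\to0^-$ as $\phi\downarrow\phi_1$ cannot exist, because then $\dot z=h-c-q/z\to+\infty$ (as $q(\phi_1)>0$), while $z$ being negative just to the right forces $\dot z(\phi_1^+)\le0$; hence, shooting backward from $\sigma_2$, $z^{(s)}_c$ never reaches $0$ in $(\sigma_1,\sigma_2)$. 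Moreover $\dot z^{(s)}_c=h-c+q/|z^{(s)}_c|\ge\min_{[\sigma_1,\sigma_2]}(h-c)$, and $\dot z^{(s)}_c\le\max(h-c)+\|q\|_\infty$ wherever $|z^{(s)}_c|\ge1$, so $z^{(s)}_c$ neither blows up nor terminates inside $(\sigma_1,\sigma_2)$: it is defined on all of $(\sigma_1,\sigma_2]$, stays $<0$ there, and — since $\phi\mapsto z^{(s)}_c(\phi)+K\phi$ with $K:=-\min(h-c)$ is monotone and bounded near $\sigma_1$ — has a finite limit $z^{(s)}_c(\sigma_1)\in(-\infty,0]$. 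Second, a \emph{no-crossing principle}: distinct solutions of $\dot z=F_c(\phi,z)$ in $\{z<0\}$ cannot touch, so $s\mapsto z^{(s)}_c(\phi)$ is non-decreasing for each $\phi$ (strictly increasing at points where the value is negative); and since $F_c$ is strictly decreasing in $c$, a one-sided comparison run backward from $\sigma_2$ gives $z^{(s)}_{c_1}<z^{(s)}_{c_2}$ on $(\sigma_1,\sigma_2)$ whenever $c_1<c_2$.

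For \emph{(a)} I would set $\zeta_c:=\sup_{s<0}z^{(s)}_c=\lim_{s\uparrow0}z^{(s)}_c$, which is $\le0$ and pointwise finite. On any compact $[a,b]\subset(\sigma_1,\sigma_2)$ one checks that $|z^{(s)}_c|$ is bounded below by a constant independent of $s$: otherwise, if $z^{(s)}_c$ took a tiny value $-\delta$ at some $\phi_1\in[a,b]$, the identity $(|z^{(s)}_c|^2)'=-2|z^{(s)}_c|(h-c)-2q\le 2\delta\,\|h-c\|_\infty-2q_0<0$ (with $q_0:=\min_{[a,(\sigma_2+b)/2]}q>0$) would drive $|z^{(s)}_c|$ down to $0$ before $\sigma_2$, contradicting $z^{(s)}_c<0$ on $(\sigma_1,\sigma_2)$. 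Together with $z^{(s)}_c\ge z^{(s_0)}_c$, the family $\{z^{(s)}_c\}$ is then equibounded and equicontinuous on $[a,b]$, so $z^{(s)}_c\to\zeta_c$ uniformly with $\dot z^{(s)}_c\to F_c(\cdot,\zeta_c)$; hence $\zeta_c\in C^1(\sigma_1,\sigma_2)$ solves the equation and $\zeta_c<0$ there. Continuity at $\sigma_2$ follows from $s\le\liminf_{\phi\to\sigma_2^-}\zeta_c(\phi)\le0$ for every $s<0$, and at $\sigma_1$ from the monotonicity trick above, giving $\zeta_c\in C^0[\sigma_1,\sigma_2]$. Uniqueness is clean: if $\tilde z$ is another solution with $\tilde z(\sigma_2)=0$, then near $\sigma_2$ one has $\tilde z>z^{(s)}_c$ for each $s<0$ (their limits at $\sigma_2$ are $0>s$), so by no crossing $\tilde z\ge\zeta_c$ on $(\sigma_1,\sigma_2)$; and $v:=\tilde z-\zeta_c\ge0$ satisfies $\dot v=\bigl(q/(\tilde z\zeta_c)\bigr)v$ with $q/(\tilde z\zeta_c)>0$, so $v$ is non-decreasing, while $v(\phi)\to0$ as $\phi\to\sigma_2^-$ forces $v\equiv0$.

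Part \emph{(b)} is then the comparison from the setup passed to the limit $s\uparrow0$ — $\zeta_{c_1}\le\zeta_{c_2}$ — upgraded to strict inequality on $(\sigma_1,\sigma_2)$ because $\zeta_{c_1}$ solves the $c_1$-equation, hence is a strict supersolution of the $c_2$-equation, and cannot touch the $c_2$-solution $\zeta_{c_2}$ from below (at a contact point the derivatives would coincide, contradicting $F_{c_1}>F_{c_2}$); if moreover $\zeta_{c_1}(\sigma_1)<0$, then $\zeta_{c_1}$ is $C^1$ up to $\sigma_1$ with $\dot\zeta_{c_1}(\sigma_1)=h(\sigma_1)-c_1$ (since $q(\sigma_1)=0$), and comparing one-sided expansions at $\sigma_1$ rules out $\zeta_{c_1}(\sigma_1)=\zeta_{c_2}(\sigma_1)$. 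For \emph{(c)}, by (b) the map $c\mapsto\zeta_c(\sigma_1)$ is non-decreasing and $\le0$, so it has a limit; to see the limit is $0$, suppose $\zeta_c(\sigma_1)\le-2\epsilon$, put $\phi_\epsilon:=\sup\{\phi:\zeta_c\le-\epsilon\text{ on }[\sigma_1,\phi]\}\in(\sigma_1,\sigma_2)$, and note that on $[\sigma_1,\phi_\epsilon]$, where $\zeta_c$ is classical, $\dot\zeta_c=h-c+q/|\zeta_c|\le\max_{[\sigma_1,\sigma_2]}h-c+\|q\|_\infty/\epsilon<0$ once $c>\max h+\|q\|_\infty/\epsilon$; integrating gives $\zeta_c(\phi_\epsilon)<\zeta_c(\sigma_1)$, i.e. $-\epsilon<-2\epsilon$, impossible. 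Hence $\zeta_c(\sigma_1)>-2\epsilon$ for $c$ large and, $\epsilon$ being arbitrary, $\zeta_c(\sigma_1)\to0$.

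Finally \emph{(d)}. Fix $c>c^*$; by the definition of $c^*$ and (b), $\zeta_c(\sigma_1)=0$. Since $z^{(s)}_c\uparrow\zeta_c$ as $s\uparrow0$ we get $z^{(s)}_c(\sigma_1)\uparrow0$, and $s\mapsto z^{(s)}_c(\sigma_1)$ is non-decreasing with the property that once it vanishes at some $s_0$ it vanishes on $[s_0,0)$; continuity where it is negative then shows $\{s<0:z^{(s)}_c(\sigma_1)=0\}=[\beta(c),0)$ with $\beta(c)$ its infimum, and then any solution with $z(\sigma_2)=s$ is the regular Cauchy solution $z^{(s)}_c$, which gives uniqueness of $z_{c,s}$. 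That $\beta(c)>-\infty$ is elementary: for $s$ very negative $|q/z^{(s)}_c|\le\|q\|_\infty$ while $|z^{(s)}_c|\ge1$, so $\dot z^{(s)}_c$ stays bounded and $z^{(s)}_c(\sigma_1)\le s+C<0$. The real content is $\beta(c)<0$, i.e. that $z^{(s)}_c(\sigma_1)=0$ for some $s<0$: this expresses the non-uniqueness of the singular initial-value problem at $(\sigma_1,0)$ when $c>c^*$ — the solutions entering $\{z<0\}$ from there form a one-parameter family — and is obtained from the local analysis of the equation near $\sigma_1$, or by adapting the corresponding statements in \cite{BCM1, BCM2, CdRM}; the no-crossing principle then reparametrizes the family by $z(\sigma_2)$, filling a left-neighbourhood of $0$. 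The main obstacle throughout is precisely this behaviour at the singular endpoints — existence/uniqueness of $\zeta_c$ near $\sigma_2$ for (a) and the one-parameter family near $\sigma_1$ for (d) — which is what forces case-sensitive estimates, since the admissible local asymptotics of $z$ depend on the order of vanishing of $q$ at $\sigma_1$ and $\sigma_2$.
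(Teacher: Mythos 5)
Your treatment of parts \emph{(a)}--\emph{(c)} is correct and, unlike the paper (which disposes of \emph{(a)}, the first half of \emph{(b)} and \emph{(d)} by citing \cite{CM-DPDE} and \cite{BCM1}), essentially self-contained: the barrier at $z=0$, the uniform lower bound on $|z^{(s)}_c|$ on compacta via $(z^2)'=2z(h-c)-2q$, the monotone limit $\zeta_c=\sup_{s<0}z^{(s)}_c$, the $v$-argument for uniqueness, and the one-sided expansion at $\sigma_1$ for the strict comparison all check out; your proof of \emph{(c)} is a cleaner variant of the paper's sequential argument.

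The genuine gap is at the crux of \emph{(d)}, namely the claim $\beta(c)<0$, i.e.\ that for $c>c^*$ there actually exists some $s<0$ with $z^{(s)}_c(\sigma_1)=0$. Two things go wrong here. First, the step ``$z^{(s)}_c\uparrow\zeta_c$ and $\zeta_c(\sigma_1)=0$, hence $z^{(s)}_c(\sigma_1)\uparrow 0$'' is not justified: your convergence $z^{(s)}_c\to\zeta_c$ is only locally uniform on the open interval $(\sigma_1,\sigma_2)$, and monotone pointwise convergence there does not control the boundary values at the singular endpoint $\sigma_1$ (where $q$ vanishes and the field degenerates); a priori one could have $\sup_{s<0}z^{(s)}_c(\sigma_1)<0=\zeta_c(\sigma_1)$. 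Second, and more importantly, even granting $z^{(s)}_c(\sigma_1)\uparrow 0$, a non-decreasing map with supremum $0$ need never attain $0$ at any $s<0$; if it does not, the set $\{s<0:z^{(s)}_c(\sigma_1)=0\}$ is empty and the conclusion of \emph{(d)} fails. Ruling this out is exactly where the hypothesis $c>c^*$ must do quantitative work near $\sigma_1$ (e.g.\ via an explicit barrier of the form $-k(\phi-\sigma_1)$ exploiting $h(\sigma_1)-c<0$ and $D_+q(\sigma_1)<\infty$, or whatever local asymptotics of $q$ are available), and it is precisely the content of \cite[Proposition~5.1]{BCM1}, which the paper invokes wholesale. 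Your sentence deferring this to ``local analysis near $\sigma_1$, or by adapting the corresponding statements in \cite{BCM1,BCM2,CdRM}'' therefore leaves the one nontrivial assertion of \emph{(d)} unproved; either supply the barrier construction or cite \cite[Proposition~5.1]{BCM1} explicitly as the paper does, in which case most of your shooting machinery for \emph{(d)} becomes redundant.
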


 %%%%%%%%%%%%%%%%
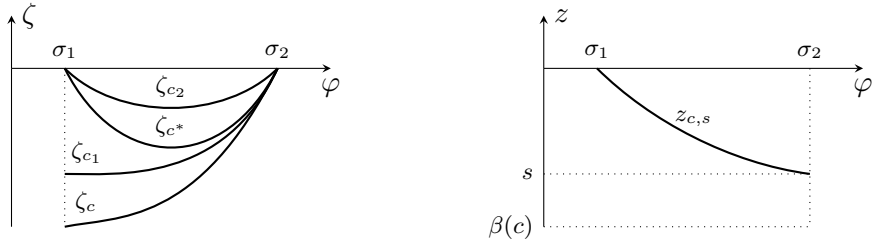
\begin{figure}[htb]
\begin{center}
\begin{tikzpicture}[>=stealth, scale=0.7]
%axes
\draw[->] (0,0) --  (6,0) node[below]{$\phi$} coordinate (x axis);
\draw[->] (0,0) -- (0,1) node[right]{$\zeta$} coordinate (y axis);
\draw (0,0) -- (0,-3);
%zeta
\draw[thick] (1,-3) .. controls (2,-2.8) and (3.5,-3) .. (5,0) node[above]{\footnotesize{$\sigma_2$}} node[very near start, above]{\footnotesize$\zeta_c$}; %\zeta_c
\draw[dotted] (1,0) node[above]{\footnotesize$\sigma_1$} --(1,-3);
\draw[thick] (1,-2) .. controls (2,-2) and (4,-2.2) .. (5,0) node[very near start, above]{\footnotesize$\zeta_{c_1}$}; %\zeta_c_1
\draw[thick] (1,0) .. controls (2,-2) and (4,-2) .. (5,0) node[midway, above]{\footnotesize$\zeta_{c^*}$}; %\zeta_c^*
\draw[thick] (1,0) .. controls (2,-1) and (4,-1) .. (5,0) node[midway, above]{\footnotesize$\zeta_{c_2}$}; %\zeta_c_1

\begin{scope}[xshift=10cm]
%axes
\draw[->] (0,0) --  (6,0) node[below]{$\phi$} coordinate (x axis);
\draw[->] (0,0) -- (0,1) node[right]{$z$} coordinate (y axis);
\draw (0,0) -- (0,-3);
%zeta
\draw[thick] (1,0) node[above]{\footnotesize{$\sigma_1$}} .. controls (2,-1) and (3.5,-1.8) .. (5,-2) node[midway, above]{\footnotesize$z_{c,s}$}; %zeta_cs
\draw[dotted] (5,0) node[above]{\footnotesize$\sigma_2$} --(5,-3);
\draw[dotted] (0,-2) node[left]{\footnotesize$s$}-- (5,-2);
\draw[dotted] (0,-3) node[left]{\footnotesize$\beta(c)$}-- (5,-3);
%\draw[thick] (1,-2) .. controls (2,-2) and (4,-2.2) .. (5,0) node[very near start, above]{\footnotesize$\zeta_{c_1}$}; %\zeta_c_1
%\draw[thick] (1,0) .. controls (2,-2) and (4,-2) .. (5,0) node[midway, above]{\footnotesize$\zeta_{c^*}$}; %\zeta_c^*
%\draw[thick] (1,0) .. controls (2,-1) and (4,-1) .. (5,0) node[midway, above]{\footnotesize$\zeta_{c_2}$}; %\zeta_c_1

\end{scope}

\end{tikzpicture}
\end{center}
\caption{\label{f:11}{Plots of the solutions $\zeta_c$ (left, for $c<c_1<c^*<c_{3.2}$) and $z_{c,s}$ (right, for $c>c^*$) in Lemma \ref{lem:0101}.}}
\end{figure}
%%%%%%%%%%%

\begin{proof}
It is easy to verify that $\eqref{e:problem0101}$ is equivalent to the same problem in $[0,1]$. Part $(a)$ then follows  from \cite[Theorem 2.6]{CM-DPDE} while the first part of
$(b)$, i.e., that $c_1<c_{2}$ implies $\zeta_{c_1} < \zeta_{c_2}$ in $(\sigma_1, \sigma_2)$ and hence $\zeta_{c_1} (\sigma_1)\le\zeta_{c_2}(\sigma_1)$, was proved in \cite[Lemma 5.1]{CM-DPDE}. Assume $\zeta_{c_1}(\sigma_1)=\zeta_{c_2}(\sigma_1)<0$, hence $\dot {\zeta_{c_1}}(\sigma_1^+)=h(\sigma_1)-c_1 > h(\sigma_1)-c_{2} =\dot {\zeta_{c_2}}(\sigma_1^+)$ and we get a contradiction with $\zeta_{c_1} < \zeta_{c_2}$ in $(\sigma_1, \sigma_2)$; part $(a)$ is proved.
\par
We prove $(c)$. Let $\{c_n\}_n$ be an increasing sequence with $c_n \to \infty$, as $n \to \infty$. Define
\[ \mu := \lim_{n \to \infty} \zeta_{c_n}(\sigma_1).\]
Clearly, $\mu \le 0$. Suppose by contradiction that $\mu <0$ and denote with $\eta=\eta(\phi)$  the solution of the Cauchy problem
\[ \begin{cases}
\dot \eta(\phi) = h(\phi)-\delta -\frac{q(\phi)}{\eta(\phi)}, \ \phi >\sigma_1,\\
\eta(\sigma_1)=\mu,
\end{cases}
\]
defined in $[\sigma_1, \sigma)$ with  $\sigma\le \sigma_2$ and $\delta<c_1$. Since $\eta$ satisfies
\[
\dot{\eta} > h-c_n -\frac{q}{\eta} \ \mbox{ in } \ (\sigma_1,\sigma) \ \mbox{ and } \ \eta(\sigma_1) \ge \zeta_{c_n}(\sigma_1),
\]
 by a comparison-type argument (see e.g. \cite[Lemma 3.2 (2.a.ii)]{BCM1}), it follows $\eta > \zeta_{c_n}$ in $(\sigma_1,\sigma)$. The latter implies, for every $[a,b] \subset (\sigma_1, \sigma)$ and for every $n \in \N$,
\[  \zeta_{c_1}(b)-\eta(a) < \zeta_{c_n}(b)-\zeta_{c_n}(a) < \int_{a}^{b} h(\tau) -c_n -\frac{q(\tau)}{\eta(\tau)}\, d\tau\le (b-a)\left(\max_{[a,b]}\left|h -\frac{q}{\eta}\right| -c_n\right),\]
which tends to $-\infty$ as $n\to\infty$. This is a contradiction, since $\zeta_{c_1}$ and $\eta$ are continuous in $[a,b]$.
\par
Lastly, part {\em (d)} is deduced from \cite[Proposition 5.1]{BCM1}. In fact, in \cite[Proposition 5.1]{BCM1} it was also assumed (in our notation) $D^+ q(\sigma_1)<\infty$, in order to have $c^* <\infty$. Here, this is not necessary since we already assumed $c^*<\infty$.
\end{proof}

The following corollary establishes some fundamental estimates for the threshold $c^*$. We refer to \cite{Crooks} for some new estimates on $c^*$ when $D=1$ and $f=0$.

\begin{corollary}
\label{cor:estimates}
We make the same assumptions of Lemma \ref{lem:0101}. Then we have
 \begin{equation}
 \label{e:c*below}
  c^*\left(q;h;\sigma_1,\sigma_2\right) \ge \max \left\{\sup_{(\sigma_1, \sigma_2]}\delta\left(f, \sigma_1\right), h(\sigma_1)+2\sqrt{D_+q(\sigma_1)}\right\}\ge h(\sigma_1),
 \end{equation}
and, if $h(\phi)\ge h(\sigma_1)$ in a right neighborhood of $\sigma_1$ then
 \begin{equation}
 \label{e:c*below2}
 c^*\left(q;h;\sigma_1,\sigma_2\right) > h(\sigma_1).
 \end{equation}
Moreover
 \begin{equation}
 \label{e:c*above}
 c^*\left(q;h;\sigma_1,\sigma_2\right) \le \sup_{(\sigma_1, \sigma_2]}\delta\left(f, \sigma_1\right) + 2 \sqrt{\sup_{(\sigma_1, \sigma_2]}\delta\left(q, \sigma_1\right)},
 \end{equation}
and $c^* \in \R$ if $D^+q(\sigma_1)<\infty$.
\end{corollary}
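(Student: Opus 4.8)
The plan is to obtain the lower bounds \eqref{e:c*below}, the strict bound \eqref{e:c*below2}, and the upper bound \eqref{e:c*above} by constructing, for suitable values of $c$, explicit sub- or super-solutions of the first-order equation in \eqref{e:problem0101} that control the sign of $\zeta_c(\sigma_1)$, and then invoking the definition \eqref{c*} of $c^*$ together with the monotonicity in Lemma \ref{lem:0101}$(b)$. Concretely, for the lower bound it suffices to exhibit, for each $c$ strictly below the claimed bound, a negative sub- or super-solution witnessing $\zeta_c(\sigma_1)<0$; for the upper bound one shows that if $c$ exceeds the right-hand side of \eqref{e:c*above}, then $\zeta_c(\sigma_1)=0$, so that $c\ge c^*$.

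First I would treat the bound $c^*\ge\sup_{(\sigma_1,\sigma_2]}\delta(f,\sigma_1)$. Fix $c<\sup\delta(f,\sigma_1)$, so there is $\phi_1\in(\sigma_1,\sigma_2]$ with $\delta(f,\sigma_1)(\phi_1)=\frac{f(\phi_1)-f(\sigma_1)}{\phi_1-\sigma_1}>c$; integrating $\dot\zeta_c=h-c-q/\zeta_c$ from $\sigma_1$ to $\phi_1$ and using $q>0$, $\zeta_c<0$ on $(\sigma_1,\phi_1)$ gives $\int_{\sigma_1}^{\phi_1}\dot\zeta_c>\int_{\sigma_1}^{\phi_1}(h-c)=f(\phi_1)-f(\sigma_1)-c(\phi_1-\sigma_1)>0$, hence $\zeta_c(\phi_1)>\zeta_c(\sigma_1)+0$ forces... more carefully, $\zeta_c(\phi_1)-\zeta_c(\sigma_1)>f(\phi_1)-f(\sigma_1)-c(\phi_1-\sigma_1)>0$; since $\zeta_c(\phi_1)<0$ this yields $\zeta_c(\sigma_1)<\zeta_c(\phi_1)<0$, so $c<c^*$. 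Letting $c\uparrow\sup\delta(f,\sigma_1)$ gives the first part. For the term $h(\sigma_1)+2\sqrt{D_+q(\sigma_1)}$, I would take $c<h(\sigma_1)+2\sqrt{D_+q(\sigma_1)}$ and build, near $\sigma_1$, a linear super-solution $z(\phi)=-k(\phi-\sigma_1)$ with $k>0$ chosen so that $-k\ge h(\sigma_1)-c-q(\phi)/(-k(\phi-\sigma_1))$, i.e. (using $q(\phi)\ge (D_+q(\sigma_1)-\eps)(\phi-\sigma_1)$ for $\phi$ near $\sigma_1$) so that $k^2-(c-h(\sigma_1))k+(D_+q(\sigma_1)-\eps)\le 0$; this is solvable in $k$ precisely because $c-h(\sigma_1)$ is just below $2\sqrt{D_+q(\sigma_1)}$. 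A comparison argument of the type \cite[Lemma 3.2]{BCM1} then sandwiches $\zeta_c$ below this line near $\sigma_1$, forcing $\zeta_c(\sigma_1)<0$, hence $c<c^*$; the inequality $\ge h(\sigma_1)$ is the case $D_+q(\sigma_1)=0$ (or follows directly since $\delta(f,\sigma_1)(\phi)\to h(\sigma_1)$). The strict bound \eqref{e:c*below2} follows by the same integral argument as for the $\delta(f,\sigma_1)$ term: if $h\ge h(\sigma_1)$ near $\sigma_1$, then for $c=h(\sigma_1)$ we get $\int_{\sigma_1}^{\phi_1}(h-c)\ge 0$ with strict positivity on a subinterval whenever $h\not\equiv h(\sigma_1)$, and even when $h\equiv h(\sigma_1)$ near $\sigma_1$ the term $-q/\zeta_c>0$ pushes $\zeta_c$ strictly up, so $\zeta_c(\sigma_1)<0$ and $c^*>h(\sigma_1)$.

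For the upper bound \eqref{e:c*above}, set $A:=\sup_{(\sigma_1,\sigma_2]}\delta(f,\sigma_1)$ and $B:=\sup_{(\sigma_1,\sigma_2]}\delta(q,\sigma_1)$ and take any $c>A+2\sqrt B$. I would show $\zeta_c(\sigma_1)=0$ by producing a negative sub-solution that blocks $\zeta_c$ away from $0$ and forces $\zeta_c(\sigma_1^+)\to 0$; the natural candidate is again linear, $\underline z(\phi)=-m(\phi-\sigma_1)$, with $m>0$ chosen so that $\underline z$ is a sub-solution of the equation on all of $(\sigma_1,\sigma_2)$: using $h(\phi)-c\le A-c+o(1)$ near $\sigma_1$ and, globally, $\int_{\sigma_1}^\phi(h-c)\le (A-c)(\phi-\sigma_1)$ together with $q(\phi)\le B(\phi-\sigma_1)$, the sub-solution condition $-m\le h(\phi)-c+\frac{q(\phi)}{m(\phi-\sigma_1)}$ reduces to $m^2-(c-A)m+B\ge 0$, which holds for an appropriate $m$ because $c-A>2\sqrt B$. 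Comparison (again in the spirit of \cite[Lemma 3.2]{BCM1}, now the opposite inequality) gives $\zeta_c(\phi)\ge -m(\phi-\sigma_1)$ on $(\sigma_1,\sigma_2)$, hence $\zeta_c(\sigma_1)\ge 0$; since $\zeta_c\le 0$, we get $\zeta_c(\sigma_1)=0$, so $c\ge c^*$ and \eqref{e:c*above} follows by letting $c\downarrow A+2\sqrt B$. Finally, $c^*\in\R$ when $D^+q(\sigma_1)<\infty$ because then $B<\infty$ (as $\delta(q,\sigma_1)(\phi)\to D^+q(\sigma_1)$ along the relevant subsequence and $q$ is continuous on the compact interval, bounded away from $\sigma_1$ it is bounded anyway), so the right-hand side of \eqref{e:c*above} is finite.

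I expect the main obstacle to be the careful handling of the comparison arguments at the singular endpoint $\sigma_1$: the coefficient $q/z$ blows up there, so one must check that the chosen linear barriers are genuine sub-/super-solutions on a one-sided neighborhood and that the comparison lemma from \cite{BCM1} applies despite the singularity — in particular verifying the sign conditions in the quadratic inequalities for $k$ and $m$ and matching the $o(1)$ terms coming from $D_+q$, $D^+q$ and $\delta(f,\sigma_1)$ uniformly near $\sigma_1$. The global (rather than merely local) validity of the sub-solution needed for \eqref{e:c*above} also requires the uniform bound $q(\phi)\le B(\phi-\sigma_1)$ on the whole interval, which is exactly why $B$ is defined as a supremum of difference quotients rather than just the Dini derivative at $\sigma_1$.
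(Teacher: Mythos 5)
The paper's own proof of this corollary is a three-line citation: \eqref{e:c*below} is quoted from \cite[formula (5.5)]{BCM1}, \eqref{e:c*below2} from \cite[Remark 6.4]{BCM1}, and \eqref{e:c*above} from \cite[Lemma 4.1]{BCM1}. So you are necessarily taking a different route by trying to prove everything from scratch. Your first step is fine: the integral argument showing $\zeta_c(\sigma_1)<\zeta_c(\phi_1)\le 0$ whenever $c<\delta(f,\sigma_1)(\phi_1)$, using $\dot\zeta_c>h-c$, is correct and is the standard proof of $c^*\ge\sup_{(\sigma_1,\sigma_2]}\delta(f,\sigma_1)$. The finiteness claim at the end is also fine once \eqref{e:c*above} is available.

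The two quadratic-barrier arguments, however, contain genuine errors. For the bound $c^*\ge h(\sigma_1)+2\sqrt{D_+q(\sigma_1)}$ you ask for $k>0$ with $k^2-(c-h(\sigma_1))k+(D_+q(\sigma_1)-\eps)\le 0$ and say this is solvable ``precisely because $c-h(\sigma_1)$ is just below $2\sqrt{D_+q(\sigma_1)}$''. This is backwards: that quadratic in $k$ has negative discriminant when $c-h(\sigma_1)<2\sqrt{D_+q(\sigma_1)-\eps}$, so it is strictly positive for every $k$ and no such barrier exists in exactly the regime you need. (The non-existence of a linear barrier below the threshold is the \emph{content} of the estimate; the correct proof runs the contrapositive, showing that $\zeta_c(\sigma_1)=0$ forces the limiting slope of $\zeta_c$ to be a nonpositive root of $s^2-(h(\sigma_1)-c)s+D_+q(\sigma_1)=0$, hence $c\ge h(\sigma_1)+2\sqrt{D_+q(\sigma_1)}$.) Moreover, even granting the barrier, ``$\zeta_c$ below the line $-k(\phi-\sigma_1)$ near $\sigma_1$'' does not force $\zeta_c(\sigma_1)<0$, since that line vanishes at $\sigma_1$. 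For the upper bound \eqref{e:c*above}, your pointwise sub-solution condition $-m\le h(\phi)-c+q(\phi)/(m(\phi-\sigma_1))$ cannot be reduced to $m^2-(c-A)m+B\ge 0$: the quantity $A=\sup\delta(f,\sigma_1)$ controls only averages of $h$, not $h(\phi)$ pointwise, and the bound $q(\phi)\le B(\phi-\sigma_1)$ enters with the wrong sign (you need a lower bound on $q/(\phi-\sigma_1)$ there, which is just $0$). Worse, $m^2-(c-A)m+B\ge 0$ holds for every sufficiently large $m$ regardless of $c$; if your comparison step were valid it would yield $\zeta_c(\sigma_1)=0$ for every $c$, contradicting the lower bound you just proved. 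The correct argument requires the quadratic to be nonpositive at a crossing point (solvable exactly when $c-A\ge 2\sqrt{B}$) together with an integrated, not pointwise, comparison — which is what \cite[Lemma 4.1]{BCM1} supplies. Finally, for \eqref{e:c*below2} you only establish $\zeta_{h(\sigma_1)}(\sigma_1)<0$, which gives $c^*\ge h(\sigma_1)$ but not the strict inequality; you would additionally need the continuity (or openness in $c$) of $c\mapsto\zeta_c(\sigma_1)$, which the paper obtains from \cite{BCM1}.
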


\begin{proof}
Formula \eqref{e:c*below} follows from \cite[formula (5.5)]{BCM1}, Formula  \eqref{e:c*below2} from \cite[Remark 6.4]{BCM1} (see also references therein) and  \eqref{e:c*above} from \cite[Lemma 4.1]{BCM1}.
\end{proof}

\begin{remark}
{
\rm
By \cite[Theorem 3.1]{Marcelli-Papalini}, we know that if $q$ is differentiable at $\sigma_1$ then $c\in \R$ and
  \begin{equation}
 \label{e:c*above2}
  c^*\left(q;h;\sigma_1,\sigma_2\right) \le \sup_{(\sigma_1, \sigma_2]}\delta\left(f, \sigma_1\right) + 2 \sqrt{\sup_{\phi \in (\sigma_1, \sigma_2]}\frac1{\phi -\sigma_1}\int_{\sigma_1}^{\phi}\frac{q(\sigma)}{\sigma-\sigma_1}\,d\sigma}.
 \end{equation}
}
\end{remark}

%%%

Now, on the basis of the general results given in Lemma \ref{lem:0101} and Corollary \ref{cor:estimates}, we are going to consider several sub-problems of \eqref{e:problem0}  and perform several change of variables; for brevity, for functions $F$ and $\omega\in\R$ we use the notation
\begin{equation}
\label{changes}
\bar{F}(\phi)=F(1-\phi), \quad \tilde{F}(\phi) = -F(1-\phi), \quad \hbox{ and }\quad \bar \omega = 1-\omega.
\end{equation}

The results depend on the mutual positions of the points $\alpha$ and $\gamma$ and then we consider separately the cases $\alpha>\gamma$, $\alpha=\gamma$ and $\alpha<\gamma$.
The discussion involves the values
\begin{equation}
\label{e:soglie1}
\begin{array}{ll}
c_{1.1}^*:=-c^*\left(\tilde{q}; \tilde{h}; \max\{\bar{\alpha},\bar{\gamma}\},1\right);  &c_{1.2}^*:=c^*\left(q; h; \gamma,\alpha\right), \ \mbox{ if } \ \alpha>\gamma;
\\
c_{3.1}^*:=-c^*\left(q; -h; \alpha,\gamma\right), \ \mbox{ if } \ \alpha<\gamma; \quad&
c_{3.2}^*:=c^*\left(\tilde q; \bar h; 0,\min\{\bar\alpha,\bar\gamma\}\right).
\end{array}
\end{equation}
Here $c^*$ is defined as in \eqref{c*} and $\tilde{q}, \tilde{h}, \bar{h}, \bar{\gamma}$ and $\bar{\alpha}$ as in \eqref{changes}.

%%%%%%%%%%%%%%%%%%%%%%%%%%%%%%%%%%%%%
\subsection {The subcase $\alpha>\gamma$}

We start with the sub-problem in $(0, \alpha)$, that is we consider
\begin{equation}
\label{e:problem1}
\begin{cases}
\dot{z}=h-c- q/z \ &\mbox{ in } \ (0,\alpha),\\
z<0 \ &\mbox{ in } \ (0,\alpha),\\
z(0)=z(\alpha)=0.
\end{cases}
\end{equation}

\begin{lemma}
\label{lem:problem1}
Problem \eqref{e:problem1} admits a solution $z\in C^0[0,\alpha]\cap C^1(0,\alpha) $ if and only if $c_{1.1}^* < c_{1.2}^*$. In such a case, solutions only occur for a unique real value $c=c_1^*\in (c_{1.1}^* , c_{1.2}^*)$, and the corresponding solution $z$ is unique.
\end{lemma}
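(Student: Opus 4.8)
\emph{Plan of proof.} The interval $(0,\alpha)$ contains $\gamma$, and on it $q:=Dg$ is negative on $(0,\gamma)$, positive on $(\gamma,\alpha)$, with $q(0)=q(\gamma)=q(\alpha)=0$ (the first two vanishings come from $g(0)=g(\gamma)=0$, the last from $D(\alpha)=0$). I would therefore split a hypothetical solution $z$ of \eqref{e:problem1} into $z|_{(0,\gamma)}$ and $z|_{(\gamma,\alpha)}$, analyse each of the two pieces by means of Lemma \ref{lem:0101}, and finally impose that they match at $\gamma$ with a strictly negative common value.

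On $(\gamma,\alpha)$ the restriction of \eqref{e:problem1} is exactly Problem \eqref{e:problem0101} with $(\sigma_1,\sigma_2)=(\gamma,\alpha)$; hence, by Lemma \ref{lem:0101}$(a)$, $z|_{(\gamma,\alpha)}$ must be the unique solution that vanishes at $\alpha$, and if $R(c)$ denotes its value at $\gamma$, then by \eqref{c*} we have $R(c)<0$ iff $c<c^*(q;h;\gamma,\alpha)=c_{1.2}^*$, and $R(c)=0$ for $c\ge c_{1.2}^*$. On $(0,\gamma)$, where $q<0$, I would use the change of variables $\bar z(\phi):=z(1-\phi)$ for $\phi\in(1-\gamma,1)$; a direct computation based on \eqref{changes} transforms the equation into $\dot{\bar z}=\tilde h-(-c)-\tilde q/\bar z$ with $\bar z<0$ on $(1-\gamma,1)$ and $\bar z(1)=z(0)=0$, where now $\tilde q>0$ on $(1-\gamma,1)$ and vanishes at both endpoints. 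So Lemma \ref{lem:0101}$(a)$ again determines this piece uniquely for each $c$, and if $L(c)$ denotes its value at $\gamma$ (that is $\bar z(1-\gamma)$) then by \eqref{c*} we have $L(c)<0$ iff $-c<c^*(\tilde q;\tilde h;1-\gamma,1)$, i.e.\ iff $c>c_{1.1}^*$, and $L(c)=0$ otherwise (recall $c_{1.1}^*=-c^*(\tilde q;\tilde h;1-\gamma,1)$ since $\max\{\bar\alpha,\bar\gamma\}=\bar\gamma=1-\gamma$ when $\alpha>\gamma$).

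A solution of \eqref{e:problem1} thus exists precisely when the two pieces glue, i.e.\ $F(c):=R(c)-L(c)=0$, with common value $<0$; the last requirement forces $c\in(c_{1.1}^*,c_{1.2}^*)$ by the previous paragraph, and conversely on that open interval the glued function is automatically $C^1$ across $\gamma$ because $q(\gamma)=0$ and $z(\gamma)\neq0$. On $(c_{1.1}^*,c_{1.2}^*)$ Lemma \ref{lem:0101}$(b)$ makes $R$ strictly increasing and $L$ strictly decreasing, so $F$ is continuous and strictly increasing there; moreover, using the continuous dependence of $\zeta_c$ on $c$, the identity $\zeta_{c^*}(\sigma_1)=0$, and Lemma \ref{lem:0101}$(c)$ whenever an endpoint is infinite, one gets $F(c)\to R(c_{1.1}^*)<0$ as $c\downarrow c_{1.1}^*$ and $F(c)\to -L(c_{1.2}^*)>0$ as $c\uparrow c_{1.2}^*$, provided $c_{1.1}^*<c_{1.2}^*$. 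The intermediate value theorem then produces a unique zero $c_1^*\in(c_{1.1}^*,c_{1.2}^*)$, and the function obtained by gluing the two uniquely determined pieces at that speed is the unique solution of \eqref{e:problem1}. If instead $c\le c_{1.1}^*$ or $c\ge c_{1.2}^*$, one of $L(c),R(c)$ equals $0$, so any match would force $z(\gamma)=0$, which is excluded; in particular, if $c_{1.1}^*\ge c_{1.2}^*$ no speed is admissible, which completes the equivalence.

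\emph{Main obstacle.} The geometric skeleton above (split at $\gamma$, change of variables, a monotone shooting function, the intermediate value theorem, and the elementary argument ruling out the two boundary regimes) is routine; the genuine content is the ``soft'' analysis it rests on, namely the continuous dependence of the solution $\zeta_c$ of \eqref{e:problem0101} on $c$, the identity $\zeta_{c^*}(\sigma_1)=0$, and the behaviour of $\zeta_c(\sigma_1)$ as $c\to c^*$ and as $c\to\pm\infty$. These are exactly the properties collected in Lemma \ref{lem:0101} and in the results of \cite{CM-DPDE,BCM1} quoted there, so in the write-up they would simply be invoked.
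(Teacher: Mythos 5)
Your proposal is correct and follows essentially the same route as the paper: split \eqref{e:problem1} at $\gamma$ into the two sub-problems on $(0,\gamma)$ and $(\gamma,\alpha)$, transfer the left piece to the framework of Lemma \ref{lem:0101} via the reflection \eqref{changes}, characterize when each piece is strictly negative at $\gamma$ through \eqref{c*} (yielding $c>c_{1.1}^*$ and $c<c_{1.2}^*$), and then run a monotone shooting argument on the difference of the two values at $\gamma$ (your $F$ is the negative of the paper's, hence increasing rather than decreasing), concluding by the intermediate value theorem; the $C^1$-matching at $\gamma$ via $q(\gamma)=0$, $z(\gamma)\neq 0$ is also the paper's. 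The only point the paper treats more carefully is the continuity of $c\mapsto z_i(c;\gamma)$, which it proves by a monotone-limit argument from \cite{BCM1} rather than invoking continuous dependence outright, but you correctly flag this as the ingredient to be supplied.
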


\begin{proof}
Consider the problems
\begin{equation}
\label{e:problem2}
\begin{cases}
\dot{z_1}=h-c- q/z_1 \ &\mbox{ in } \ (0,\gamma),\\
z_1<0 \ &\mbox{ in } \ (0,\gamma],\\
z_1(0)=0,
\end{cases}
\qquad
\begin{cases}
\dot{z_2}=h-c- q/z_2 \ &\mbox{ in } \ (\gamma,\alpha),\\
z_2<0 \ &\mbox{ in } \ [\gamma,\alpha),\\
z_2(\alpha)=0
\end{cases}
\end{equation}
and assume that there exists $z$ which satisfies \eqref{e:problem1} for some $c\in \R$. Thus, the restrictions of $z$ to $(0,\gamma)$ and $(\gamma,\alpha)$ satisfy  (respectively) $\eqref{e:problem2}_1$ and $\eqref{e:problem2}_2$.

Note that $z$ satisfies $\eqref{e:problem2}_1$ if and only if $\bar z=\bar z(\phi)$, defined for $\phi \in [\bar\gamma,1]$ as in \eqref{changes}, satisfies $\bar z(1)=0$ and $\eqref{e:problem0101}$ in $(\bar \gamma,1)$ with $\tilde h$, $\tilde q$ and $C=-c$, in place of $h$, $q$ and $c$. Since $\bar z(\bar\gamma)$ must equal $z(\gamma)<0$, we apply then Lemma \ref{lem:0101} $(d)$ to infer that $\bar z$ (and hence $z$) exists if and only if $-c < c^*(\tilde q;\tilde h; \bar\gamma,1)$. By \eqref{e:soglie1}, we obtain that $c>c_{1.1}^*$.
\par
We get $c<c_{1.2}^*$ directly from Lemma \ref{lem:0101} $(d)$ in $(\gamma,\alpha)$,  since $z$ satisfies $\eqref{e:problem2}_2$.
\par
Vice versa, assume $c_{1.1}^*<c_{1.2}^*$.  We define the function $F=F(c)$ by
\[ F(c):= z_1(c;\gamma) - z_2(c;\gamma), \ c \in (c_{1.1}^*, c_{1.2}^*),\]
where $z_1(c;\cdot)$ and $z_2(c; \cdot)$ are the solutions of $\eqref{e:problem2}_1$ and $\eqref{e:problem2}_2$; $F$ is well-defined by Lemma \ref{lem:0101} $(a)$ and $(d)$.  The aim of $F$ is now explained. Problem \eqref{e:problem1} has a solution associated to some $c\in\R$ if and only if $F(c)=0$. We claim that $F$ is strictly decreasing, $F$ is continuous and satisfies
\begin{equation}
\label{F changes sign}
 \lim_{c \to \left\{c_{1.1}^*\right\}^+}F(c) >0 \ \mbox{ and } \ \lim_{c \to \left\{c_{1.2}^*\right\}^-} F(c) < 0.
 \end{equation}
 Lemma \ref{lem:0101} $(b)$ applied to $\eqref{e:problem2}_2$ implies that $c\mapsto z_2(c;\gamma)$ is strictly increasing, since $c < c_{1.2}^*$. Instead, the application of Lemma \ref{lem:0101} $(b)$ to $\bar z_1$ implies that $c\mapsto z_1(c;\gamma)$ is strictly decreasing. Hence, $F$ is strictly decreasing in its domain, see Figure \ref{f:fz1z2}.

 %%%%%%%%%%%%%%%%
\begin{figure}[htb]
\begin{center}
\begin{tikzpicture}[>=stealth, scale=0.7]
%axes
\draw[->] (0,0) --  (6,0) node[below]{$\phi$} coordinate (x axis);
\draw[->] (0,0) -- (0,1) node[right]{$z$} coordinate (y axis);
\draw (0,0) -- (0,-2);
%z
\draw[thick] (0,0) .. controls (0.5,-1.3) and (1.5,-1.4) .. (2,-1.5) node[midway, below]{$z_1$}; %z_1
\draw[->] (1.8,-1.6) -- (1.8,-2.3);
\draw[thick] (2,-2) .. controls (2.5,-1.9) and (3.5,-1.5) .. (4,0) node[above]{\footnotesize{$\alpha$}} node[midway, below]{$z_2$};%z_2
\draw[->] (2.2,-1.8) -- (2.2,-1.1);
\draw[dotted] (2,0) node[above]{$\gamma$}-- (2,-2);
\draw (5,-0.1)--(5,0.1);
\draw (5,0) node[above]{\footnotesize{$1$}};
\end{tikzpicture}
\end{center}
\caption{\label{f:fz1z2}{Typical plots of the solutions $z_1$ to $\eqref{e:problem2}_1$ and $z_2$ to $\eqref{e:problem2}_2$. The arrows denote how curves changes as $c$ increases, see Lemma \ref{lem:problem1}.}}
\end{figure}
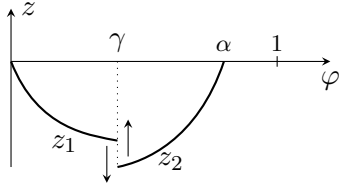
%%%%%%%%%%%%%%%%%%%%%%%%%%%%%%%%%%%%%%%%%%%%%%%%%%%%%

We prove now that $F$ is continuous. Fix $c_{1.1}^* < c < c_{1.2}^*$ and let $\{c_n\}_n$ be an increasing sequence converging to $c$. Define $z_n(\phi):=z_2(c_n;\phi)$, for $\phi \in [\gamma, \alpha]$. From Lemma \ref{lem:0101} $(b)$, $\{z_n\}$ is increasing in $[\gamma, \alpha)$ and is bounded from above by $z_2(c;\cdot)<0$. By applying \cite[Lemma 3.3]{BCM1} we deduce that $z_n$ converges in $[\gamma,\alpha]$ to $z_2(c;\cdot)$. In particular, $z_n(\gamma) \to z_2(c;\gamma)$. The same conclusion holds if $\{c_n\}_n$ is decreasing to $c$ (again for \cite[Lemma 3.3]{BCM1}). By repeating the same argument for $\bar z_1$, we then proved that $F$ is continuous. Lastly, Formula \eqref{F changes sign} follows from Lemma \ref{lem:0101} $(c)$. This completely proves our claim.

Hence, there exists a unique real $c=c_1^*$ such that $F(c)=0$ and the proof of the lemma is completed.
 %as follows. If $c=c_1^*$, then a (unique) solution $z$ of \eqref{e:problem1} is defined by pasting together $z_1(c_1^*;\cdot)$ and $z_2(c_1^*;\cdot)$. Indeed, such a $z$ must be of class $C^0[0,\alpha]$, by virtue of $F(c_1^*)=0$, and of class $C^1(0,\alpha)$ because
%\[ \lim_{\phi \to \gamma^-} \dot z(\phi)=h(\gamma)-c_1^*= \lim_{\phi \to \gamma^+} \dot z(\phi),\]
%by means of the differential equations in $\eqref{e:problem2}_1$ and $\eqref{e:problem2}_2$. In addition, $z$ clearly satisfies the differential equation in \eqref{e:problem1}, $z<0$ in $(0,\alpha)$, $z(0)=z(\alpha)=0$. Vice versa, if $c\neq c_1^*$ then it is not possible to have a solution $z$ of \eqref{e:problem1} because otherwise its restrictions to $(0,\gamma)$ and $(\gamma,\alpha)$ must respectively equal $z_1(c;\cdot)$ and $z_2(c;\cdot)$, with $z_1(c;\gamma^-)=z_2(c;\gamma^+)$. This clearly contradicts $F(c)\neq 0$.
\end{proof}

\begin{remark}
{
\rm
Estimates for $c_1^*$ follow from those for $c_{1.1}^*$ and $c_{1.2}^*$, which are in turn obtained from \eqref{e:soglie1} and Corollary \ref{cor:estimates}.
}
\end{remark}

We give the following result for \eqref{e:problem0}.

\begin{proposition}
\label{prop:1}
When $\alpha>\gamma$, Problem \eqref{e:problem0} admits a solution if and only if $c_{1.1}^*<c_{1.2}^*$ and $c_1^* \ge c_{3.2}^*$, where $c_1^*$ is given in Lemma \ref{lem:problem1}. In such a case, we necessarily have $c=c_1^*$ and the solution is unique.
\end{proposition}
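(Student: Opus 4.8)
The plan is to decouple Problem \eqref{e:problem0} into its restrictions to the two intervals $(0,\alpha)$ and $(\alpha,1)$, solve each by reducing it to the model problem \eqref{e:problem0101}, and then recombine. The first observation is that a function $z\in C^0[0,1]\cap C^1\left((0,1)\setminus\{\alpha\}\right)$ solves \eqref{e:problem0} if and only if $z|_{[0,\alpha]}$ solves \eqref{e:problem1} and $z|_{[\alpha,1]}$ solves
\[
\begin{cases}
\dot z=h-c-q/z &\mbox{ in }(\alpha,1),\\
z>0 &\mbox{ in }(\alpha,1),\\
z(\alpha)=z(1)=0;
\end{cases}
\]
indeed, \eqref{e:problem0} does not demand $C^1$-regularity at $\alpha$ and both pieces take the value $0$ there, so the gluing back and forth is immediate. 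By Lemma \ref{lem:problem1}, the first piece admits a solution if and only if $c_{1.1}^*<c_{1.2}^*$, in which case this occurs only for $c=c_1^*$ and the solution is unique.

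It remains to analyze the second piece. Since $\gamma<\alpha$ we have $q=Dg<0$ in $(\alpha,1)$, while $D(\alpha)=0$ and $g(1)=0$ give $q(\alpha)=q(1)=0$. Applying the change of variables \eqref{changes}, the function $w:=\tilde z$ on $[0,\bar\alpha]$ satisfies \eqref{e:problem0101} with $\bar h,\tilde q$ in place of $h,q$ and the same $c$, together with $w(0)=w(\bar\alpha)=0$; here $\tilde q>0$ in $(0,\bar\alpha)$ and $\tilde q(0)=\tilde q(\bar\alpha)=0$, so Lemma \ref{lem:0101} applies with $(\sigma_1,\sigma_2)=(0,\bar\alpha)$. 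Since $\alpha>\gamma$ forces $\bar\alpha=\min\{\bar\alpha,\bar\gamma\}$, the threshold \eqref{c*} for this reduced problem is precisely $c_{3.2}^*$, see \eqref{e:soglie1}. Let $\zeta_c$ be the solution of Lemma \ref{lem:0101}$(a)$, normalized by $\zeta_c(\bar\alpha)=0$: by the uniqueness therein every solution of the two-point problem for $w$ coincides with $\zeta_c$, and $\zeta_c$ solves it exactly when $\zeta_c(0)=0$. For $c>c_{3.2}^*$ this holds because $\zeta_c(0)\le 0$ while $\zeta_c(0)<0$ is excluded by the definition \eqref{c*} of $c_{3.2}^*$; for $c<c_{3.2}^*$ it fails, by the same definition. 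For $c=c_{3.2}^*$ (when finite) I would argue as in the continuity step of the proof of Lemma \ref{lem:problem1}: choosing $c_n\downarrow c_{3.2}^*$, the family $\{\zeta_{c_n}\}$ is decreasing and bounded below (say by $\zeta_{c_0}$ with $c_0<c_{3.2}^*$ fixed), hence converges in $C^0[0,\bar\alpha]$ by \cite[Lemma 3.3]{BCM1} to the solution of \eqref{e:problem0101} vanishing at $\bar\alpha$, i.e. to $\zeta_{c_{3.2}^*}$, whence $\zeta_{c_{3.2}^*}(0)=\lim_n\zeta_{c_n}(0)=0$. Thus the second piece is solvable if and only if $c\ge c_{3.2}^*$, and then uniquely; if $c_{3.2}^*=\infty$ it is never solvable.

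Combining the two pieces, \eqref{e:problem0} is solvable precisely when both are solvable for a common $c$, i.e. when $c_{1.1}^*<c_{1.2}^*$ (so that $c_1^*$ exists) and $c_1^*\ge c_{3.2}^*$; in that case the speed is necessarily $c_1^*$ and the solution, obtained by gluing at $c=c_1^*$ the unique solutions of the two pieces, is unique. The only point that is not entirely routine is establishing solvability of the $(\alpha,1)$-piece \emph{at} the threshold $c=c_{3.2}^*$, which is what produces the ``$\ge$'' rather than the weaker ``$>$''; this rests on the compactness argument of \cite[Lemma 3.3]{BCM1}, exactly as in Lemma \ref{lem:problem1}. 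The sign bookkeeping in the reduction \eqref{changes} also needs some care but is straightforward.
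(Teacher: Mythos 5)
Your proof is correct and follows essentially the same route as the paper's: split \eqref{e:problem0} at $\alpha$, apply Lemma \ref{lem:problem1} on $(0,\alpha)$ to pin down $c=c_1^*$, and reduce the piece on $(\alpha,1)$ via the reflection \eqref{changes} to \eqref{e:problem0101} on $(0,\bar\alpha)$, so that Lemma \ref{lem:0101} identifies the relevant threshold as $c_{3.2}^*$. You are in fact more explicit than the paper about the borderline case $c=c_{3.2}^*$ (the paper dispatches the converse with ``can be proved in a similar way''), and your monotone-limit argument there is consistent with how the paper itself invokes \cite[Lemma 3.3]{BCM1} in the proof of Lemma \ref{lem:problem1}.
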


\begin{proof}
Assume that \eqref{e:problem0} is solvable, for some value $c$. Then also \eqref{e:problem1} is solvable, for the same $c$, implying $c_{1.1}^*<c_{1.2}^*$ and $c=c_1^*$, by Lemma \ref{lem:problem1}. Notice that  $w(\phi):=\tilde{z}(\phi)$, defined as in \eqref{changes} , satisfies \eqref{e:problem0101} in $(0,\bar\alpha)$, with $\bar h, \tilde q$; hence, by Lemma \ref{lem:0101} $(d)$, we obtain that $c_1^* \ge c_{3.2}^*$. The converse implication can be proved in a similar way. The solution is unique by Lemma \eqref{e:problem0101}.
\end{proof}

%%%%%%%%%%%%%%%%%%%%%%

\subsection {The subcase $\alpha=\gamma$}

%%%%%%%%%%%%%%%%%
%\begin{figure}[htb]
%\begin{center}
%
%\begin{tikzpicture}[>=stealth, scale=0.5]
%%axes
%\draw[->] (0,0) --  (10,0) node[below]{$\rho$} coordinate (x axis);
%\draw[->] (0,0) -- (0,3) node[left]{$D,g$} coordinate (y axis);
%\draw (0,0) -- (0,-2);
%%
%\draw[thick] (0,1) .. controls (2,3) and (4,3) .. (6,0) node[above=3, left=2]{\footnotesize{$\alpha=\gamma$}} node[midway, above]{$D$};
%\draw[thick] (6,0) .. controls (7.1,-2) and (8,-2) .. (9,-1); %D
%\draw[dotted] (9,-1)--(9,0);
%%
%\draw[thick,dashed] (0,0) .. controls (2,-3) and (4,-3) .. (6,0) ;
%\draw[thick,dashed] (6,0) .. controls (7.1,2) and (8,2) .. (9,0) node[above]{\footnotesize{$1$}} node[midway,above]{$g$}; %g
%\filldraw[black] (6,0) circle (2pt);
%\end{tikzpicture}
%\end{center}
%\caption{\label{f:fDpn-a=g}{Case ${\rm (D_{pn})}$, subcase $\alpha=\gamma$.}}
%\end{figure}
%%%%%%%%%%%%%%%%%%%%%%%%%%%%%%%%%%%%%%%%%%%%%%%%%%%%%%

%{\boldmath Assume ${\rm (D_{pn})}$ and $\alpha=\gamma$}.

\begin{proposition}
\label{prop:2}
When $\alpha=\gamma$, then $c_{1.1}^*,c_{3.2}^*\in\R$ and Problem \eqref{e:problem0} admits solutions if and only if $c_{3.2}^* \le c_{1.1}^*$. In such a case, a (unique) solution exists for every $c_{3.2}^* \le c \le c_{1.1}^*$.
\end{proposition}

\begin{proof}
First, note that since $\dot {\tilde q} (\bar\alpha)=\dot D(\alpha) g(\alpha) =0$ then from Corollary \ref{cor:estimates} we deduce that $c_{1.1}^*\in \R$. Same arguments  imply that $c_{3.2}^* \in \R$.
Let $z$ be a solution to \eqref{e:problem0}, for some value $c$. Then $z$ is also a solution to \eqref{e:problem1}, for the same $c$. Hence the function $\bar{z}$ defined for $\phi \in [\bar\alpha,1]$ as in \eqref{changes}, satisfies $\bar z(1)=0$ and $\eqref{e:problem0101}$ in $(\bar \alpha,1)$ with $\tilde h$, $\tilde q$ and $-c$, in place of $h$, $q$ and $c$. We apply then Lemma \ref{lem:0101} $(d)$ to infer that $\bar z$ (and hence $z$) exists if and only if $-c \ge c^*(\tilde q;\tilde h; \bar\gamma,1)$. By \eqref{e:soglie1}, we obtain that $c\le c_{1.1}^*$.
\par Consider now the interval $[\alpha, 1]$; by applying  to $z$ the same arguments in Proposition \ref{prop:1} we get $c\ge c_{3.2}^*$. Hence $c_{3.2}^* \le c_{1.1}^*$ and $c_{3.2}^* \le c \le c_{1.1}^*$.  At last $z$ is unique, by Lemma \ref{lem:0101} $(a)$.

\par The converse implication can be obtained similarly.
\end{proof}

\subsection {The subcase $\alpha<\gamma$}
%%%%%%%%%%%%%%%%%
%\begin{figure}[htb]
%\begin{center}
%
%\begin{tikzpicture}[>=stealth, scale=0.5]
%%axes
%\draw[->] (0,0) --  (10,0) node[below]{$\rho$} coordinate (x axis);
%\draw[->] (0,0) -- (0,3) node[left]{$D,g$} coordinate (y axis);
%\draw (0,0) -- (0,-2);
%%
%\draw[thick,dashed] (0,0) .. controls (2,-3) and (4,-3) .. (6,0) node[above=3]{\footnotesize{$\gamma$}} node[midway, above]{$g$};
%\draw[thick,dashed] (6,0) .. controls (7.1,2) and (8,2) .. (9,0) node[above]{\footnotesize{$1$}}; %g
%\filldraw[black] (6,0) circle (2pt);
%%
%\draw[thick] (0,1) .. controls (1,2) and (1.9,2) .. (3,0) node[above=3]{\footnotesize{$\alpha$}};
%\draw[thick] (3,0) .. controls (5,-3) and (7,-3) .. (9,-1) node[midway,above]{$D$}; \filldraw[black] (3,0) circle (2pt);
%\draw[dotted] (9,-1)--(9,0);
%
%\end{tikzpicture}
%\end{center}
%\caption{\label{f:fDpn-a<g}{Case ${\rm (D_{pn})}$, subcase $\alpha<\gamma$.}}
%\end{figure}
%%%%%%%%%%%%%%%%%%%%%%%%%%%%%%%%%%%%%%%%%%%%%%%%%%%%%%
First, we restrict our attention to the sub-problem
\begin{equation}
\label{e:problem3}
\begin{cases}
\dot{z}=h-c-q/z \ &\mbox{ in } \ (\alpha,1),\\
z>0 \ &\mbox{ in } \ (\alpha,1),\\
z(\alpha)=z(1)=0,
\end{cases}
\end{equation}
which we further decompose as in the following lemma.

\begin{lemma}
\label{lem:c6}
We consider:
\begin{equation}
\label{e:problem6}
\begin{cases}
\dot{z_1}(\phi)=h(\phi)-c-\frac{q(\phi)}{z_1(\phi)}, \ &\phi\in (\alpha,\gamma),\\
z_1(\phi)>0 \ &\phi \in (\alpha,\gamma),\\
z_1(\alpha)=0,
\end{cases}
\qquad
\begin{cases}
\dot{z_2}(\phi)=h(\phi)-c-\frac{q(\phi)}{z_2(\phi)}, \ &\phi\in (\gamma,1),\\
z_2(\phi)>0,  \ &\phi \in (\gamma,1),\\
z_2(1)=0.
\end{cases}
\end{equation}
We have:
\begin{enumerate}[(1)]

\item Problem $\eqref{e:problem6}_1$ has solutions if and only if $c\le c_{3.1}^*$. For a given $c < c_{3.1}^*$ there exist infinitely many solutions $z_1$, one for each value $z_1(\gamma)$ in the interval $(0,\beta_1(c)]$, for some $\beta_1(c)>0$.

\item Problem $\eqref{e:problem6}_2$ has solutions if and only if $c\ge c_{3.2}^*$. For a given $c > c_{3.2}^*$ there exist infinitely many solutions $z_2$, one for each value $z_2(\gamma)$ in the interval $(0,\beta_2(c)]$, for some $\beta_2(c)>0$.
\end{enumerate}
\end{lemma}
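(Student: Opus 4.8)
The two problems $\eqref{e:problem6}_1$ and $\eqref{e:problem6}_2$ are essentially the same type of singular first-order boundary value problem already analyzed in Lemma \ref{lem:0101}, once we perform suitable changes of variables to normalize the sign of $z$ and the role of the endpoints. The strategy is to reduce each of them, via the transformations \eqref{changes}, to an instance of Problem \eqref{e:problem0101} with $z<0$ and the prescribed zero at the \emph{right} endpoint of the relevant interval, so that parts $(a)$, $(c)$ and $(d)$ of Lemma \ref{lem:0101} apply verbatim; the thresholds $c_{3.1}^*$ and $c_{3.2}^*$ were defined in \eqref{e:soglie1} precisely so as to match the quantity $c^*$ from \eqref{c*} for the transformed problems.

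For item $(2)$: here $z_2>0$ on $(\gamma,1)$ with $z_2(1)=0$, while $q=Dg>0$ on $(\gamma,1)$ since $\alpha<\gamma$. Setting $w(\phi):=\tilde z_2(\phi)=-z_2(1-\phi)$ for $\phi\in[0,\bar\gamma]$ (with $\bar\gamma=1-\gamma$), one checks directly that $w<0$ on $(0,\bar\gamma)$, that $w(0)=0$, and that $w$ solves $\eqref{e:problem0101}$ on $(0,\bar\gamma)$ with $\bar h$ and $\tilde q$ in place of $h$ and $q$, and with the \emph{same} speed $c$ (the sign changes in the reflection $\phi\mapsto 1-\phi$ and in the minus sign cancel out in the quotient term $q/z$, while the convective term transforms with $\bar h(\phi)=h(1-\phi)$). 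Thus a solution of $\eqref{e:problem6}_2$ with a prescribed value $z_2(\gamma)=s>0$ corresponds bijectively to a solution of $\eqref{e:problem0101}$ on $(0,\bar\gamma)$ with $w(0)=0$ and $w(\bar\gamma)=-s<0$. Lemma \ref{lem:0101} $(d)$, applied with $\sigma_1=0$, $\sigma_2=\bar\gamma$, $q\rightsquigarrow\tilde q$, $h\rightsquigarrow\bar h$ — whose associated threshold is exactly $c_{3.2}^*$ by \eqref{e:soglie1} — then gives: such solutions exist iff $c>c_{3.2}^*$ (for $c>c^*$ in the notation there), and then precisely for $-s\ge\beta(c)$, i.e. $s\in(0,\beta_2(c)]$ with $\beta_2(c):=-\beta(c)>0$. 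The endpoint case $c=c_{3.2}^*$ is handled by the same argument applied to $\zeta_c$ from Lemma \ref{lem:0101} $(a)$: when $c=c_{3.2}^*$ the solution $\zeta_{c}$ of the transformed problem satisfies $\zeta_{c}(0)=0$ by definition of $c^*$ as a supremum together with part $(b)$ (monotonicity in $c$) and part $(c)$ (the limit at $\sigma_1$), so $\eqref{e:problem6}_2$ is still solvable at $c=c_{3.2}^*$; this explains the non-strict inequality ``$c\ge c_{3.2}^*$'' in the statement while the ``infinitely many solutions'' claim is stated for $c>c_{3.2}^*$.

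For item $(1)$: now $z_1>0$ on $(\alpha,\gamma)$ with $z_1(\alpha)=0$, again with $q>0$ on $(\alpha,\gamma)$. Here no reflection is needed on the $\phi$-axis, but we still must flip the sign of $z$ and reverse orientation so that the known zero sits at the right endpoint. Setting $v(\phi):=-z_1(\phi)$ on $(\alpha,\gamma)$ gives $v<0$ there with $v(\alpha)=0$ and $\dot v=-h+c+q/v$, i.e. $v$ solves $\eqref{e:problem0101}$ on $(\alpha,\gamma)$ with $-h$ in place of $h$ and $-c$ in place of $c$; reversing the orientation of the interval (i.e. composing with $\phi\mapsto\alpha+\gamma-\phi$, which moves the zero to the right endpoint) brings it to the exact form of Lemma \ref{lem:0101} with speed $-c$, and the corresponding threshold $c^*(q;-h;\alpha,\gamma)$ is, by \eqref{e:soglie1}, $-c_{3.1}^*$. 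Hence $\eqref{e:problem6}_1$ is solvable iff $-c\ge c^*(q;-h;\alpha,\gamma)=-c_{3.1}^*$, i.e. $c\le c_{3.1}^*$, with the strict case $c<c_{3.1}^*$ yielding the one-parameter family $z_1(\gamma)\in(0,\beta_1(c)]$ from Lemma \ref{lem:0101} $(d)$, and $\beta_1(c):=-\beta(-c)>0$.

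The routine part is the bookkeeping of signs in the two changes of variables and the verification that the transformed data $(\bar h,\tilde q)$ and $(-h,q)$ satisfy the hypotheses of Lemma \ref{lem:0101} (continuity on the closed interval, positivity of $q$ in the interior, vanishing at both endpoints) — all immediate from (f), (g), (D) and $\alpha<\gamma$. The only genuinely delicate point is the borderline speed: one must be careful that the definitions \eqref{e:soglie1} single out $c^*$ as a supremum, so that solvability at $c=c_{3.1}^*$ (resp. $c=c_{3.2}^*$) does \emph{not} follow from part $(d)$ of Lemma \ref{lem:0101} alone but must be extracted from part $(a)$ together with the monotonicity $(b)$ and the limit $(c)$, which together force the boundary solution $\zeta_{c^*}$ to attain the value $0$ at the singular endpoint; this is the one step I would write out with care, and it is also where the asymmetry between the ``solvability'' claim ($c\le c_{3.1}^*$, $c\ge c_{3.2}^*$) and the ``infinitely many solutions'' claim (stated only for the strict inequalities) comes from.
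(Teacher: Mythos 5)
Your proposal is correct and follows essentially the same route as the paper: both reduce $\eqref{e:problem6}_1$ to Lemma \ref{lem:0101} $(d)$ by passing to $-z_1$ on $(\alpha,\gamma)$ with $-h$, $-c$, and $\eqref{e:problem6}_2$ by passing to $\tilde z_2$ on $(0,\bar\gamma)$ with $\tilde q$, $\bar h$, so that the thresholds defined in \eqref{e:soglie1} match exactly. One minor remark: the ``reverse orientation'' step you mention in item $(1)$ is unnecessary (and would scramble the data if actually carried out), since Lemma \ref{lem:0101} $(d)$ already places the prescribed zero at the \emph{left} endpoint $\sigma_1$, which is where $-z_1$ vanishes; your extra care at the borderline speeds is welcome, as the paper disposes of them only with a terse appeal to \eqref{c*}.
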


\begin{proof}
We first prove (1). We apply Lemma \ref{lem:0101} $(d)$ to $-z_1$. Indeed, $z_1$ satisfies $\eqref{e:problem6}_1$ if and only if $-z_1(\alpha)=0$ and $-z_1$ satisfies \eqref{e:problem0101} in $(\alpha,\gamma)$ with $-h$ and $-c$ in place of $h$ and $c$.  Hence, from Lemma \ref{lem:0101} $(d)$, such a $-z_1$ exists if and only if $-c \ge c^*\left(q; -h; \alpha,\gamma\right)$. Moreover, if $-c >c^*\left(q; -h; \alpha,\gamma\right)$ then $-z_1(\gamma)=s<0$ if and only if $s\ge \beta(-c)$, where $\beta(-c)<0$ is given in Lemma \ref{lem:0101} $(d)$
 and $\beta_1(c):=-\beta(-c)$ and by condition \eqref{c*} we complete Part $(1)$.

About (2), a function $z_2$ satisfies $\eqref{e:problem6}_2$ if and only if $\tilde {z_2}$ (defined as in \eqref{changes}) satisfies \eqref{e:problem0101} in $(0,\bar\gamma)$ with $\tilde q$ and $\bar h$ in place of $q$ and $h$ and $\tilde {z_2}(0)=0$. Therefore, Part $(2)$ follows by Lemma \ref{lem:0101} $(d)$ with $\beta_2(c):=-\beta(c)$, where $\beta(c)$ is relative to \eqref{e:problem0101} in $(0,\bar\gamma)$ with $\tilde q$ and $\bar h$.
\end{proof}

\begin{lemma}
\label{lem:c6final}
Problem \eqref{e:problem3}  has a solution if $c_{3.2}^* < c_{3.1}^*$ and $c_{3.2}^*< c < c_{3.1}^*$. Moreover, for every $c$ varying in such a range, define
\begin{equation}
\label{e:beta}
\beta(c):=\min\left\{\beta_1(c),\beta_2(c)\right\}.
\end{equation}
Then, there is a one-parameter family of solutions given by
\begin{equation}\label{e:zc2}
\mathcal{Z}_c=\left\{z_s: z_s(\gamma)=s, \ s\in (0,\beta(c)]\right\}.
\end{equation}
Conversely, if Problem \eqref{e:problem3} has a solution then $c_{3.2}^*\le c_{3.1}^*$ and $c_{3.2}^*\le c \le c_{3.1}^*$.
\end{lemma}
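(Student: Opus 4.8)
The plan is to realize solutions of \eqref{e:problem3} by gluing, at $\phi=\gamma$, a solution of $\eqref{e:problem6}_1$ to a solution of $\eqref{e:problem6}_2$ that share the same value at $\gamma$; the whole mechanism rests on the fact that $q(\gamma)=D(\gamma)g(\gamma)=0$ by assumption (g), which is precisely what makes the matching automatically $C^1$ at $\gamma$.

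For the sufficiency I would proceed as follows. Assume $c_{3.2}^*<c_{3.1}^*$ and fix $c\in(c_{3.2}^*,c_{3.1}^*)$, so that by Lemma \ref{lem:c6} the numbers $\beta_1(c)$ and $\beta_2(c)$, and hence $\beta(c)$ in \eqref{e:beta}, are well defined and positive. Given $s\in(0,\beta(c)]$: since $s\le\beta_1(c)$, Lemma \ref{lem:c6} $(1)$ provides the unique solution $z_1$ of $\eqref{e:problem6}_1$ with $z_1(\gamma)=s$; since $s\le\beta_2(c)$, Lemma \ref{lem:c6} $(2)$ provides the unique solution $z_2$ of $\eqref{e:problem6}_2$ with $z_2(\gamma)=s$. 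Set $z_s:=z_1$ on $[\alpha,\gamma]$ and $z_s:=z_2$ on $[\gamma,1]$, and check the requirements of \eqref{e:problem3}: continuity at $\gamma$ is immediate from $z_1(\gamma)=z_2(\gamma)=s$; positivity on $(\alpha,1)$ and the boundary values $z_s(\alpha)=z_s(1)=0$ are inherited from the two pieces; the ODE holds on $(\alpha,\gamma)\cup(\gamma,1)$ by construction. The one point that needs a word is differentiability at $\gamma$: since the solutions furnished by Lemma \ref{lem:c6} (via Lemma \ref{lem:0101} $(d)$) are $C^1$ up to $\gamma$, passing to the limit $\phi\to\gamma^\mp$ in the two equations and using $q(\gamma)=0$ and $z_s(\gamma)=s>0$ gives $\dot z_1(\gamma^-)=h(\gamma)-c=\dot z_2(\gamma^+)$, so $z_s\in C^1(\alpha,1)$ and solves \eqref{e:problem3}. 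As $s\mapsto z_s(\gamma)=s$ is injective, $\mathcal Z_c$ in \eqref{e:zc2} is a genuine one-parameter family of solutions; moreover it exhausts the solution set, because any solution $z$ of \eqref{e:problem3} restricts to solutions of $\eqref{e:problem6}_1$ and $\eqref{e:problem6}_2$ with common value $s:=z(\gamma)>0$, Lemma \ref{lem:c6} forces $s\le\min\{\beta_1(c),\beta_2(c)\}=\beta(c)$, and the uniqueness built into Lemma \ref{lem:c6} identifies $z$ with $z_s$.

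For the converse I would simply restrict: if \eqref{e:problem3} admits a solution $z$ for some $c$, then $z|_{(\alpha,\gamma)}$ solves $\eqref{e:problem6}_1$ and $z|_{(\gamma,1)}$ solves $\eqref{e:problem6}_2$, so Lemma \ref{lem:c6} $(1)$ gives $c\le c_{3.1}^*$ and Lemma \ref{lem:c6} $(2)$ gives $c\ge c_{3.2}^*$, whence $c_{3.2}^*\le c\le c_{3.1}^*$ and in particular $c_{3.2}^*\le c_{3.1}^*$. I do not expect a serious obstacle here: the argument is essentially careful bookkeeping for the gluing, and the only delicate issue, the $C^1$-matching at $\gamma$, is handed to us for free by $g(\gamma)=0$. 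The one subtlety worth flagging is the asymmetry between the strict inequalities in the sufficiency part (the parameters $\beta_1,\beta_2$ and the ``infinitely many solutions'' clause of Lemma \ref{lem:c6} are available only for $c$ strictly between the two thresholds) and the non-strict ones in the converse (mere solvability of each sub-problem holds up to and including the thresholds), which is exactly the asymmetry present in the statement.
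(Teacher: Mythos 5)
Your proposal is correct and follows essentially the same route as the paper: glue the solutions of $\eqref{e:problem6}_1$ and $\eqref{e:problem6}_2$ with common value $s\in(0,\beta(c)]$ at $\gamma$, using $q(\gamma)=0$ and $s>0$ to get the matching derivative $h(\gamma)-c$ from both sides, and obtain the converse by restricting a solution of \eqref{e:problem3} to the two subintervals and invoking Lemma \ref{lem:c6}. The only addition is your explicit remark that $\mathcal Z_c$ exhausts the solution set, which the paper leaves implicit but which follows from the uniqueness clause of Lemma \ref{lem:c6} exactly as you say.
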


\begin{proof}
Assume  $c_{3.1}^*> c_{3.2}^*$ and pick $c\in (c_{3.2}^*,c_{3.1}^*)$.
Fix also $0<s\le \beta(c)$. From Lemma \ref{lem:c6}, there are both a solution $z_1$ of $\eqref{e:problem6}_1$ and a solution $z_2$ of $\eqref{e:problem6}_2$ such that $z_1(\gamma)=s=z_2(\gamma)$. In addition, because both $q/z_1$ and $q/z_2$ tend  to $0$ when their arguments tend to $\gamma^\mp$, from the differential equations in $\eqref{e:problem6}_1$ and $\eqref{e:problem6}_2$ we obtain
$$
\lim_{\phi \to \gamma^-}
\dot{z_1}(\phi)=\lim_{\phi \to \gamma^+}
\dot{z_2}(\phi)=h(\gamma)-c.
$$
Define $z=z_1$ in $[\alpha,\gamma]$ and $z=z_2$ in $(\gamma,1]$. The function $z$ is a solution of \eqref{e:problem3} which satisfies $z(\gamma)=s$ and then the existence of the family $\mathcal{Z}_c$ in \eqref{e:zc2} follows as well.

It remains to prove the last part of the statement. Suppose that a solution $z$ of \eqref{e:problem3} exists for some $c$. Necessarily, we must have $z(\gamma)>0$ by $\eqref{e:problem3}_2$. Thus, setting $z_1(\phi)=z(\phi)$, for $\phi \in [\alpha,\gamma]$ and $z_2(\phi)=z(\phi)$, for $\phi \in [\gamma,1]$, we deduce that $z_1$ and $z_2$ satisfy $\eqref{e:problem6}_1$ and $\eqref{e:problem6}_2$, respectively. Hence, we deduce $c\le c_{3.1}^*$ from Lemma \ref{lem:c6} (1) and  $c\ge c_{3.2}^*$ from Lemma \ref{lem:c6} (2). Hence, $c_{3.1}^*\ge c_{3.2}^*$.
\end{proof}

%\begin{remark}
%{
%\rm
%We comment on the condition $c_{3.1}^* > c_{3.2}^*$. This is never the case when $f=0$, as follows from the chain of inequalities $c_{3.1}^*<0<c_{3.2}^*$, that holds in such a case. Instead, when $f\neq 0$, a sufficient condition is
%\begin{equation}
%\label{e:sufficient6}
%\inf_{(\alpha,\gamma]}\delta(f,\alpha)-2\sqrt{\sup_{(\alpha,\gamma]}\delta(q,\alpha)} > \sup_{[\gamma,1)}\delta(f,1)+2\sqrt{\sup_{[\gamma,1)}\delta(q,1)}.
%\end{equation}
%}
%\end{remark}

Lastly, we have the following result.

\begin{proposition}
\label{prop:3}
When $\alpha<\gamma$ we have $c_{1.1}^* \in \R$. Moreover, Problem \eqref{e:problem0} admits solutions if $c_{3.2}^*<\min\{c_{1.1}^*, c_{3.1}^*\}$ and no solutions if  $c_{3.2}^*>\min\{c_{1.1}^*, c_{3.1}^*\}$. In the former case, solutions exist for every $c$ such that
\begin{equation}
\label{e:range c}
c\in
\left\{
\begin{array}{ll}
(c_{3.2}^*, c_{1.1}^*] & \mbox{ if } \ c_{1.1}^* < c_{3.1}^*,
\\[2mm]
(c_{3.2}^*, c_{3.1}^*) & \mbox{ if } \ c_{1.1}^* \ge c_{3.1}^*.
\end{array}
\right.
\end{equation}
For any fixed such $c$, there is a one-parameter family of solutions in one-to-one correspondence to the family \eqref{e:zc2}.
\par
%\fn{L: interrompo qui la mia proposta di revisione della sezione 4; secondo me la parte successiva dell'enunciato andrebbe tolta perch\'{e} troppo tecnica; la dimostrazione di questa proposizione va modificata in ragione della diversa notazione, perch\'{e} non c'\`{e} l'ex  lemma 4.5 ed in conseguenza della decisione sulla parte finale dell'enunciato; }Moreover, if \fn{A.: In \cite[Proposition 6.3]{BCM1} we require that $\lim_{\phi \to 1^-} \frac{q(\phi)}{\phi-1}$ exists {\em finite}.}
%\begin{equation}
%\label{conditions2}
% \mbox{there exists } \ \lim_{\phi \to 1^-} \frac{q(\phi)}{\phi-1}, \ \int^{1}\frac{q(\sigma)}{\left(\sigma-1\right)^2}\,d\sigma >-\infty \ \mbox{ and } \ c_{3.2}^* > h(1),
% \end{equation}
%then Problem \eqref{e:problem0} admits solutions if and only if $c_{3.2}^* <  \min\{c_{1.1}^*, c_{3.1}^*\}$\fn{A.: forse bisogna specificare per quali $c$.}; if we also assume
%\begin{equation}
%\label{conditions1}
% \int_{\alpha}\frac{q(\sigma)}{\left(\sigma-\alpha\right)^2}\,d\sigma <+\infty \ \mbox{ and } \ c_{3.1}^* < h(\alpha),
% \end{equation}
%then Problem \eqref{e:problem0} admits solutions if and only if $c$ satisfies \eqref{e:range c}.
\end{proposition}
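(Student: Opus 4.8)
The plan is to split Problem \eqref{e:problem0} at the point $\alpha$ into the subproblem on $(0,\alpha)$, where $z<0$, and the subproblem \eqref{e:problem3} on $(\alpha,1)$, where $z>0$, and then to feed each of them to results already available --- Lemma \ref{lem:0101} (after a change of variable) for the first and Lemmas \ref{lem:c6}--\ref{lem:c6final} for the second --- following the pattern of Propositions \ref{prop:1} and \ref{prop:2}. Before that, I would record that $c_{1.1}^*\in\R$: when $\alpha<\gamma$ one has $\max\{\bar\alpha,\bar\gamma\}=\bar\alpha$, so by \eqref{e:soglie1} $c_{1.1}^*=-c^*(\tilde q;\tilde h;\bar\alpha,1)$, and by Corollary \ref{cor:estimates} it suffices to check $D^+\tilde q(\bar\alpha)<\infty$; since $\tilde q(\phi)=-D(1-\phi)g(1-\phi)$ with $D\in C^1[0,1]$ and $D(\alpha)=0$, the difference quotient $\tilde q(\phi)/(\phi-\bar\alpha)$ tends to the finite limit $\dot D(\alpha)g(\alpha)$ as $\phi\to\bar\alpha^+$, hence $D^+\tilde q(\bar\alpha)<\infty$.

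Next I would carry out the reduction. If $z$ solves \eqref{e:problem0} with speed $c$, then its restriction to $[0,\alpha]$ solves the subproblem $\dot z=h-c-q/z$, $z<0$ on $(0,\alpha)$, $z(0)=z(\alpha)=0$, in the class $C^0[0,\alpha]\cap C^1(0,\alpha)$, while its restriction to $[\alpha,1]$ solves \eqref{e:problem3} with the same $c$; the only point to check here is that at $\gamma$ the equation is non-singular, since $q(\gamma)=D(\gamma)g(\gamma)=0$ whereas $z(\gamma)>0$, so $\dot z$ extends continuously across $\gamma$. Conversely, given a solution of the first subproblem and a solution of \eqref{e:problem3} with the same $c$, gluing them along $\alpha$ (where both vanish) produces a function in $C^0[0,1]\cap C^1((0,1)\setminus\{\alpha\})$ that solves \eqref{e:problem0}. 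Hence \eqref{e:problem0} is solvable for a given $c$ if and only if both subproblems are.

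To solve the subproblem on $(0,\alpha)$ I would use the change of variable $\bar z(\phi):=z(1-\phi)$ on $[\bar\alpha,1]$, cf.\ \eqref{changes}: a computation as in the proof of Lemma \ref{lem:problem1} shows that $z$ solves it if and only if $\bar z$ satisfies \eqref{e:problem0101} on $(\bar\alpha,1)$ with $\tilde h,\tilde q$ and $-c$ in place of $h,q,c$, together with $\bar z(\bar\alpha)=\bar z(1)=0$; here $\tilde q>0$ on $(\bar\alpha,1)$ and $\tilde q(\bar\alpha)=\tilde q(1)=0$, so Lemma \ref{lem:0101} applies. By Lemma \ref{lem:0101}$(a)$ the requirement $\bar z(1)=0$ identifies $\bar z$ with the unique solution $\zeta_{-c}$ of \eqref{e:problem0101} vanishing at the right endpoint, so the remaining requirement $\bar z(\bar\alpha)=0$ reads $\zeta_{-c}(\bar\alpha)=0$; by the monotonicity of $c\mapsto\zeta_c(\sigma_1)$ in Lemma \ref{lem:0101}$(b)$ and the definition \eqref{c*} of $c^*$, this holds exactly when $-c\ge c^*(\tilde q;\tilde h;\bar\alpha,1)$, i.e.\ $c\le c_{1.1}^*$, in which case the solution is unique (the same reduction used for the one-sided problems in Lemma \ref{lem:c6}). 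For the subproblem on $(\alpha,1)$, Lemma \ref{lem:c6final} shows that \eqref{e:problem3} can be solvable only if $c_{3.2}^*\le c_{3.1}^*$ and $c_{3.2}^*\le c\le c_{3.1}^*$, and that it is indeed solvable --- with the one-parameter family $\mathcal{Z}_c$ of \eqref{e:zc2} --- whenever $c_{3.2}^*<c<c_{3.1}^*$.

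Finally I would combine the two necessary conditions $c\le c_{1.1}^*$ and $c_{3.2}^*\le c\le c_{3.1}^*$: \eqref{e:problem0} can be solvable only when $c_{3.2}^*\le\min\{c_{1.1}^*,c_{3.1}^*\}$, while if $c_{3.2}^*<\min\{c_{1.1}^*,c_{3.1}^*\}$ then, when $c_{1.1}^*<c_{3.1}^*$ one has $c_{3.2}^*<c_{1.1}^*<c_{3.1}^*$ so Lemma \ref{lem:c6final} applies up to and including $c=c_{1.1}^*$ and gives $c\in(c_{3.2}^*,c_{1.1}^*]$, whereas when $c_{1.1}^*\ge c_{3.1}^*$ the constraint $c\le c_{1.1}^*$ is automatic on $[c_{3.2}^*,c_{3.1}^*]$ and the binding one is $c<c_{3.1}^*$, giving $c\in(c_{3.2}^*,c_{3.1}^*)$; these are exactly the two alternatives in \eqref{e:range c}. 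If instead $c_{3.2}^*>\min\{c_{1.1}^*,c_{3.1}^*\}$, then either $c_{3.2}^*>c_{3.1}^*$ and \eqref{e:problem3} is never solvable, or $c_{3.2}^*>c_{1.1}^*$ and no $c$ can satisfy both $c\ge c_{3.2}^*$ and $c\le c_{1.1}^*$; in either case \eqref{e:problem0} has no solution. For any admissible $c$, the component of $z$ on $[0,\alpha]$ is unique while the component on $[\alpha,1]$ ranges exactly over $\mathcal{Z}_c$, so $z\mapsto z|_{[\alpha,1]}$ realizes the claimed one-to-one correspondence with \eqref{e:zc2}. The main obstacle I expect is bookkeeping rather than substance: keeping track of which endpoints of these intervals are actually attained, since Lemma \ref{lem:0101}$(d)$ and Lemma \ref{lem:c6final} are only sharp in their interiors --- which is precisely why the statement leaves the borderline case $c_{3.2}^*=\min\{c_{1.1}^*,c_{3.1}^*\}$ untreated.
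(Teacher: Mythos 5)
Your proposal is correct and follows essentially the same route as the paper's proof: finiteness of $c_{1.1}^*$ via Corollary \ref{cor:estimates}, splitting \eqref{e:problem0} at $\alpha$, solving the $(0,\alpha)$ piece through the change of variables and Lemma \ref{lem:0101} exactly as in the first half of the proof of Proposition \ref{prop:2} (yielding the constraint $c\le c_{1.1}^*$), taking the $(\alpha,1)$ piece and the family $\mathcal{Z}_c$ from Lemma \ref{lem:c6final}, and intersecting the two ranges to obtain \eqref{e:range c}. The paper merely compresses all of this into the set $J=(c_{3.2}^*,c_{3.1}^*)\cap(-\infty,c_{1.1}^*]$ and a citation of those earlier arguments, so your write-up is the same proof with the bookkeeping made explicit.
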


\begin{proof}
First, since $\dot{{q}}(\alpha)=\dot{D}(\alpha)g(\alpha)$, $c_{1.1}^* \in \R$ (see  Corollary \ref{cor:estimates}).
If we define formally
$$J=(c_{3.2}^*,c_{3.1}^*) \cap (-\infty,c_{1.1}^*],$$
we have that $J$ is not empty if and only if $c_{3.1}^*>c_{3.2}^*$ and $c_{1.1}^* > c_{3.2}^*$. In such a case, for every $c\in J$ there are solutions $z$ of \eqref{e:problem0}, given by pasting together the solution in $(0,\alpha)$, provided as in the first half of the proof of Proposition \ref{prop:2}, and one of the representative of the family of solutions in $(\alpha,1)$, given by Lemma \ref{lem:c6final}; see Figure \ref{f:zmanyalpha}. By Lemma \ref{lem:c6final} it follows as well that if Problem \eqref{e:problem0} has solutions then $c_{3.2}^* \le \min\{c_{1.1}^*, c_{3.1}^*\}$.

%%%%%%%%%%%%%%%%
\begin{figure}[htb]
\begin{center}
\begin{tikzpicture}[>=stealth, scale=0.6]
%axes
\draw[->] (0,0) --  (7,0) node[below]{$\phi$} coordinate (x axis);
\draw[->] (0,0) -- (0,2.5) node[right]{$z$} coordinate (y axis);
\draw (0,0) -- (0,-1);
%z
\draw[thick] (0,0) .. controls (0.7,-1.5) and (1.3,-1.5) .. (2,0) node[right=5, below=0]{\footnotesize{$\alpha$}};
\draw[thick] (2,0) .. controls (4,1) and (4.5,1) .. (6,0) node[below]{\footnotesize{$1$}}; %down
\draw[thick] (2,0) .. controls (4,2) and (4.5,2) .. (6,0); %medium
\draw[thick] (2,0) .. controls (4,3) and (4.5,3) .. (6,0); %high
\draw[dotted] (4,0) node[below]{$\gamma$} -- (4,2.2);
\end{tikzpicture}
\end{center}
\caption{\label{f:zmanyalpha}{The multiple solutions $z$ of Proposition \ref{prop:3}.}}
\end{figure}
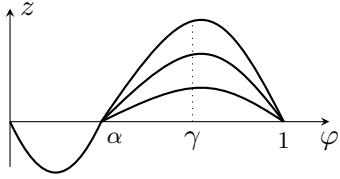
%%%%%%%%%%%%%%%%%%%%%%%%%%%%%%%%%%%%%%%%%%%%%%%%%%%%%

%We now prove the parts of the statement involving \eqref{conditions2} and \eqref{conditions1}; assume then that \eqref{e:problem0} has a solution $z$.
%
%First, we focus on $\tilde z$ in $(0,\bar\gamma)$, that is $z$ in $(\gamma,1)$, in the same spirit of the arguments that led to \eqref{e:c6.2}.  By virtue of \eqref{conditions2}, we apply \cite[Proposition 6.3]{BCM1} (with $\tilde q$, $\bar h$ and $c$ in place of $q$, $h$ and $c$ and with $c^*=c_{3.2}^*$) to infer that $c=c_{3.2}^*$ implies $\tilde z(\bar\gamma)=0$\fn{A.: $\tilde z(0)=0$}, necessarily. Hence $c>c_{3.2}^*$ by Lemma \ref{lem:c6}{\em (2)}, because $\tilde z (\bar\gamma)=-z(\gamma)<0$. Thus, to have solutions of \eqref{e:problem0} we necessarily need $c_{3.2}^* < c \le \min\{c_{1.1}^*, c_{3.1}^*\}$\fn{A.: Ma allora non vale l'affermazione \lq\lq then Problem \eqref{e:problem0} admits solutions if and only if $c_{3.2}^* <  \min\{c_{1.1}^*, c_{3.1}^*\}$\rq\rq\ contenuta nell'enunciato.}.
%
%Second, analogously, since $q=Dg$ is differentiable at $\alpha$, under \eqref{conditions1} we apply \cite[Proposition 6.3]{BCM1} (with $c^*=-c_{3.1}^*$) to $-z$ in $(\alpha,\gamma)$. Because $-z$ satisfies $\eqref{e:problem0101}_1$ with $-h$ and $C=-c$, the application of \cite[Proposition 6.3]{BCM1} (and $-z(\gamma)<0$)  leads to $c<c_{3.1}^*$. This concludes the proof.
\end{proof}

We refer to Figure \ref{f:fthre} for a pictorial representation of the thresholds occurring in Propositions \ref{prop:1}, \ref{prop:2}, \ref{prop:3}.

\begin{remark}
\label{rem:(c)boundedinterval}
{
\rm
Proposition \ref{prop:3} implies that Problem \eqref{e:problem0} has solutions only if $c_{3.2}^* \le c \le \min\{c_{1.1}^*, c_{3.1}^*\}$.  Since $\dot q (\alpha)=\dot D(\alpha)g(\alpha)\in \R$, from \eqref{e:c*below} and \eqref{e:soglie1} we deduce that $c_{3.1}^*$ and $c_{1.1}^*$ must be real numbers. From  \eqref{c*} (and \eqref{e:soglie1}) applied to $c_{3.2}^*$, we deduce $ c_{3.2}^*>-\infty$. Thus, the values $c$ for which Problem \eqref{e:problem0} is solvable vary in a bounded interval.
}
\end{remark}

\begin{remark}
\label{rem:problem0prime}
{
\rm
We now discuss  the solvability of \eqref{e:problem0prime}. First, by properly adapting the proofs of Lemma \ref{lem:c6} and Proposition \ref{prop:3} we infer that Problem \eqref{e:problem0prime} has solutions if and only if $c_{3.2}^* \le \min\{c_{1.1}^*,c_{3.1}^*\}$ and if $c$ lies in that range. Secondly, for each such $c$, Problem \eqref{e:problem0prime} admits the solution satisfying $z(\gamma)=0$, which does not solve \eqref{e:problem0}.
%
%We point out that \eqref{e:problem0} is a particular case of \eqref{e:problem0prime}: in fact in \eqref{e:problem0} we require $z(\gamma)>0$, while in \eqref{e:problem0prime} it follows $z(\gamma)\ge 0$. Indeed, if in \eqref{e:problem0prime} we have $z(\gamma)>0$, then $z$ belongs to $C^1(\alpha,1)$, because $\dot z(\gamma^\pm)=h(\gamma)-c$, and hence $z$ satisfies \eqref{e:problem0}.  Note also that, from Lemma \ref{lem:c6} and Proposition \ref{prop:3}, problem \eqref{e:problem0prime} is solvable (with, in particular, $z(\gamma)=0$) if and only if $c_{3.2}^* \le c \le \min\{ c_{3.1}^*, c_{1.1}^*\}$. Hence, Problems \eqref{e:problem0} and \eqref{e:problem0prime} are not equivalent.
}
\end{remark}

%%%%%%%%%%%%%%%%
\begin{figure}[htb]
\begin{center}
\begin{tikzpicture}[>=stealth, scale=0.6]
%above
\draw[->] (0,0) node[left]{\fs$(0,\alpha)$} --  (14,0) node[right]{$c$} coordinate (x axis); %above
\filldraw[black] (6,0) circle (2pt) node[above]{$c_{1,1}^*$}; 	
\draw (5.5,0.5) node[left]{\fs$(0,\gamma)$};
\filldraw[black] (9,0) circle (2pt) node[above]{$c_1^*$};
\filldraw[black] (12,0) circle (2pt) node[above]{$c_{1,2}^*$};
\draw (12.5,0.5) node[right]{\fs$(\gamma,\alpha)$};
\draw (16,-0.4) node[right]{$\alpha>\gamma$}; %right
\draw[->] (0,-0.8) node[left]{\fs$(\alpha,1)$} --  (14,-0.8) node[right]{$c$} coordinate (x axis); %below
\draw[very thick] (2,-0.8) -- (14,-0.8);
\filldraw[black] (2,-0.8) circle (2pt) node[above]{$c_{3.2}^*$};
%curves
\draw[<-] (2,-1) .. controls (2.2,-2.2) and (2.4,-2.6) .. (2.8,-3.2);
\draw[<-] (3,-3.5) .. controls (3.2,-3.7) and (3.3,-5.6) .. (3.8,-6);
\draw[->] (6,-0.2) .. controls (6.1,-1.2) and (6.3,-2) .. (6.8,-2.3);
\draw[->] (7,-2.7) .. controls (7,-3.5) and (7,-4) .. (7.8,-5.1);

%middle
\begin{scope}[yshift=-2.5cm]
\draw[->] (0,0) node[left]{\fs$(0,\alpha)$} --  (14,0) node[right]{$c$} coordinate (x axis); %above
\filldraw[black] (7,0) circle (2pt) node[above]{$c_{1.1}^*$};
\draw[very thick] (0,0) -- (7,0);
\draw (16,-0.4) node[right]{$\alpha=\gamma$}; %right
\draw[->] (0,-0.8) node[left]{\fs$(\alpha,1)$} --  (14,-0.8) node[right]{$c$} coordinate (x axis); %below
\draw[very thick] (3,-0.8) -- (14,-0.8);
\filldraw[black] (3,-0.8) circle (2pt) node[above]{$c_{3.2}^*$};
\end{scope}

%below
\begin{scope}[yshift=-5.3cm]
\draw[->] (0,0) node[left]{\fs$(0,\alpha)$} --  (14,0) node[right]{$c$} coordinate (x axis); %above
\draw[very thick] (0,0) -- (8,0);
\filldraw[black] (8,0) circle (2pt) node[above]{$c_{1.1}^*$};
\draw (16,-0.4) node[right]{$\alpha<\gamma$}; %right
\draw[->] (0,-0.8) node[left]{\fs$(\alpha,1)$} --  (14,-0.8) node[right]{$c$} coordinate (x axis); %below
\draw[very thick] (4,-0.8) -- (10,-0.8) node[midway, above]{$\infty$ solutions};
\draw (3.5,-0.4) node[left]{\fs$(\gamma,1)$};
\filldraw[black] (4,-0.8) circle (2pt) node[above]{$c_{3.2	}^*$};
\filldraw[black] (10,-0.8) circle (2pt) node[above]{$c_{3.1}^*$};
\draw (10.5,-0.4) node[right]{\fs$(\alpha, \gamma)$};
\end{scope}
\end{tikzpicture}
\end{center}
\caption{\label{f:fthre}{The speed thresholds in Propositions \ref{prop:1}, \ref{prop:2}, \ref{prop:3}. Pairs as $(0,\alpha)$ refer to the interval where the corresponding first-order problems are considered. Thick lines represent the values of $c$ for which the corresponding first-order problem has a solution.}}
\end{figure}
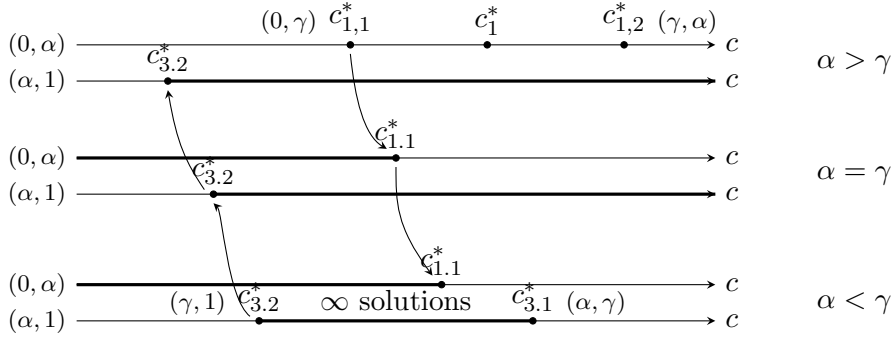
%%%%%%%%%%%%%

\section{Proofs of the main results}
\label{s:proofs}
\setcounter{equation}{0}

In this section we prove Proposition \ref{prop:suff} and Theorems \ref{th:multiplicity}, \ref{thm:f convex}, \ref{th:sufficient conditions}.
\begin{proof}[Proof of Proposition \ref{prop:suff}]
Assume $(a)$ and let $z$ satisfy \eqref{e:problem0}. Define $\phi_{1,\alpha}=\phi_{1,\alpha}(\xi)$ and $\phi_{\alpha,0}=\phi_{\alpha,0}(\xi)$ (see Figure \ref{f:2phis}) as the solutions of
\begin{equation}
\label{e:cauchyEQ}
\phi\rq{}=\frac{z(\phi)}{D(\phi)},
\end{equation}
with initial data (respectively)
\begin{equation}
\label{e:cauchyCI}
\phi_{1,\alpha}(0)=\frac{\alpha+1}{2} \ \mbox{ and } \ \phi_{\alpha,0}(0)=\frac{\alpha}{2}.
\end{equation}

%%%%%%%%%%%%%%%%
\begin{figure}[htb]
\begin{center}

\begin{tikzpicture}[>=stealth, scale=0.3]
%axes
\draw[->] (0,0) --  (10,0) node[below]{$\xi$} coordinate (x axis);
\draw (-10,0) -- (0,0);
\draw[->] (0,0) -- (0,8) node[left]{$\phi$} coordinate (y axis);
\draw (-10,7) -- (10,7);
\draw (-10,4) -- (10,4);
\draw (-0.2,4.3) node[right]{\footnotesize{$\alpha$}};
% above
\draw[thick] (-10,7) -- (-7,7);
\draw[dotted] (-7,0) node[below]{\footnotesize{$\xi_1$}}-- (-7,7);
\draw[thick] (-7,7) .. controls (-5,6) and (-2,6) .. (0,5.5) node[midway,below]{\footnotesize{$\phi_{1,\alpha}$}};
\draw (-0.2,5.8)  node[right]{\footnotesize{$(\alpha+1)/2$}};
\draw[thick] (0,5.5) .. controls (2,5) and (3,4.5) .. (4,4);
\draw[thick] (4,4) -- (10,4);
\draw[dotted] (4,0) node[below]{\footnotesize{$\xi_\alpha^1$}}-- (4,4);
%below
\draw[thick] (-10,4) -- (-3,4);
\draw[dotted] (-3,0) node[below]{\footnotesize{$\xi_\alpha^2$}}-- (-3,4);
\draw[thick] (-3,4) .. controls (-2,3) and (-1,2.5) .. (0,2) node[midway,below]{\footnotesize{$\phi_{\alpha,0}$}};
\draw (-0.2,2.3)  node[right]{\footnotesize{$\alpha/2$}};
\draw[thick] (0,2) .. controls (1,1.5) and (3,0.5) .. (7,0);
\draw[thick] (7,0) node[below]{\footnotesize{$\xi_0$}}-- (10,0);

\end{tikzpicture}
\end{center}
\caption{\label{f:2phis}{The profiles $\phi_{1,\alpha}$ and $\phi_{\alpha,0}$.}}
\end{figure}
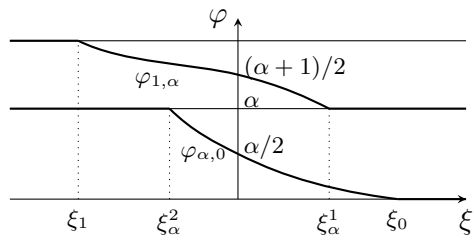
%%%%%%%%%%%%%%%%%%%%%%%%%%%%%%%%%%%%%%%%%%%%%%%%%%

Since the right-hand side of \eqref{e:cauchyEQ} is locally of class $C^1$, then $\phi_{1,\alpha}$ and $\phi_{\alpha,0}$ exist and are unique. Let $\phi_{1,\alpha}$ and $\phi_{\alpha,0}$ be maximally defined in $(\xi_1, \xi_\alpha^1) \subset \R$ and  $(\xi_\alpha^2,\xi_0) \subset \R$, respectively, with
\[
-\infty \le \xi_1 < 0 < \xi_\alpha^1 \le \infty, \quad
-\infty \le \xi_\alpha^2 < 0 < \xi_0 \le \infty,
\]
and satisfying
\[ \lim_{\xi \to \xi_1^+}\phi_{1,\alpha}(\xi)=1, \ \lim_{\xi \to \{\xi_\alpha^1\}^-}\phi_{1,\alpha}(\xi)=\alpha,\]
\[ \lim_{\xi \to \{\xi_\alpha^2\}^+}\phi_{\alpha, 0}(\xi)=\alpha, \ \lim_{\xi \to \xi_0^-}\phi_{\alpha, 0}(\xi)=0.\]
Moreover, from their definitions, we have
\[ \phi_{1,\alpha}\rq{}(\xi)<0 \ \mbox{ if } \ \phi(\xi) \in (\alpha,1) \quad \mbox{ and } \quad  \phi_{\alpha,0}\rq{}(\xi)<0 \ \mbox{ if } \ \phi(\xi) \in (0,\alpha).\]
Since $\phi_{1,\alpha}$ satisfies \eqref{e:ODE} and $z(1)=0$, if $\xi_1 \in \R$ then it can be constantly extended up to $(-\infty, \xi_\alpha^1)$ by setting $\phi_{1,\alpha}(\xi)=1$, for $\xi \le \xi_1$. Analogously, if $\xi_0\in\R$ then $\phi_{\alpha,0}$ can be extended to $(\xi_\alpha^2,\infty)$, by setting $\phi_{\alpha,0}(\xi)=0$, $\xi \ge \xi_0$ (see Definition \ref{d:tws}). In order to glue together $\phi_{1,\alpha}$ and $\phi_{\alpha,0}$ (after space shifts if needed), we need to prove
\begin{equation}
\label{claim}
\xi_\alpha^1 \in\R \ \mbox{ and } \ \xi_\alpha^2 \in \R,
\end{equation}
Consider $\phi_{\alpha,0}$. The strict monotonicity of $\phi_{\alpha,0}$ leads to define a function $\xi:(0,\alpha) \to (\xi_\alpha^2,\xi_0)$ such that $\xi\left(\phi_{\alpha,0}(\xi)\right)=\xi$, for every $\xi \in (\xi_\alpha^2,\xi_0)$, by means of which we have
\begin{equation}
\label{e:xialpha finite}
\xi_\alpha^2=\int_{\frac{\alpha}{2}}^{\alpha}\dot{\xi}(\phi)\,d\phi=\int_{\frac{\alpha}{2}}^{\alpha}\frac{D(\phi)}{z(\phi)}\,d\phi,
\end{equation}
since $\phi_{\alpha,0}(0)=\alpha/2$. Note, $D/z$ is well-defined in $[\frac{\alpha}{2},\alpha)$ and $\xi_\alpha^2 <0$. Hence, to prove that \eqref{e:xialpha finite} is finite we need to study the behavior of $D/z$  in a left-neighborhood of  $\alpha$.
Analogously, by starting from $\phi_{1,\alpha}$ in place of $\phi_{\alpha,0}$, we have that
\begin{equation}
\label{e:xialpha1 finite}
\xi_\alpha^1=-\int_{\alpha}^{(\alpha+1)/2}\frac{D(\phi)}{z(\phi)}\,d\phi.
\end{equation}
Hence, $\xi_\alpha^1 >0$ is finite if and only if it is so the integral of $D/z$ in a right-neighborhood of $\alpha$. Observe that
\[
\lim_{\xi \to \left\{\xi_\alpha^1\right\}^-}D\left(\phi_{1,\alpha}(\xi)\right)\phi_{1,\alpha}\rq{}(\xi)=0 = \lim_{\xi \to \left\{ \xi_\alpha^2\right\}^+} D\left(\phi_{\alpha,0}(\xi)\right)\phi_{\alpha,0}\rq{}(\xi),
\]
because $z$ satisfies $\eqref{e:problem0}_4$. Pasting together $\phi_{1,\alpha}$ and $\phi_{\alpha,0}$ (after shifts if needed) produces a function $\phi$ satisfying \eqref{e:ODE} (in the sense of Definition \ref{d:tws}) in $\R$, associated to the wave speed $c$, and \eqref{e:infty}. Therefore, if we prove that \eqref{claim} holds under $(a.i)$ or $(a.ii)$, then the existence of a profile $\phi$ follows as well.
\smallskip

({\em Proof of \eqref{claim} under $(a.i)$}) We treat separately the cases $\alpha>\gamma$ and $\alpha<\gamma$.

Assume first $\alpha>\gamma$. We note that $D>0$ in $(\gamma,\alpha)$, $D(\alpha)=0$ and $g>0$ in $(\gamma,\alpha]$; moreover, $z$ satisfies \eqref{e:problem0101} in $(\gamma,\alpha)$ and $z(\alpha)=0$. From \cite[formula (9.2)]{BCM1} we deduce that $D(\phi)/z(\phi)$ has finite limit as $\phi \to \alpha^-$. This in turn implies (from \eqref{e:xialpha finite}) that $\xi_\alpha^2 >-\infty$. Indeed, we note that \cite[formula (9.2)]{BCM1} was proved for solutions $z$ vanishing at both the extrema and hence it was assumed $c$ greater than a proper threshold. Nonetheless, the same proof provides \cite[formula (9.2)]{BCM1} in case $z$ vanishing only at the right extremum, and any $c$.  The same argument as above, applied to $\tilde z$ in $(0,\bar\alpha)$, implies $\xi_\alpha^1 \in \R$. In fact, we have $\tilde z(\bar\alpha)=0$ and $\tilde z$ satisfies \eqref{e:problem0101} in $(0,\bar\alpha)$, with $q=\tilde D \bar g$, $\bar h$ and $c$; more importantly, $\tilde D$ and $\bar g$ in $(0,\bar\alpha]$ satisfy the same sign assumptions as $D$ and $g$ in $(0,\alpha]$.

Suppose $\alpha<\gamma$. Note that $\bar D>0$ in $(\bar\alpha,1)$, $\bar D (\bar\alpha)=0$, $\tilde g>0$ in $[\bar\alpha,1)$ and $\bar z$ satisfies \eqref{e:problem0101} with $q=\bar D \tilde g$ and $\tilde h$, $C=-c$. As a consequence, the main assumptions of \cite{CM-DPDE} are satisfied with $(\bar\alpha,1)$ here replacing $(0,\bar\rho)$ there. Then, an application of \cite[proof of Parts $(b)$, $(c)$ and $(d)$ in Theorem 2.5]{CM-DPDE} implies that $\bar D/\bar z$ must be finite limit approaching $\bar \alpha^+$. This in turn implies that  $\xi_\alpha^2 >-\infty$. Similarly, since  $-z$ satisfies \eqref{e:problem0101} in $(\alpha, \gamma)$ with $q=Dg$, $-h$ and $C=-c$, we apply the same arguments as above, based on \cite[proof of Parts $(b)$, $(c)$ and $(d)$ in Theorem 2.5]{CM-DPDE}, to deduce that $\xi_\alpha^1<\infty$.

\smallskip

({\em Proof of \eqref{claim} under $(a.ii)$}) Suppose now $\alpha=\gamma$. Observe, the function $w=\bar z$ in $(\bar\alpha,1)$ satisfies \eqref{e:problem0101} with $q=\bar D \tilde g$, $\tilde h$ and $C=-c$. Moreover, $\bar D >0$ in $(\bar\alpha,1)$, $\bar D(\bar\alpha)=0$, $\tilde g >0$ in $(\bar\alpha,1)$ and $\tilde g(\bar \alpha)=0$. From \cite[Proposition 8.2]{BCM1} with $c^*=-c_{1.1}^*$, we deduce then that if $c=c_{1.1}^*$ (and hence $C=-c_{1.1}^*$) then $\dot w(\bar\alpha^+)<0$. This at once implies that $\dot z(\alpha^-)>0$ and hence that $\xi_\alpha^2 \in \R$. Otherwise, $C> -c_{1.1}^*$ and  $\dot w(\bar\alpha^+)=0$, again from \cite[Proposition 8.2]{BCM1}, because $\dot q(\alpha)=D(\alpha)g(\alpha)=0$. In this case, we argue as follows. Observe that, from \eqref{e:c*below} and \eqref{e:soglie1}, we have $c_{1.1}^*\le h(\alpha)$. Hence, $c=-C<h(\alpha)$. Define
\[ \zeta(\psi):= -k \bar D(\psi) \left(\psi-\bar\alpha\right)^\tau,  \ \psi \in (\bar\alpha,1),\]
where $\tau \in (0,1)$ is the same as in \eqref{integrability g} and $k>0$ is to be determined. We have
\[ \dot \zeta (\psi)=-k \dot{\bar D}(\psi)\left(\psi-\bar\alpha\right)^\tau -\tau k \frac{\bar D(\psi)}{\psi-\bar\alpha} \left(\psi-\bar\alpha\right)^\tau \to 0 \ \mbox{ as } \ \psi \to \bar\alpha^+,\]
while
\begin{equation}
\label{omega1}
\tilde h(\psi)-C-\frac{\bar D(\psi)\tilde g(\psi)}{\omega(\psi)} = \tilde h(\psi)+c +\frac{\tilde g(\psi)}{k\left(\psi-\bar\alpha\right)^\tau} \ge  \tilde h(\psi) +c +\frac{L}{k} \to -h(\alpha)+c+\frac{L}{k},
\end{equation}
in virtue of \eqref{integrability g}. If $k>0$ is chosen (small enough) such that $c-h(\alpha)+L/k >0$, then $\zeta$ is a strict lower-solution of $\eqref{e:problem0101}_1$ in a right-neighborhood of $\bar\alpha$.  On the other hand, since $\dot w(\bar\alpha)=0$ then there exists a sequence $\theta_n \in (\bar\alpha,1)$ such that $\theta_n \to \bar\alpha$ and $\dot w(\theta_n)\to 0$, as $n \to \infty$. This implies that
\begin{equation}
\label{omega2}
\frac{-\zeta(\theta_n)}{-w(\theta_n)} =\frac{k\left(\theta_n-\bar\alpha\right)^\tau}{\tilde g(\theta_n)}\frac{\bar D(\theta_n)\tilde g(\theta_n)}{-w(\theta_n)}\le \frac{k}{L}\frac{\bar D(\theta_n)\tilde g(\theta_n)}{-w(\theta_n)} \to  \frac{k}{L}\left(h(\alpha)-c\right)<1 \ \mbox{ as } \ n\to \infty,
\end{equation}
since $w$ satisfies $\dot w =\tilde h +c -\bar D \tilde g/w$. Then  $-\zeta(\theta_n) < -w(\theta_n)$ for each $n$. Thus, from comparison-type results (see \cite[Lemma 3.2 $(2.b.i)$]{BCM1}), we conclude that $\zeta>w$ in a right-neighborhood of $\bar\alpha$, which means that
\[
z(\phi) < -kD(\phi) \left(\alpha-\phi\right)^\tau \ \mbox{ for $\phi$ in a left-neighborhood of $\alpha$.}
\]
This and \eqref{e:xialpha finite} imply that $\xi_\alpha^2$ is finite.

We now prove that $\xi_\alpha^1$  is finite. We first observe that the function $\psi(\xi)$ defined as $\psi(\xi):=1-\phi_{1,\alpha}(-\xi)$, for $\xi \in (-\xi_\alpha^1,\infty)$, must satisfy
\begin{equation}
\label{e:ODE2}
\left(\tilde D(\psi)\psi\rq{}\right)\rq{} +\left(c-\bar h(\psi)\right)\psi\rq{}+\bar g(\psi)=0,
\end{equation}
 $\psi \in (0,\bar \alpha)$, $\psi(-\xi_\alpha^1)=\bar\alpha$ and $\psi(\infty)=0$. Hence, $\psi$ is a wave profile, connecting $\bar \alpha$ to $0$, with wave speed $c$ associated to \eqref{e:E} with $\tilde D$, $\bar g$ and $\bar h$. Moreover, in virtue of \eqref{integrability g}, a direct check permits to apply \cite[Theorem 2.5 (ii)]{CdRM} and to infer that $\psi$ must be non-strictly monotone, in the sense that it reaches $\bar \alpha$ in a finite value. This clearly means also that $\phi_{1,\alpha}$ reaches $\alpha$ at a finite value. From the very definition of $\xi_\alpha^1$ this implies that $\xi_\alpha^1 \in \R$.

%The function $w=\tilde z$ satisfies \eqref{e:problem0101} in $(0,\bar\alpha)$ with $q=\tilde D \bar g$, $\bar h$ and $c$, and  $\tilde D >0$ in $(0,\bar\alpha)$, $\tilde D(\bar\alpha)=0$, $\bar g>0$ in $(0,\bar\alpha)$ and $\bar g (\bar\alpha)=0$. Hence, we can apply \cite[formula (7.2)]{BCM1} to infer that $\dot w(\bar \alpha^-)=\bar h(\bar\alpha)-c >0$, since $c < \bar h (\bar\alpha)=h(\alpha)$, where it is necessary the second assumption of $(b)$. We observe that to apply \cite[formula (7.2)]{BCM1} we actually do not need of the last hypothesis of \cite[(q)]{BCM1}, which here it is not necessarily true. Thus, we obtain $D(\phi)/z(\phi) \to \dot{D}(\alpha)/\left(h(\alpha)-c\right)$, as $\phi \to \alpha^+$, which is a finite value. As a consequence, by \eqref{e:xialpha1 finite}, $\xi_\alpha^1$ must be finite.

\smallskip
It remains to prove the conclusion of Proposition \ref{prop:suff} under $(b)$. Assume then that  $\alpha<\gamma$ and  $z$ satisfies \eqref{e:problem0prime} with $z(\gamma)=0$. Since, in this case, for $\phi \in (\alpha,1)$ the right-hand side of \eqref{e:cauchyEQ} is necessarily a $C^1$-function only on $(\alpha,\gamma)\cup(\gamma,1)$, we get (by solving \eqref{e:cauchyEQ} with suitable initial conditions) two functions $\phi_{1,\gamma}$ and $\phi_{\gamma,\alpha}$ solving \eqref{e:ODE} with $\phi_{1,\gamma} \in (\gamma,1)$ and $\phi_{\gamma, \alpha}\in (\alpha,\gamma)$. Hence, what we need now is only to prove that $\phi_{1,\gamma}$ and $\phi_{\gamma,\alpha}$ reach $\gamma$ at finite values. To this end, we observe that we can apply a result from \cite{CM-DPDE}. Indeed, it is sufficient to observe that $\phi_{\gamma, \alpha}$ is a wave profile (with speed $c$), connecting $\gamma$ to $\alpha$ for \eqref{e:E} if and only if it is a wave profile (with speed $-c$) for \eqref{e:E} with $-D$, $-g$ and $-h$. Since $-D>0$ in $(\alpha,\gamma)$, $-D(\gamma)>0$, $-g>0$ in $[\alpha,\gamma)$, $-g(\gamma)=0$  and \eqref{integrability g} holds, then in the interval $[\alpha, \gamma]$ we are under the assumptions of \cite[Theorem 2.9 $(ii)$]{CM-DPDE}. As a consequence, $\phi_{\gamma, \alpha}$ must reach $\gamma$ at a finite value. The same argument can be followed to deduce that also $\phi_{1,\gamma}$ must reach $\gamma$ at a finite value, by starting from $\psi(\xi):=1-\phi_{1,\gamma}(-\xi)$, which is a wave profile, connecting $\bar\gamma$ to $0$, of \eqref{e:E} with $\tilde{D}$ and $\bar g$. Since, as before, $\xi_{\alpha}^2$ is finite, pasting together $\phi_{1,\gamma}$, $\phi_{\gamma,\alpha}$ and $\phi_{\alpha,0}$ we conclude the proof.
\end{proof}

\begin{remark}
\label{rem:interval}
{
\rm
Consider the case $\alpha=\gamma$. Proposition \ref{prop:suff} shows that every $c$, for which \eqref{e:problem0} has a solution, is admissible, under assumption \eqref{integrability g}. Without requiring \eqref{integrability g}, we now prove that if $c_1<c_2$ are admissible speeds, then every $c \in (c_1,c_2)$ does.

First, we observe that necessarily $[c_1,c_2]\subseteq [c_{3.2}^*, c_{1.1}^*]$, from Propositions \ref{prop:nec} and \ref{prop:2}. Then, fix $c\in [c_1,c_2]$ and let $z_c$ be the corresponding solution of \eqref{e:problem0}. By using properly Lemma \ref{lem:0101} $(b)$, we have $\bar{z}_{c_2}\le \bar{z}_c \le \bar{z}_{c_1}$ in $(\bar\alpha,1)$ and $\tilde{z}_{c_1} \le \tilde{z}_c \le \tilde{z}_{c_2}$ in $(0,\bar\alpha)$. Hence,
\begin{equation}
\label{zchain}
z_{c_2} \le z_c \le z_{c_1} \ \mbox{ in } \ [0,1].
\end{equation}
By following the proof of Proposition \ref{prop:nec}, we define the functions $\phi_{\alpha,0}$ and $\phi_{1,\alpha}$, from \eqref{e:cauchyEQ}-\eqref{e:cauchyCI}. We can glue $\phi_{\alpha,0}$ and $\phi_{1,\alpha}$ to obtain a profile $\phi$ if and only if \eqref{claim} holds true. However, since $c_1$ and $c_2$ are admissible, then $z_{c_1}$ and $z_{c_2}$ make \eqref{e:xialpha finite}, \eqref{e:xialpha1 finite} finite; by \eqref{zchain}, a direct argument involving also \eqref{e:xialpha finite}, \eqref{e:xialpha1 finite} implies that also $z_c$ makes \eqref{claim} true.
}
\end{remark}

\begin{proof}[Proof of Theorem \ref{th:multiplicity}]
We prove $(a)$. Since $\mathcal J$ is not empty then there exists (at least) a wave profile $\phi$. Corollary \ref{cor:equivalence} $(a)$ implies that this fact is equivalent to the existence of a solution of \eqref{e:problem0}. Proposition \ref{prop:1} informs us that \eqref{e:problem0} is solvable for (at most) a unique $c$ and that the corresponding solution $z$ is unique. Moreover, since $z(\gamma)<0$ and $D(\gamma)>0$, from Lemma \ref{lem:def} and formulas as \eqref{e:zeta1} we have that $\phi\rq{}(\xi)<0$ for every $\xi$ such that $\phi(\xi)=\gamma$. This implies that stretchings at level $\gamma$ as in \eqref{stretch} are not allowed.

We prove $(b)$. By arguing as in case $(a)$, but now from Propositions \ref{prop:nec} $(i)$ and \ref{prop:2}, we obtain that (a non-empty) $\mathcal J$ must be bounded; Remark \ref{rem:interval} implies that $\mathcal{J}$ is an interval. Fix $c \in \mathcal{J}$. By Lemma \ref{lem:def} and $D(\alpha=\gamma)=0$, profiles as \eqref{stretch} are solutions to \eqref{e:ODE}, for every finite $\delta_1,\delta_2$.  Thus, for each $c\in \mathcal J$, the profile must be unique at least up to shifts and stretchings at level $\gamma$; we now prove that no other loss of uniqueness occurs. Suppose by contradiction that, for a given $c$, there exist profiles $\phi_1$ and $\phi_2$ such that $\phi_1 \neq \phi_2$ in $(-\infty, \xi^*)$, $\phi_1(\xi^*)=\phi_2(\xi^*)=\gamma$ and $\phi_1, \phi_2 \in (\gamma,1]$ in $(-\infty,\xi^*)$, for some $\xi^*\in\R$. The one-to-one correspondence between profiles connecting $1$ to $\gamma$ and solutions of \eqref{e:problem0} in $(\gamma,1)$ is standard because $D,g\neq 0$ in $(\gamma,1)$. Since, for a fixed $c$, \eqref{e:problem0} in $(\gamma,1)$ has at most a unique solution $z$ (essentially from Lemma \ref{lem:0101} $(a)$), then $\phi_1=\phi_2$ in $(-\infty, \xi^*)$, a contradiction.  The closedness of $\mathcal J$ under \eqref{integrability g} follows by Corollary \ref{cor:equivalence} $(b)$ and Proposition \ref{prop:2}, directly.

We prove $(c)$. The first part follows analogously to cases $(a)$ and $(b)$, by applying Proposition \ref{prop:nec} $(ii)$ and Remark \ref{rem:problem0prime}. Indeed, if $\phi$ is a profile then $z$ given in the proof of Proposition \ref{prop:nec} (when $\alpha<\gamma$) satisfies \eqref{e:problem0prime} with $z(\gamma)\ge 0$. Remark
%\ref{rem:(c)boundedinterval} and
\ref{rem:problem0prime} implies that $\mathcal J\subseteq [c_{3,2}^*, \min\{c_{1,1}^*, c_{3,2}^*\}]$, hence it is bounded. By Remark \ref{rem:(c)boundedinterval} and Proposition \ref{prop:3} we get that $\mathcal J \supset (c_{3,2}^*, \min\{c_{1,1}^*, c_{3,2}^*\})$, so it is an interval.  However, we cannot conclude that $\mathcal J$ is also closed because the extremal cases $c=c_{3.1}^*$ and $c=c_{3.2}^*$  are not comprehensively covered, by Proposition \ref{prop:3}. Again by Proposition \ref{prop:3}, for every  $c$ satisfying \eqref{e:range c}, we have the existence of a family of solutions of \eqref{e:problem0} in one-to-one correspondence with $\mathcal Z_c$ in \eqref{e:zc2}. Then, by Proposition \ref{prop:suff} $(a)$, to each of such solutions corresponds a profile (unique up to shifts).  Hence, the existence of the
 family $\{\phi_{\lambda}\}_\lambda$, for  $\lambda\in[\lambda_c,0)$ is given, starting from $z_s$ of \eqref{e:zc2} (with $\lambda=s/D(\gamma)<0$) and $\lambda_c=\beta(c)/D(\gamma)$, where $\beta(c)$ is defined in Lemma \ref{lem:c6final}. The latter part of item ({\em c}) deals with the profiles $\phi_0$. These profiles (up to shifts and stretchings at level $\gamma$), by Corollary \ref{cor:equivalence} $(c)$, must be in one-to-one correspondence with solutions $z$ of \eqref{e:problem0prime} with $z(\gamma)=0$. Hence, from Remark \ref{rem:problem0prime}, we deduce that $\mathcal J=[c_{3,2}^*, \min\{c_{1,1}^*, c_{3,2}^*\}]$.

 The last part, regarding $\phi\rq{}$, is deduced by Proposition \ref{prop:nec}.
\end{proof}

\begin{remark}
\label{nec bounds}
{
\rm
 In the case $\alpha\le \gamma$, a direct consequence of Propositions \ref{prop:nec}, \ref{prop:2} and Remark \ref{rem:problem0prime} is that, if profiles exist, then $c_{3.2}^* \le c \le c_{1.1}^*$. Due to \eqref{e:soglie1} and by applying \eqref{e:c*below} to $c_{1.1}^*$ and $c_{3.2}^*$, then it follows $h(1) \le c \le h(\alpha)$, necessarily.
}
\end{remark}

\begin{proof}[Proof of Theorem \ref{thm:f convex}]
First, we claim that, if $f$ is convex and there exists a wave profile, then $\alpha>\gamma$. We argue by contradiction assuming both that a profile exists and $\alpha\le \gamma$. As observed in Remark \ref{nec bounds}, we have $ c\ge h(1)$. Moreover, to $c_{1.1}^*$ we apply \eqref{e:c*below2}, because $f$ is convex. Hence, we must have $c_{1.1}^*< h(\alpha)$, which in turn implies $h(\alpha)> h(1)$. This contradicts the convexity assumption on $f$. Hence, we have showed the claim.

Assume then $\alpha>\gamma$. From Proposition \ref{prop:nec} $(i)$ there exists a solution of \eqref{e:problem0} and from Proposition \ref{prop:1} $c$ must equal $c_1^*$. Multiplying by $z(\phi)$ and integrating $\eqref{e:problem0}_1$ in $(\alpha,1)$ gives
\[ 0=\int_{\alpha}^{1} (h(\sigma)-c_1^*)z(\sigma)\,d\sigma -\int_{\alpha}^1 D(\sigma)g(\sigma)\,d\sigma,\]
that is
\begin{equation}\label{integral eq} c_1^* \int_{\alpha}^{1}z(\sigma)\,d\sigma=\int_{\alpha}^1 h(\sigma) z(\sigma)\,d\sigma - \int_{\alpha}^1 D(\sigma)g(\sigma)\,d\sigma.\end{equation}
This implies, because of $Dg <0$ and $z>0$ in $(\alpha,1)$,
\[c_1^* > \frac{\int_{\alpha}^{1}h(\sigma)z(\sigma)\,d\sigma}{\int_{\alpha}^1 z(\sigma)\,d\sigma} \ge \min_{[\alpha,1]}h.\]
Since $f$ is convex then
\begin{equation}
\label{e:simple estimate}c_1^* > h(\alpha).
\end{equation}
We prove now \eqref{e:fconv1}. By arguing similarly but in $(0,\alpha)$ we obtain that $c_1^*<h(\alpha)$ if $\int_{0}^{\alpha}Dg <0$ and $c_1^* \le h(\alpha)$ if $\int_{0}^{\alpha}Dg =0$. Thus, we must have \eqref{e:fconv1}.

Now, we prove that \eqref{fconvex suff cond} implies the existence of solutions. First, note that \eqref{fconvex suff cond} implies necessarily $\alpha > \gamma$. By applying Proposition \ref{prop:suff} $(a.i)$ and Proposition \ref{prop:1}, solutions exist if both $c_{1.1}^* < c_{1.2}^*$ and  $c_1^* \ge c_{3.2}^*$. We prove first the former one. Since $f$ is convex, \eqref{e:c*below2} applied to $c_{1.1}^*$ and $c_{1.2}^*$ defined in \eqref{e:soglie1} implies that
\[ c_{1.1}^* <h(\gamma) < c_{1.2}^*.\]
It remains to prove that $c_1^* \ge c_{3.2}^*$. By applying \eqref{e:c*above} to $c_{3.2}^*$ in \eqref{e:soglie1} we have $c_{3.2}^* \le \bar{c}:=h(1) + 2\sqrt{\sup_{[\alpha,1)}\delta(Dg,1)}$. Hence, in order to conclude it suffices to show that $c_1^* \ge \bar c$.

Let $z_1$ be the solution of $\eqref{e:problem2}_1$ corresponding to $c=\bar c$. Note, $z_1$ is well-defined since $\bar c >h(1)\ge h(\gamma) > c_{1.1}^*$ (see Lemma \ref{lem:problem1}). Since $z_1(\gamma)<0$, $z_1$ can be maximally extended up to some $\phi_0 \in (\gamma, \alpha]$ as a unique solution of the differential equation in $\eqref{e:problem2}_1$. Still call $z_1$ this extension. Observe that such a $z_1$ turns out to be continuous up to $[0,\phi_0]$ (see \cite[Lemma 3.1]{BCM1}). It is plain to verify that $\phi_0 < \alpha$ if and only if $c_1^* > \bar c$ (see Figure \ref{f:fz1z2}). We then prove that if \eqref{fconvex suff cond} holds then $\phi_0<\alpha$. To this end, since $z_1(\phi_0)\in (-\infty,0]$ we observe that
\begin{equation}\label{1} 0\le z_1(\phi_0)^2 - z_1(0)^2 =2\int_{0}^{\phi_0}\left(h(\rho)-\sigma\right) z_1(\rho)\,d\rho -2\int_0^{\phi_0} q(\rho)\,d\rho.\end{equation}
Define
\[ \Delta:=\left(\bar c-h(0)\right)\sqrt{\alpha}+\sqrt{\left(\bar c-h(0)\right)^2\alpha +2M}=\Sigma\sqrt{\alpha}+\sqrt{\Sigma^2\alpha +2M}.\]
Since $f$ is convex, by arguing similarly to the proof of \cite[Theorem 3.2]{Maini-Malaguti-Marcelli-Matucci2007}, then $\phi \mapsto -\Delta \phi^{1/2}$ turns out to be a strict lower-solution of $\eqref{e:problem0101}_1$ in $(0,\alpha)$,  with $c=\bar c$. Hence, $z_1(\phi)>-\Delta \phi^{1/2}$ in $(0,\phi_0)$. This, the convexity of $f$, and the very definition of $\bar c$ imply that
\[\int_{0}^{\phi_0}\left(h(\rho)-\bar c\right)z_1(\rho)\,d\rho < \left(\bar c-h(0)\right)\Delta\int_{0}^{\phi_0}\sqrt{\rho}\,d\rho<\Sigma \Delta \int_{0}^{\alpha}\sqrt{\rho}\,d\rho.
\]
From \eqref{1} then we obtain
\[0 < \frac{4}{3}\Sigma\Delta \alpha^{3/2} -2\int_{0}^{\phi_0} q(\rho)\,d\rho.\]
Since $q>0$ in $(\gamma, \alpha)$, if \eqref{fconvex suff cond} holds then $\phi_0 < \alpha$; hence, $c_1^* >\bar c$. This concludes the proof.
\end{proof}

\begin{proof}[Proof of Theorem \ref{th:sufficient conditions}]
We first observe that, since $f$ is assumed to be (strictly) concave then
\[
\inf_{[0,\gamma)}\delta(f, \gamma)=h(\gamma)=\dot f (\gamma), \ \mbox{ and } \ \sup_{[\alpha,1)}\delta(f,1)=\frac{f(1)-f(\alpha)}{1-\alpha}.
\]

Assume $\alpha>\gamma$. In virtue of \eqref{e:soglie1}, from \eqref{e:c*above} applied to $c_{1.1}^*$ and $c_{3.2}^*$ we have
\[ c_{1.1}^* \ge \inf_{[0,\gamma)}\delta(f,\gamma) - 2 \sqrt{\sup_{[0,\gamma)}\delta(Dg, \gamma)} \ge \sup_{[\alpha,1)}\delta(f,1) +2\sqrt{\sup_{[\alpha,1)}\delta(Dg,1)} \ge c_{3.2}^*.\]
where the second inequality is given by \eqref{sufficient condition1}. Moreover, since $\dot g(\gamma)>0$ then $h(\gamma)-2\sqrt{D(\gamma)\dot g(\gamma)}<h(\gamma)+2\sqrt{D(\gamma)\dot g(\gamma)}$ and hence, in virtue of \eqref{e:soglie1}, from \eqref{e:c*below} applied to $c_{1.1}^*$ and $c_{1.2}^*$ we have $c_{1.1}^* < c_{1.2}^*$. Hence, from Proposition \ref{prop:1} we infer that \eqref{e:problem0} is solvable for some $c$. Proposition \ref{prop:suff}  $(a.i)$ then implies that, associated to that $c$, there exists a profile $\phi$.

Assume $\alpha=\gamma$. As in the case $\alpha>\gamma$, but now we apply \eqref{e:c*above2} to $c_{1.1}^*$ and deduce $c_{1.1}^* > c_{3.2}^*$. Indeed, estimate \eqref{e:c*above2} informs us that
\[
c_{1.1}^* \ge  \inf_{[0,\gamma)}\delta(f,\gamma) - 2 \sqrt{\sup_{\phi \in [0,\gamma)} \frac1{\gamma-\phi}\int_{\phi}^{\gamma}\frac{D(\sigma)g(\sigma)}{\sigma-\gamma}\,d\sigma} > \inf_{[0,\gamma)}\delta(f,\gamma) - 2 \sqrt{\sup_{[0,\gamma)}\delta(Dg, \gamma)},
\]
because $\delta(Dg,\gamma)(\sigma)=D(\sigma)g(\sigma)/(\sigma-\gamma)$ in $[0,\gamma)$ is a positive non-constant function which vanishes at $\gamma$ and hence its supremum is strictly larger than that of its mean value function. Proposition \ref{prop:2} implies that \eqref{e:problem0} is solvable (for a non-empty interval of values for $c$ which must satisfy $c\le h(\alpha)$, from \eqref{e:c*below} applied to $c_{1.1}^*$). Since there exists at least a value of such $c$ satisfying $c<h(\alpha)$ (indeed there exists an interval) then, under \eqref{integrability g}, Proposition \ref{prop:suff} $(a.ii)$ gives the existence of profiles.

Assume $\alpha<\gamma$. First, we observe that since $f$ is strictly concave then
\[
 \dot f(\gamma)<\frac{f(\gamma)-f(\alpha)}{\gamma-\alpha}= \inf_{[0,\gamma]\setminus\{\alpha\}}\delta(f,\alpha).
\]
Condition \eqref{sufficient condition2} then implies
\[ \inf_{[0,\gamma]\setminus\{\alpha\}}\delta(f,\alpha)-2 \sqrt{\sup_{[0,\gamma]\setminus\{\alpha\}}\delta\left(Dg, \alpha\right)}>  \sup_{[\gamma,1)}\delta(f,1) + 2 \sqrt{\sup_{[\gamma,1)}\delta\left(Dg,1\right)}.\]
From \eqref{e:soglie1} and \eqref{e:c*above} for $c_{3.2}^*$ we deduce $c_{3.2}^* \le  \sup_{[\gamma,1)}\delta(f,1) + 2 \sqrt{\sup_{[\gamma,1)}\delta\left(Dg,1\right)}$ while  for \eqref{e:c*above} applied to $c_{3.1}^*$ and $c_{1.1}^*$ we obtain
\begin{multline*}
\inf_{[0,\gamma]\setminus\{\alpha\}}\delta(f,\alpha)-2 \sqrt{\sup_{[0,\gamma]\setminus\{\alpha\}}\delta\left(Dg, \alpha\right)} \le \\ \min\{ \inf_{[0,\alpha)}\delta(f,\alpha) -2 \sqrt{\sup_{[0,\alpha)}\delta(Dg,\alpha)},  \inf_{(\alpha, \gamma]}\delta(f,\alpha) -2 \sqrt{\sup_{(\alpha,\gamma])}\delta(Dg,\alpha)}\} \le \min\{c_{1.1}^*, c_{3.1}^*\}.
\end{multline*}
Hence, we deduce that $c_{3.2}^*< \min\{c_{1.1}^*, c_{3.1}^*\}$. Proposition \ref{prop:3} informs us that \eqref{e:problem0} is solvable. Proposition \ref{prop:suff} $(a.i)$ implies in turn that there exist profiles $\phi$.
\end{proof}

\begin{remark}
{
\rm
In the case $\alpha=\gamma$, we proved in Theorem \ref{th:sufficient conditions} that $c_{3.2}^*<c_{1.1}^*$. Hence, in virtue of Corollary \ref{cor:equivalence} and Proposition \ref{prop:2}, the profiles, whose existence is assured by Theorem \ref{th:sufficient conditions}, correspond to a whole interval of speeds, which does not collapse to a single value.

In the case $\alpha<\gamma$, Theorem \ref{th:sufficient conditions} shows {\em en passant} the existence of profiles $\phi_\lambda$ (under the notation of Theorem \ref{th:multiplicity}) for $\lambda \in [\lambda(c), 0)$, with $c$ in the whole interval given by \eqref{e:range c} (which in particular excludes the extremal case $c_{3.2}^*$). For these speeds, a consequence of Proposition \ref{prop:suff} $(b)$ is that, in order to admit also the profile $\phi_0$, we would need to add the assumption \eqref{integrability g}. Moreover, in such a case, also the speed $c=c_{3.2}^*$ is admissible for $\phi_0$ (see Remark \ref{rem:problem0prime}).
}
\end{remark}

%%%%%%%%%%%%%%%%%%%%%%%%%%%%
\section{Examples}
\label{s:examples}
\setcounter{equation}{0}
In this section we provide some examples concerning specific situations which occur in the above results.

\begin{example}[$\mathcal J$ degenerating to a single point]
\label{ex:Jeqzero}
{
\rm
We show that, when $\alpha\le \gamma$, the interval $\mathcal J$ of Theorem \ref{th:multiplicity} may reduce to a single value.

Assume first $\alpha=\gamma=\frac1{2}$. Let $D$ and $g$ be defined by
\[
D(\phi):= \left( \frac1{2}-\phi\right)\phi \ \mbox{ and } \ g(\phi):=-\phi^2\left(\frac1{2}-\phi\right)^2 \ \mbox{ for } \ \phi \in \left[0,\frac1{2}\right]
\]
and by symmetry as $D(\phi)=-D(\phi-1/2)$ and $g(\phi)=-g(\phi-1/2)$, if $\phi \in (1/2,1]$.
%\[
%D(\phi):=\begin{cases} \left( \frac1{2}-\phi\right)\phi, \ &\phi \in \left[0,\frac1{2}\right], \\[10pt]  \left(1-\phi\right)\left(\frac1{2}-\phi\right), \ &\phi \in \left(\frac{1}{2},1\right] \end{cases}\ \mbox{ and } \ g(\phi):=\begin{cases}-\phi^2\left(\frac1{2}-\phi\right)^2, \ &\phi \in \left[0,\frac1{2}\right],\\[10pt]
%\left(1-\phi\right)^2\left(\phi-\frac1{2}\right)^2, \ &\phi \in\left(\frac1{2},1\right],\end{cases}
%\]
Such functions $D$ and $g$ satisfy (D) and (g) with $\alpha=\gamma=1/2$ and are such that $q=Dg$ satisfies
\[
q(\phi)=q\left(\phi-\frac1{2}\right) \ \mbox{ for } \ \phi \in \left[\frac1{2},1\right].
\]
Also, for some $\tau, \sigma \in (0,1)$, define, for $\phi \in [0,1/2]$,
\[
\ h(\phi):=\phi^\tau\left(\frac1{2}-\phi\right)^\sigma\left\{(1+\sigma)\phi+\phi^{2-2\tau}\left(\frac1{2}-\phi\right)^{2-2\sigma}-(1+\tau)\left(\frac1{2}-\phi\right) \right\},
\]
and such that $h(\phi)=-h(\phi-1/2)$ for each $\phi \in (1/2,1]$. Such a function $h$ is continuous since $h({1/2}^-)=h({1/2}^+)=0$.

In this case, from the symmetry properties of $q$ and $h$ and \eqref{e:soglie1}, it is direct to verify that
\[ c_{3.2}^* = -c_{1.1}^*.\]
We have that the function $z$ defined by
\[
z(\phi):=\begin{cases} -\phi^{1+\tau}\left(\frac1{2}-\phi\right)^{1+\sigma}, \ &\phi \in \left[0,\frac1{2}\right],\\[6pt] \left(1-\phi\right)^{1+\sigma}\left(\phi-\frac1{2}\right)^{1+\tau}, \ &\phi \in \left(\frac1{2},1\right],\end{cases}
\]
satisfies \eqref{e:problem0} with  $c=0$. Notice that $z(\phi) = -z(\phi-1/2)$ if $\phi\in[0,1/2]$. Hence, from Proposition \ref{prop:2}, $c_{3.2}^* \le 0$. Moreover, since from \eqref{e:c*below} we have $c_{3.2}^* \ge h(1)=0$ then necessarily $c_{3.2}^*=0$. This gives $c_{3.2}^*=c_{1.1}^*=0$. From Proposition \ref{prop:nec} $(i)$ we have then
\[ \mbox{either } \mathcal J=\varnothing \ \mbox{ or } \mathcal J=\{0\}.\]
Observe, since \eqref{integrability g} does not hold, in this example, we cannot directly apply Proposition \ref{prop:suff} $(a.ii)$, but looking for profiles associated to the speed $c=0$ we proceed as follows. We need to solve (separately) the two Cauchy problems
\begin{equation*}
\begin{array}{lll}
\phi\rq{}&=\ds\frac{z(\phi)}{D(\phi)}=-\phi^\tau\left(\frac1{2}-\phi\right)^\sigma, &\phi(0)=\phi_0,
\\
\psi\rq{}&=\ds\frac{z(\psi)}{D(\psi)}= -{(1-\psi)^{\sigma}}\left(\psi-\frac1{2}\right)^\tau, &\psi(0)=\psi_0,
\end{array}
\end{equation*}
for arbitrary $0 < \phi_0 < 1/2$ and $1/2< \psi_0<1$. Focus on the problem for $\phi$. From
 \begin{equation}\label{e:cauchyintegral1}\int_{\phi_0}^{\phi}\frac{ds}{-s^\tau\left(\frac1{2}-s\right)^\sigma}=\xi,\end{equation}
we get a profile which connects $1/2$ to $0$ such that $\phi$ reaches $1/2$ at a finite $\xi_*$ given by:
 \[ \xi_*= \int_{\phi_0}^{1/2}\frac{ds}{-s^\tau\left(\frac1{2}-s\right)^\sigma} \in \R.\]
 Analogously, from
 \begin{equation}\label{e:cauchyintegral2}
 \int_{\psi_0}^{\psi}\frac{-ds}{(1-s)^{\sigma}\left(s-\frac1{2}\right)^\tau}=\xi,
 \end{equation}
 we obtain a profile $\psi$ associated to $c=0$ which connects $1$ to $1/2$ and such that it reaches $1/2$ at a finite value. Gluing together $\phi$ and $\psi$ gives a desired wavefront and hence
 \[ \mathcal J=\{0\}.\]
Note, the profile is unique up to space shifts and stretchings at level $1/2$.

\smallskip
Assume now $\alpha=1/3<\gamma=2/3$. Define
\[
D(\phi)= \begin{cases} \left(\phi-\frac1{3}\right)^4, \ &\phi \in \left[0,\frac1{3}\right]\!,\\[4pt]-\left(\phi-\frac1{3}\right)^4, \ &\phi \in \left(\frac1{3},\frac2{3}\right]\!,\\[4pt] \frac4{27}\left(\frac2{3}-\phi\right)-\frac1{81},  &\phi \in \left(\frac2{3},1\right]\!,\end{cases}
\
g(\phi)= \begin{cases}- \beta\phi^{1-\tau}, \ &\phi \in \left[0,\frac1{3}\right]\!,\\[5pt] -\beta\left(\frac2{3}-\phi\right)^{1-\tau}, \ &\phi \in \left(\frac1{3},\frac2{3}\right]\!,\\[5pt] \beta\left(\phi-\frac2{3}\right)^{1-\tau}\frac{\left(\phi-1\right)^4}{\frac4{27}\left(\phi-\frac2{3}\right)+\frac1{81}},  &\phi \in \left(\frac2{3},1\right]\!,\end{cases}\]
 for some $\tau \in (0,1)$ and $\beta=1-\tau/2 \in (0,1)$. Such functions $D$ and $g$ satisfy (D) with $\alpha=1/3$ and (g) with $\gamma=2/3$. Note, $g$ satisfies also \eqref{integrability g}. Despite the lack of symmetry in the definition of $D$ and $g$, the function $q=Dg$ satisfies
\[q(\phi)=-q\left(\frac2{3}-\phi\right) \ \mbox{ for } \ \phi \in \left[\frac1{3},\frac2{3}\right] \ \mbox{ and } \ q(\phi)=q\left(\phi-\frac2{3}\right) \ \mbox{ for } \  \phi \in \left[\frac2{3},1\right].\]
Also, define $h(\phi)=2\phi^\beta \left(1/3-\phi\right)$, for $\phi \in [0,1/3]$. In the other two sub-intervals, we define $h$ by symmetry by $h(\phi)=-h(2/3-\phi)$, for $\phi \in (1/3,2/3]$ and $h(\phi)=-h(\phi-2/3)$, for $\phi \in (2/3,1]$. This makes $h$ continuous. Hence, from the symmetry properties of $q$ and $h$ and from \eqref{e:soglie1}, it holds
\begin{equation}\label{e:exampleee} c_{1.1}^*=c_{3.1}^*=-c_{3.2}^*.\end{equation}
Define the function $z(\phi)$ as
\begin{equation}
\label{e:zz}z(\phi)=\begin{cases}-\phi^\beta \left(\frac1{3}-\phi\right)^2, \ &\phi \in \left[0,\frac1{3}\right],\\[8pt]
\left(\frac2{3}-\phi\right)^\beta\left(\phi-\frac1{3}\right)^2, \ &\phi \in \left(\frac1{3}, \frac{2}{3}\right],\\[8pt]
\left(\phi-\frac{2}{3}\right)^\beta\left(1-\phi\right)^2, \ &\phi \in \left(\frac2{3},1\right],
\end{cases}
\end{equation}
so that $z(\phi)=-z(2/3-\phi)$, for $\phi \in (1/3,2/3]$, and $z(\phi)=-z(\phi-2/3)$, for $\phi \in (2/3,1]$. Direct computations show that $z$ is a solution of \eqref{e:problem0prime}, with $z(\gamma)=0$ and with $c=0$. We highlight that $z$ is of class $C^1$ neither in a neighborhood of $2/3=\gamma$ nor in a right neighborhood of $0$.  From Remark \ref{rem:problem0prime} and \eqref{e:exampleee} we deduce $c_{3.2}^*\le 0 \le -c_{3.2}^*$. From \eqref{e:c*below} applied to $c_{3.2}^*$ we know that $c_{3.2}^* \ge h(1)=0$. We hence obtain
\[ c_{1.1}^*=c_{3.1}^*=c_{3.2}^*=0.\]
By direct computation as in \eqref{e:cauchyintegral1}-\eqref{e:cauchyintegral2}, or by Proposition \ref{prop:suff} $(b)$, we conclude that there exists a profile $\phi$ ($\phi_0$ in the notation of Theorem \ref{th:multiplicity}), which is unique up to shifts and stretchings at level $1/3$.
}
\end{example}

\begin{example}[Solutions with plateaus when $\alpha<\gamma$]
\label{other}
{
\rm
In the case $\alpha<\gamma$, we show that equation \eqref{e:E} can admit wavefronts, connecting $1$ to $0$, such that $\phi\rq{}(\xi_\gamma)=0$ for every $\xi_\gamma$ (possibly not unique) with $\phi(\xi_\gamma)=\gamma$. Hence, $\phi=\phi_0$ under the notation of Theorem \ref{th:multiplicity} $(c)$. This can occur only if \eqref{g sublinear gamma} does not hold, that is when either $D^+g(\gamma)=\infty$ or $D^-g(\gamma)=\infty$ ; under \eqref{g sublinear gamma} we have $\phi\rq{}(\xi_\gamma)<0$. We refer to \cite[Example 1]{MMM2004} for a similar example in the case $D=1$.
\par
For simplicity, we set $\alpha=0$. Define $D$ and $g$ in $[0,1]$,
\[ D(\phi):=-\phi^2 \ \mbox{ and } \ g(\phi):=\frac{\sigma+1}{2}\left(\phi-\gamma\right)\left|\phi-\gamma\right|^{\sigma-1}\left(1-\phi\right)^2,\]
for some $\sigma \in (0,1)$ and $\gamma\in (0,1)$. Notice that $g$ is not Lipschitz-continuous at point $\gamma$. We want to define a strict (at $0$) semi-wavefront from $1$ to $0$.  We have $D<0$ in $(0,1]$ and $D(0)=0$ while $g<0$ in $[0,\gamma)$, $g>0$ in $(\gamma,1)$ and $g(1)=0$. Direct computations show that the function $z(\phi):=\phi\left|\phi-\gamma\right|^\frac{\sigma+1}{2}(1-\phi)$ satisfies, for $\phi \in (0,\gamma)\cup(\gamma,1)$,
\begin{equation}
\label{c0}
\dot{z}(\phi)=h(\phi)-\frac{D(\phi)g(\phi)}{z(\phi)},
\end{equation}
 with $h(\phi):=\left(1-2\phi\right)\left|\phi-\gamma\right|^\frac{\sigma+1}{2}$. We have $z>0$ in $(0,\gamma)\cup(\gamma,1)$ and $z(0)=z(\gamma)=z(1)=0$. Moreover, $\dot{z}(\gamma^-)=-\infty$ and $\dot{z}(\gamma^+)=\infty$. Also, we have
\[ \int^{\gamma} \frac{D(s)}{z(s)}\,ds>-\infty \ \mbox{ and } \ \int_{\gamma} \frac{D(s)}{z(s)}>-\infty.\]
\par
Let $\phi_{\gamma,0}$ be the solution of
\begin{equation}
\label{cauchy1111}
\phi\rq{}=\frac{z(\phi)}{D(\phi)}=\frac{\left(\phi-1\right)\left|\gamma-\phi\right|^\frac{\sigma+1}{2}}{\phi}, \ \phi(0)=\phi_0\in (0,\gamma).
\end{equation}
Such a solution exists and is unique since $G\rq{}<0$ in $(0,\gamma)$, where
\[
G(t):=
\int_{\phi_0}^{t} \frac{s}{(s-1)|\gamma-s|^\frac{\sigma+1}{2}}\,ds,
\]
so that $\phi_{\gamma,0}(\xi)=G^{-1}(\xi)$ and $\phi_{\gamma,0} \in (0,\gamma)$. Analogously, because $G\rq{}<0$ also in $(\gamma,1)$, it results (uniquely) defined $\phi_{1,\gamma}$ as the solution of \eqref{cauchy1111} with $\phi_0 \in (\gamma,1)$. The function $\phi_{1,\gamma}$ is such that  $\phi_{1,\gamma} \in (\gamma,1)$ and $\phi_{1,\gamma}(\xi)=G^{-1}(\xi)$. Define
\[
\xi_1:=G(1^-), \
 \xi_\gamma^1:=G(\gamma^+), \
\xi_\gamma^2:=G(\gamma^-) \ \mbox{ and } \ \xi_0=G(0^+),
\]
such that $\phi_{1,\gamma}$ is maximally defined in $(\xi_1,\xi_\gamma^1)$ and $\phi_{\gamma,0}$ is maximally defined in $(\xi_\gamma^2, \xi_0)$. It is easy to check that
\[
\xi_1=-\infty, \ \xi_\gamma^1, \xi_\gamma^2, \xi_0\in\R,
\]
and that
\[
\phi_{1,\gamma}\rq{}({\xi_\gamma^1}^-)=\phi_{\gamma,0}\rq{}({\xi_\gamma^2}^+)=0, \ \phi_{\gamma,0}\rq{}({\xi_0}^-)=-\infty.
\]
Thus, it is possible to glue together $\phi_{1,\gamma}$ and $\phi_{\gamma,0}$ to obtain $\phi$, the profile of a strict (at $0$) semi-wavefront which connects $1$ to $0$.
\par
For simplicity, we set $\alpha=1$. We consider then the functions
\[ D(\phi):=\left(\phi-1\right)^2 \ \mbox{ and } \ g(\phi):=-\tau\phi,\]
for some $\tau>0$. We want now to define a profile of a strict (at $1$) wavefront from $1$ to $0$. We have that the function $z(\phi):=\lambda\phi\left(\phi-1\right)$,
where $\lambda>0$, satisfies \eqref{c0} in $(0,1)$, with $h(\phi):=\lambda\left(2\phi-1\right)-\frac{\tau(\phi-1)}{\lambda}$, such that $h(1)=\lambda$. In this case, by solving
\[ \phi\rq{}=\frac{z(\phi)}{D(\phi)}=\frac{\lambda \phi}{\phi-1}, \ \phi(0)=\frac1{2},\]
we obtain that
\[
\frac1{\lambda}\int_{\frac1{2}}^{\phi} \frac{s-1}{s}\,ds=\xi.
\]
This gives a function $\psi$, defined in its maximal-existence interval $(\xi_1, \xi_0)$, which turns out to be the profile of a strict (at $1$) semi-wavefront, which connects $1$ to $0$. Also, $\psi$ is such that
 $\psi\rq{}(\xi_1)=-\infty$. Note, $\xi_1 \in \R$ because $\xi_1=\frac1{\lambda}\int_{1/2}^{1}\frac{s-1}{s}ds \in \R$.
\par
Finally, pasting together manipulations of $\phi$ and $\psi$ we have provided the desired example. Hence, plateaus are possible.
}
\end{example}

\begin{example}[Non-regular $g$ and no plateaus]
\label{other2}
{
\rm
Again for $\alpha<\gamma$, we continue the discussion of Example \ref{other} by showing that $D^+ g(\gamma)=\infty$ (or $D^-g(\gamma)=\infty$) does not necessarily imply $\phi\rq{}(\xi_\gamma)=0$, as Example \ref{other} could suggest. More precisely, under the above condition, we exhibit a profile connecting $1$ to $0$ such that  $\phi\rq{}(\xi_\gamma)<0$ in the unique $\xi_\gamma$ such that $\phi(\xi_\gamma)=\gamma$. Hence, in this case, plateaus are not possible. Differently from the previous example, a slightly different choice of $g$ and $D$ (and $h$) yields a solution $z$ that does not vanish at $\gamma$.

 We set $\alpha=0$. We want to define a strict (at $0$) semi-wavefront from $1$ to $0$. Define $D$ and $g$ in $[0,1]$ by
\begin{equation}
\label{D-g}
 D(\phi)=-\phi \ \mbox{ and } \ g(\phi):=\left|\phi-\gamma\right|^{\sigma-1}\left(\phi-\gamma\right)\left(1-\phi\right),
 \end{equation}
where $\sigma \in (0,1)$.  We have $D<0$ in $(0,1]$ and $D(0)=0$, while $g<0$ in $[0,\gamma)$, $g>0$ in $(\gamma,1)$ and $g(1)=0$. Then the function $z(\phi):=\phi\left(1-\phi\right)$
satisfies \eqref{c0} with $h(\phi):=1-2\phi-\left|\phi-\gamma\right|^{\sigma-1}\left(\phi-\gamma\right)$. Clearly, $z>0$ in $(0,1)$ and $z(0)=z(1)=0$. The corresponding profile such that $\phi(0)=1/2$ is given explicitly by
\[
\phi(\xi)=1-\frac{e^\xi}{2}, \ \xi \in \left(-\infty, \log(2)\right).
\]
By arguing similarly to the second part of Example \ref{other}, we conclude the example.
}
\end{example}

%%%%%%%%%%%%%%%%%%%%%%%%%%%%%%%%%%%%%%

\appendix
\section{Appendix}
\label{s:appendix}
\setcounter{equation}{0}

 In this appendix we compute the derivative of a profile $\phi$ at points where it reaches $\alpha$. For $\alpha\neq \gamma$, we denote $\xi_\alpha \in \R$ the unique value such that $\phi(\xi_\alpha)=\alpha$, while for $\alpha=\gamma$ we define $\xi_\alpha^1:=\sup\{\xi: \phi(\xi)>\alpha\}\le\xi_\alpha^2:=\inf\{\xi: \phi(\xi)<\alpha\}$.
For $c\in \R$, we define $s_\pm(c)$ by
\[
2 s_\pm(c):={h(\alpha)\pm c-\sqrt{\left(h(\alpha)-c\right)^2-4 \dot{D}(\alpha)g(\alpha)}}.
\]
Recall that $\dot{D}(\alpha)\le0$. We show below that $\phi$ has negative (right or left) derivative at $\xi_\alpha$, which can be infinite only if $\dot{D}(\alpha)=0$. To stress the latter behavior, for $Q\in\R$ we denote
\[
[Q]_{-\infty}:= \left\{
\begin{array}{ll}
Q & \hbox{ if }\dot{D}(\alpha)<0,
\\
-\infty & \hbox{ if }\dot{D}(\alpha)=0.
\end{array}
\right.
\]
\begin{proposition}
\label{prop:reg alpha2}
Let $\phi$ be a profile satisfying \eqref{e:infty} with speed $c$.
We have:
\begin{enumerate}[(a)]
\item if $\alpha>\gamma$ then
\begin{equation}
\label{e:reg alpha 1}
\phi\rq{}({\xi_\alpha}^\pm)=
\left\{
\begin{array}{ll}
\frac{g(\alpha)}{s_-(c)} \ &\mbox{ if } \ \dot{D}(\alpha)<0 \ \mbox{ or } \ c>h(\alpha),
\\
\ds-\infty \ &\mbox{ if } \ \dot{D}(\alpha)=0 \ \mbox{ and } \ c\le h(\alpha);
\end{array}
\right.
\end{equation}

\item if $\alpha=\gamma$ then
\begin{enumerate}[(i)]
\item if $c<c_{1.1}^*$ then $\phi\rq{}({\xi_\alpha^2}^+)= 0$;

\item if $c=c_{1.1}^*<h(\alpha)$, then
$  \phi\rq{}({\xi_\alpha^2}^+)=
\left[\frac{h(\alpha)-c_{1.1}^*}{{\dot{D}(\alpha)}}\right]_{-\infty}$;

\item if $c=h(\alpha)$ and $\dot{D}(\alpha)<0$ then $\phi\rq{}({\xi_\alpha^1}^-)=0$;

\item if $c<h(\alpha)$, then
  $\phi\rq{}({\xi_\alpha^1}^-)=
\left[\frac{h(\alpha)-c}{{\dot{D}(\alpha)}}\right]_{-\infty}$;
\end{enumerate}

\item
if $\alpha <\gamma$ then
\begin{enumerate}[(i)]
\item if $c< \min\{c_{1.1}^*, c_{3.1}^*\}$, then $\phi\rq{}({\xi_\alpha}^\pm)= \frac{ g(\alpha)}{s_+(c)}$;

\item if $c=c_{1.1}^*=c_{3.1}^*$, then
$\phi\rq{}({\xi_\alpha}^\pm)=
\left[\frac{g(\alpha)}{s_-(c)}\right]_{-\infty};$

\item if $c=c_{1.1}^*<c_{3.1}^*$, then
$\phi\rq{}({\xi_\alpha}^+) = \frac{g(\alpha)}{s_+(c)}>
\phi\rq{}({\xi_\alpha}^-)=
\left[\frac{g(\alpha)}{s_-(c)}\right]_{-\infty}$;

\item if $c=c_{3.1}^*<c_{1.1}^*$, then
$\phi\rq{}({\xi_\alpha}^-)= \frac{g(\alpha)}{s_+(c)} >
\phi\rq{}({\xi_\alpha}^+)=
\left[\frac{g(\alpha)}{s_-(c)}\right]_{-\infty}$.
\end{enumerate}

\end{enumerate}
\end{proposition}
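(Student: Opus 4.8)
\emph{Plan.} The starting point is the correspondence between profiles and solutions of the singular first-order problems of Section~\ref{s:reduction1}. By Proposition~\ref{prop:nec} and Corollary~\ref{cor:equivalence}, to a profile $\phi$ with speed $c$ there corresponds a solution $z$ of \eqref{e:problem0} (respectively of \eqref{e:problem0prime} when $\alpha<\gamma$ and the relevant regime is \eqref{integrability g}), and wherever $\phi\neq0,\alpha,1$ one has $\phi'(\xi)=z(\phi(\xi))/D(\phi(\xi))$, while $z(\alpha)=0$ and $D(\alpha)=0$. Since $\phi$ is decreasing, $\phi'(\xi_\alpha^-)=\lim_{\phi\to\alpha^+}z(\phi)/D(\phi)$ and $\phi'(\xi_\alpha^+)=\lim_{\phi\to\alpha^-}z(\phi)/D(\phi)$, with the analogous one-sided statements for $\xi_\alpha^1,\xi_\alpha^2$ when $\alpha=\gamma$. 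Because $D(\alpha)=0$ we have $D(\phi)/(\phi-\alpha)\to\dot D(\alpha)$, so the whole computation reduces to determining the one-sided limits $\lim_{\phi\to\alpha^\pm}z(\phi)/(\phi-\alpha)$, i.e.\ the one-sided slopes of $z$ at the singular point $\alpha$, and then dividing by $\dot D(\alpha)$ when $\dot D(\alpha)<0$, treating $\dot D(\alpha)=0$ separately.

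Next I would pin down the admissible slopes of $z$ at $\alpha$. Writing $q=Dg$, one has $q(\alpha)=0$ and, since $D(\alpha)=0$, the quotient $q(\phi)/(\phi-\alpha)=(D(\phi)/(\phi-\alpha))g(\phi)\to\dot D(\alpha)g(\alpha)$; thus $q$ is differentiable at $\alpha$ with $\dot q(\alpha)=\dot D(\alpha)g(\alpha)$. Inserting the ansatz $z(\phi)\sim s(\phi-\alpha)$ into $\dot z=h-c-q/z$ forces $s^2-(h(\alpha)-c)s+\dot D(\alpha)g(\alpha)=0$, whose roots are, up to the sign conventions built into the $\tilde{}$ and $\bar{}$ changes of variable, the quantities $s_\pm(c)$; when $\dot D(\alpha)=0$ --- in particular always when $\alpha=\gamma$, since then $g(\alpha)=0$ --- the quadratic degenerates to the roots $0$ and $h(\alpha)-c$. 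Which root is actually realized by $z$ as it leaves $\alpha$ from a given side is exactly the content of the threshold dichotomy for singular first-order equations in \cite{BCM1, BCM2, CdRM}: away from the critical speed of the relevant sub-problem the solution leaves the singular endpoint with one root, at the critical speed with the other, and when the degenerate root $0$ is selected together with $\dot D(\alpha)=0$ the quotient $z(\phi)/D(\phi)$ is an indeterminate $o(\phi-\alpha)/o(\phi-\alpha)$ which, by those same references, tends to $-\infty$; this accounts for the bracket notation $[\,\cdot\,]_{-\infty}$. Finally, using $s_+(c)s_-(c)=\dot D(\alpha)g(\alpha)$ one rewrites $s/\dot D(\alpha)=g(\alpha)/s'$, with $s'$ the complementary root, which is why the conclusions are phrased through $g(\alpha)/s_\pm(c)$.

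With these two ingredients the proof is a case analysis mirroring Section~\ref{s:first order}. For $\alpha>\gamma$ (part (a)), $\alpha$ is an interior singular point between $(\gamma,\alpha)$, where $q>0$ and $z<0$, and $(\alpha,1)$, where $q<0$ and $z>0$; since the admissible speed $c_1^*$ lies strictly between the relevant thresholds (Lemma~\ref{lem:problem1}, Proposition~\ref{prop:1}), $z$ approaches $\alpha$ from both sides with the root forced by the sign of $z$, yielding the common value $g(\alpha)/s_-(c)$ when $\dot D(\alpha)<0$ or $c>h(\alpha)$, and $-\infty$ otherwise. For $\alpha=\gamma$ (part (b)) one has $g(\alpha)=0$, hence $\dot q(\alpha)=0$ and the degenerate roots $0$ and $h(\alpha)-c$; the sub-problems on $(0,\alpha)$ and $(\alpha,1)$ meet only at the endpoint $\alpha=\gamma$, and the threshold $c_{1.1}^*$, together with $h(\alpha)$, decides on each side --- and according to the possible plateau at $\gamma$ --- whether $z$ leaves with slope $0$ (giving $0$, or $-\infty$ if $\dot D(\alpha)=0$) or with slope $h(\alpha)-c$ (giving $(h(\alpha)-c)/\dot D(\alpha)$ in the form $[\,\cdot\,]_{-\infty}$), via the slope results recalled around \cite[Proposition 8.2]{BCM1}. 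For $\alpha<\gamma$ (part (c)), $\alpha$ is interior between $(0,\alpha)$ ($q<0$, $z<0$) and $(\alpha,\gamma)$ ($q>0$, $z>0$), and the two thresholds $c_{1.1}^*$ and $c_{3.1}^*$ governing the two sides produce the four sub-cases exactly as in Proposition~\ref{prop:3}: below both thresholds the slope is the one giving $g(\alpha)/s_+(c)$ on each side, and when $c$ equals one (or both) of the thresholds the corresponding side switches to the complementary root $g(\alpha)/s_-(c)$, which collapses to $-\infty$ precisely when $\dot D(\alpha)=0$.

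The main obstacle is this last piece of bookkeeping: for each side and each sub-case one must correctly match the orientation of the relevant sub-problem --- including the effect of the $\tilde{}$ and $\bar{}$ transformations on the sign of $c$ and on which endpoint is singular --- with the precise threshold-versus-generic slope statements of \cite{BCM1, BCM2, CdRM}, and carefully keep track of when the resulting limit is finite versus $-\infty$. Once this matching is made, the stated formulas follow by the elementary algebra of the quadratic and the relation $\phi'=z/D$.
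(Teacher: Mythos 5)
Your overall route---pass to $z=D(\phi)\phi'$, write $\phi'=z/D$, linearize $\dot z=h-c-q/z$ at the common zero $z(\alpha)=q(\alpha)=0$ to get the quadratic $s^2-(h(\alpha)-c)s+\dot D(\alpha)g(\alpha)=0$ for the one-sided slopes of $z$, and let the threshold results of \cite{BCM1,BCM2,CdRM} decide which root is realized on each side---is the same one the paper follows; the paper simply quotes the profile-level statements directly (\cite[formula (2.9)]{BCM2} for part $(a)$, \cite[Theorem 2.3]{BCM1}, \cite[Theorem 2.3]{CdRM} and \cite[formula (7.2)]{BCM1} after explicit reflections for part $(b)$, and \cite[(2.16)]{BCM2} for part $(c)$). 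However, as written your proposal has a concrete error and a substantive omission.

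The error is in your selection rule for the $[\,\cdot\,]_{-\infty}$ entries. You claim the value $-\infty$ arises when the degenerate root $0$ is selected and $\dot D(\alpha)=0$, the quotient being an indeterminate $o(\phi-\alpha)/o(\phi-\alpha)$. This is backwards. In the sub-cases carrying the bracket (e.g.\ $(b.ii)$, $(b.iv)$, $(c.ii)$) the realized one-sided slope of $z$ is the \emph{nonzero} root $h(\alpha)-c$, so $z\sim(h(\alpha)-c)(\phi-\alpha)$ while $D=o(\phi-\alpha)$: a determinate nonzero-over-zero form. Conversely, when the zero root is realized and $\dot D(\alpha)=0$, the $0/0$ form can resolve to a \emph{finite} value: this is exactly sub-case $(c.i)$ (and part $(a)$ with $c>h(\alpha)$), where the statement asserts a finite slope with no bracket even if $\dot D(\alpha)=0$; indeed, if $\phi'(\xi_\alpha)$ is finite and $\dot z(\alpha^\pm)=0$, then \eqref{e:ODE} forces $(c-h(\alpha))\phi'(\xi_\alpha)+g(\alpha)=0$, i.e.\ $\phi'(\xi_\alpha)=g(\alpha)/(h(\alpha)-c)$. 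Your rule would therefore assign $-\infty$ to cases where the proposition asserts a finite derivative and vice versa. The omission is that the entire content of the proposition is precisely the side-by-side, threshold-by-threshold matching you defer as ``bookkeeping'': which of $c_{1.1}^*$, $c_{3.1}^*$, $h(\alpha)$ governs which one-sided limit and whether the generic or the critical root is taken there, together with the (non-formal) fact that $z$ actually admits one-sided derivatives at $\alpha$ equal to a root of the quadratic---which is the analytic content of the cited theorems, not a consequence of the ansatz $z\sim s(\phi-\alpha)$. Without carrying out that matching against the specific statements of \cite{BCM1,BCM2,CdRM}, none of the displayed formulas is established.
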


\begin{proof}
We prove $(a)$. In the interval $(\gamma , 1)$, $D$ changes sign, from positive to negative, and $g>0$. Then, we obtain \eqref{e:reg alpha 1} from \cite[formula (2.9)]{BCM2}.

We prove $(b)$. Let $\phi_{\alpha,0}$ be the restriction of $\phi$ to $(\xi_\alpha, \infty)$, for $\xi_\alpha:=\xi_\alpha^2$; hence, $\phi_{\alpha,0}\in(0,\alpha)$ and $D>0$, $g<0$ there. The function $\psi_{1,\bar\alpha}(\xi):=1-\phi_{\alpha,0}(-\xi)$, for $\xi \in (-\infty, -\xi_\alpha)$, satisfies
\begin{equation}
\label{e:eq psi}
\left(\bar D(\psi)\psi\rq{}\right)\rq{}+\left(-c-\tilde h(\psi)\right)\psi\rq{}+\tilde g(\psi)=0,
\end{equation}
with $\bar D>0$, $\tilde g>0$ in $(\bar\alpha, 1)$. Since $\tilde g(\bar\alpha)=\bar D(\bar\alpha)=0$, by \cite[Theorem 2.3]{BCM1} we deduce the behavior of $\psi_{1,\bar\alpha}$ near $-\xi_\alpha$, where $\psi(-\xi_\alpha^-)=\bar\alpha$; the threshold $c^*$ appearing there is replaced here by $-c_{1.1}^*$, since $-c_{1.1}^*=c^*(\tilde q; \tilde h; \bar\alpha,1)$. Hence, \cite[Theorem 2.3 (i)]{BCM1} implies that $\psi$ is classical at $\bar\alpha$ if $-c>-c_{1.1}^*$, i.e., $\psi\rq{}(-\xi_\alpha^-)=0$;
this proves $(b. i)$, since $\phi\rq{}(\xi)=\psi\rq{}(-\xi)$. Similarly, from \cite[Theorem 2.3 (ii)]{BCM1}, we have that if $-c=-c_{1.1}^*>\tilde h\left(\bar\alpha\right)$ then
\[ \psi\rq{}(-\xi_\alpha^-)=
\begin{cases}
\frac{\tilde h(\bar\alpha) +c_{1.1}^*}{\dot{\bar D}(\bar\alpha)} \ &\mbox{ if } \ \dot{\bar D}(\bar\alpha)>0,\\
-\infty \ &\mbox{ if } \ \dot{\bar D}(\bar\alpha)=0,
\end{cases}
\]
which gives $(b. ii)$. Consider the restriction $\phi_{1,\alpha}$ of $\phi$ to $(-\infty, \xi_\alpha^1)$.
Then $\phi_{1,\alpha}\in(\alpha,1)$, where $D<0$ and $g>0$.
Thus, the function $\psi_{\bar\alpha,0}(\xi):=1-\phi_{1,\alpha}(-\xi)$, for $\xi \in (-\xi_\alpha^1, \infty)$, is valued in $(0,\bar \alpha)$ and satisfies \eqref{e:ODE2}, with $\tilde D >0$, $\bar g>0$, in $(0,\bar \alpha)$ and $\tilde D(\bar \alpha)=\bar g(\bar \alpha)=0$.
We apply \cite[Theorem 2.3]{CdRM} to $\psi_{\bar\alpha,0}$; then $\psi\rq{}(\xi)\to 0$ as $\xi \to {-\xi_\alpha^1}^+$ if either $c>\bar h(\bar\alpha)$ or $c=\bar h (\bar\alpha)$ and $\dot{\tilde{D}}(\bar\alpha)<0$. This gives $(b. iii)$. Assume $c<\bar h(\bar\alpha)$ and consider $w(\psi):=\tilde D(\psi)\psi\rq{}$, for $\psi \in (0,\bar\alpha)$. Then $w<0$ in $(0,\bar\alpha)$, $w(\bar\alpha)=w(0)=0$ and $\dot w = \bar h -c - [\tilde D \bar g]/w$ in $(0,\bar\alpha)$, with $\dot {\{ \tilde D \bar g \}}\ \ (\alpha)=0$. Hence, \cite[formula  (7.2)]{BCM1} implies $\dot w(\bar \alpha)=\bar h(\bar \alpha)-c$, since $c<\bar h(\bar\alpha)$ by assumption. This gives
\[ \psi\rq{} ({-\xi_\alpha^1}^+)=
\begin{cases}
\frac{\bar h(\bar\alpha)-c}{\dot{\tilde D}(\bar\alpha)} \ &\mbox{ if } \ \dot{\tilde D}(\bar\alpha)<0,\\
-\infty \ &\mbox{ if } \ \dot{\tilde D}(\bar\alpha)=0,
\end{cases}
\]
from which $(b. iv)$ follows.
\par
We prove $(c)$. Let $\xi_\gamma$ be such that $\phi(\xi_\gamma)=\gamma$. Define $\phi_{\gamma,0}(\xi):=\phi(\xi)$ for $\xi \in (\xi_\gamma, \infty)$ and
$\psi_{1,\bar\gamma}(\xi) :=1-\phi_{\gamma,0}(-\xi)$ for $\xi \in (-\infty,-\xi_\gamma)$.
Part $(c)$ follows from \cite[(2.16)]{BCM2}, where $c_{n,r}^*$, $c_{p,l}^*$ there are replaced by (respectively) $-c_{3.1}^*$, $-c_{1.1}^*$. Indeed, $\psi_{1,\bar\gamma}$ satisfies \eqref{e:eq psi}, $\bar D$ satisfies \cite[${\rm (D_{np})}$]{BCM2} in $[\bar \gamma,1]$ for $\beta=\bar\alpha$, $\tilde g$ satisfies \cite[$(g)$]{BCM2} in $[\bar\gamma,1]$  and $\tilde h$ replaces $h$.
\end{proof}

As an example of degenerate situation, we now give conditions such that \eqref{e:ODE} has {\em only} profiles with infinity slope at $\alpha$, hence sharp at $\alpha$. We stress the role played by the change of convexity of $f$.

\begin{corollary}
\label{cor:slope alpha}
Assume $\alpha>\gamma$. Let $f$ be convex in $(\gamma,\alpha)$ and strictly concave both in $(0,\gamma)$ and $(\alpha,1)$. Moreover, assume $\dot{D}(\alpha)=0$, \eqref{sufficient condition1} and
\begin{equation}\label{e:cor alpha}
\dot f(\alpha) -\frac{f(\alpha)-f(\gamma)}{\alpha-\gamma}  \ge 2\sqrt{\sup_{(\gamma,\alpha]}\delta\left(Dg, \gamma\right)}.
\end{equation}
Then, the unique (up to shifts) profile $\phi$ of Equation \eqref{e:E} satisfies
$\phi\rq{}({\xi_\alpha}^\pm)=-\infty$.
\end{corollary}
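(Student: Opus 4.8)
The plan is to reduce the statement to Proposition \ref{prop:reg alpha2} $(a)$: when $\alpha>\gamma$ and $\dot D(\alpha)=0$, that proposition yields $\phi\rq{}(\xi_\alpha^\pm)=-\infty$ precisely when the wave speed $c$ satisfies $c\le h(\alpha)$. Since $\dot D(\alpha)=0$ is assumed and, by Theorem \ref{th:multiplicity} $(a)$, every wavefront here is unique up to shifts and carries the single speed $c=c_1^*$ of Lemma \ref{lem:problem1}, the proof splits into two tasks: first, show that a wavefront exists (so that $c_1^*$ is well defined and the uniqueness statement applies); second, show $c_1^*\le h(\alpha)$.

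For existence I would mimic the proof of Theorem \ref{th:sufficient conditions} $(a)$, the only modification being that the hypothesis $\dot g(\gamma)>0$ used there is replaced by the convexity of $f$ on $(\gamma,\alpha)$. Concretely, by Proposition \ref{prop:suff} $(a.i)$ and Proposition \ref{prop:1} it suffices to check $c_{1.1}^*<c_{1.2}^*$ and $c_1^*\ge c_{3.2}^*$. Applying \eqref{e:c*above} to $c_{1.1}^*$ and $c_{3.2}^*$ (through the substitutions \eqref{changes}) and using the strict concavity of $f$ on $(0,\gamma)$ and on $(\alpha,1)$ to identify the relevant suprema of $\delta(f,\cdot)$ with $\dot f(\gamma)$ and $\frac{f(1)-f(\alpha)}{1-\alpha}$, assumption \eqref{sufficient condition1} becomes exactly $c_{1.1}^*\ge c_{3.2}^*$, hence $c_1^*>c_{1.1}^*\ge c_{3.2}^*$. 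For the inequality $c_{1.1}^*<c_{1.2}^*$, the convexity of $f$ on $(\gamma,\alpha)$ makes $h=\dot f$ non-decreasing there, so $h(\phi)\ge h(\gamma)$ in a right neighborhood of $\gamma$; then \eqref{e:c*below2} gives $c_{1.2}^*=c^*(q;h;\gamma,\alpha)>h(\gamma)$, while \eqref{e:c*below} gives $c_{1.1}^*\le h(\gamma)$, so indeed $c_{1.1}^*\le h(\gamma)<c_{1.2}^*$.

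For the bound $c_1^*\le h(\alpha)$ I would exploit \eqref{e:cor alpha}. Since $f$ is convex on $(\gamma,\alpha)$, one has $\sup_{(\gamma,\alpha]}\delta(f,\gamma)=\frac{f(\alpha)-f(\gamma)}{\alpha-\gamma}$, so \eqref{e:cor alpha} reads $\frac{f(\alpha)-f(\gamma)}{\alpha-\gamma}+2\sqrt{\sup_{(\gamma,\alpha]}\delta(Dg,\gamma)}\le\dot f(\alpha)=h(\alpha)$. On the other hand, \eqref{e:c*above} applied to $c_{1.2}^*=c^*(q;h;\gamma,\alpha)$ (with $q=Dg$) gives $c_{1.2}^*\le\frac{f(\alpha)-f(\gamma)}{\alpha-\gamma}+2\sqrt{\sup_{(\gamma,\alpha]}\delta(Dg,\gamma)}$. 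Combining the two, $c_1^*<c_{1.2}^*\le h(\alpha)$. With the wavefront $\phi$ now at hand, $\dot D(\alpha)=0$ and $c=c_1^*<h(\alpha)$, Proposition \ref{prop:reg alpha2} $(a)$ immediately gives $\phi\rq{}(\xi_\alpha^\pm)=-\infty$, which is the assertion.

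The conceptual crux is the observation in the previous paragraph that \eqref{e:cor alpha} is, after using convexity of $f$ on $(\gamma,\alpha)$ and \eqref{e:c*above}, nothing but the estimate $c_{1.2}^*\le h(\alpha)$; everything else is a rerun of the arguments in the proofs of Theorems \ref{thm:f convex} and \ref{th:sufficient conditions}. The only point requiring care is the bookkeeping that matches the transformed thresholds in \eqref{e:soglie1} with the geometric quantities in \eqref{sufficient condition1} and \eqref{e:cor alpha} under the changes of variables \eqref{changes}, but this is exactly the computation already performed there, so no essential new obstacle arises.
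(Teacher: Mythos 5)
Your proposal is correct and follows essentially the same route as the paper's proof: existence of the wavefront is obtained by checking $c_{1.1}^*\le h(\gamma)<c_{1.2}^*$ (via \eqref{e:c*below}--\eqref{e:c*below2} and convexity of $f$ on $(\gamma,\alpha)$) and $c_1^*>c_{1.1}^*\ge c_{3.2}^*$ (via \eqref{e:c*above}, strict concavity on the outer intervals, and \eqref{sufficient condition1}), then \eqref{e:cor alpha} is read, exactly as in the paper, as the estimate $c_{1.2}^*\le h(\alpha)$ through $\sup_{(\gamma,\alpha]}\delta(f,\gamma)=\frac{f(\alpha)-f(\gamma)}{\alpha-\gamma}$ and \eqref{e:c*above}, so that $c_1^*<h(\alpha)$ and Proposition \ref{prop:reg alpha2} $(a)$ with $\dot D(\alpha)=0$ gives the conclusion. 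No gaps.
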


\begin{proof}
First, we show that \eqref{e:E} has solutions, and then that the solution (unique from Theorem \ref{th:multiplicity} $(a)$) corresponds to some $c \le h(\alpha)$. We then conclude by Proposition \ref{prop:reg alpha2} $(a)$.

About the existence, since $f$ is convex in $(\gamma,\alpha)$ then it follows from \eqref{e:soglie1} and \eqref{e:c*below}-\eqref{e:c*below2} that $c_{1.1}^*< h(\gamma) \le c_{1.2}^*$ (as observed in the proof of Theorem \ref{thm:f convex}). So $c_1^* \in (c_{1.1}^*,c_{1.2}^*)$ exists. Since $f$ is strictly concave in $(0,\gamma)$, $(\alpha, 1)$, and \eqref{sufficient condition1} holds, by arguing as in the proof of Theorem \ref{th:sufficient conditions} $(a)$ we deduce $c_{1.1}^* \ge c_{3.2}^*$, which implies $c_1^* >c_{3.2}^*$. By applying Corollary \ref{cor:equivalence} $(a)$ and Proposition \ref{prop:1}, Equation \eqref{e:E} admits a profile associated to $c_1^*$.

To prove $c_1^* \le h(\alpha)$, we notice that \eqref{e:cor alpha} becomes, under the assumptions on $f$,
\[ \sup_{(\gamma,\alpha]}\delta\left(f,\gamma\right)+2\sqrt{\sup_{(\gamma,\alpha]}\delta\left(Dg,\gamma\right)} \le \dot f(\alpha)=h(\alpha). \]
By means of \eqref{e:soglie1} and \eqref{e:c*above}, this implies $c_{1.2}^* \le h(\alpha)$ and hence $c_1^*< h(\alpha)$.
\end{proof}

\begin{remark}
{
\rm
Proposition \ref{prop:reg alpha2} can give {\em a posteriori} informations on the value of thresholds, as the following two explicit examples show.

Consider Example \ref{other2}, where $\alpha<\gamma$; we constructed a profile $\phi$ with $\phi\rq{}({\xi_\alpha}^\pm)=-\infty$ and $c=0$. By Proposition \ref{prop:reg alpha2} $(c)$, this can occur only if $c=c_{1.1}^*=c_{3.1}^*$. Then $c_{1.1}^*=c_{3.1}^*=0$.

The profile $\phi$ in Example \ref{example1} has $c=0$ and satisfies $\phi\rq{}({\xi_\alpha^1}^-)=\phi\rq{}({\xi_\alpha^2}^+)<0$. Moreover, $h(\alpha)>0$. In the case $\alpha=\gamma$ we deduce $c=c_{1.1}^*$ by Proposition \ref{prop:reg alpha2} $(b)$; hence $c_{1.1}^*=0$.
}
\end{remark}

\section*{Acknowledgments}
The authors are members of the {\em Gruppo Nazionale per l'Analisi Matematica, la Probabilit\`{a} e le loro Applicazioni} (GNAMPA) of the {\em Istituto Nazionale di Alta Matematica} (INdAM) and acknowledge financial support from this institution.

%%%%%%%%%%%%%%%%%%%%%%%%%%%%%%%%%%%%%%%%%%%%%%%%%
{\small
\bibliography{refe_BCM3}
\bibliographystyle{abbrv2}
}

\end{document}